\numberwithin{equation}{section}
\setlist[enumerate]{itemsep=0pt,label=$(\mathrm{\roman*})$, topsep=5pt}
\setlist[itemize]{itemsep=0pt, topsep=5pt, labelindent=\parindent,leftmargin=*}
\setlist[description]{itemsep=0pt, topsep=5pt, leftmargin=*}
\newtheorem{thm}{Theorem}[section]
\newaliascnt{cor}{thm}
\newtheorem{cor}[cor]{Corollary}
\newaliascnt{lem}{thm}
\newtheorem{lem}[lem]{Lemma}
\newaliascnt{prop}{thm}
\newtheorem{prop}[prop]{Proposition}
\theoremstyle{definition}
\newaliascnt{dfn}{thm}
\newtheorem{dfn}[dfn]{Definition}
\newaliascnt{rem}{thm}
\newtheorem{rem}[rem]{Remark}
\newaliascnt{ex}{thm}
\newtheorem{ex}[ex]{Example}
\newtheorem{claim}{Claim}
\newtheorem*{oneclaim}{Claim}
\newcommand{\ab}{\mathrm{ab}}
\newcommand{\A}{\mathscr{A}}
\newcommand{\Ahat}{\widehat{A}}
\newcommand{\Abar}{\ol{A}}
\DeclareMathOperator{\Alb}{Alb}
\DeclareMathOperator{\Aut}{Aut}
\newcommand{\Cor}{\operatorname{Cor}}
\newcommand{\Cf}{\textit{cf.}\;}
\DeclareMathOperator{\Coker}{Coker}
\DeclareMathOperator{\ck}{Coker}
\renewcommand{\div}{\mathrm{div}}
\DeclareMathOperator{\CH}{CH_0}
\renewcommand{\d}{\partial}
\newcommand{\dX}{\d_X}
\newcommand{\dJ}{\d_J}
\newcommand{\dA}{\d_A}
\newcommand{\dE}{\d_E}
\newcommand{\ds}{\displaystyle}
\newcommand{\E}{\mathscr{E}}
\newcommand{\Ehat}{\widehat{E}}
\newcommand{\Ebar}{\ol{E}}
\newcommand{\et}{\mathrm{et}}
\DeclareMathOperator{\End}{End}
\newcommand{\F}{\mathbb{F}}
\newcommand{\Fk}{\F_k}
\newcommand{\FK}{\F_K}
\newcommand{\fin}{\mathrm{fin}}
\newcommand{\geo}{\mathrm{geo}}
\DeclareMathOperator{\Gal}{Gal}
\newcommand{\Gm}{\mathbb{G}_{m}}
\DeclareMathOperator{\Hom}{Hom}
\renewcommand{\H}{\mathscr{H}}
\newcommand{\isomto}{\xrightarrow{\simeq}}
\renewcommand{\Im}{\operatorname{Im}}
\newcommand{\img}{\operatorname{Im}}
\newcommand{\inj}{\hookrightarrow}
\newcommand{\ilim}{\varinjlim}
\newcommand{\Id}{\operatorname{id}}
\newcommand{\Ie}{\emph{i.e.},\ }
\DeclareMathOperator{\Jac}{Jac}
\newcommand{\Jbar}{\ol{J}}
\newcommand{\Jhat}{\wh{J}}
\newcommand{\J}{\mathscr{J}}
\DeclareMathOperator{\Ker}{Ker}
\renewcommand{\ker}{\operatorname{Ker}}
\newcommand{\Kt}{K^{\times}}
\newcommand{\kt}{k^{\times}}
\newcommand{\kbar}{\ol{k}}
\newcommand{\kur}{{k^{\ur}}}
\newcommand{\kX}{k(X)}
\newcommand{\kV}{k(V)}
\newcommand{\m}{\mathfrak{m}}
\newcommand{\mK}{\m_K}
\newcommand{\Mur}{M^{\ur}}
\newcommand{\M}{\mathscr{M}}
\newcommand{\N}{\mathscr{N}}
\newcommand{\Nhat}{\widehat{N}}
\newcommand{\ol}[1]{\overline{#1}}
\renewcommand{\O}{\mathcal{O}}
\newcommand{\OK}{\O_K}
\newcommand{\Ok}{\O_k}
\newcommand{\OKt}{\OK^{\times}}
\newcommand{\onto}[1]{\stackrel{#1}{\to}}
\newcommand{\otimesZ}{\otimes_{\Z}}
\newcommand{\otimesM}{\otimes}
\newcommand{\PF}{$(\mathbf{PF})\,$}
\newcommand{\piab}{\pi_1^{\ab}}
\newcommand{\piabX}{\piab(X)}
\newcommand{\piabV}{\piab(V)}
\newcommand{\piabXbar}{\piab(\Xbar)}
\newcommand{\piabVbar}{\piab(\Vbar)}
\newcommand{\piabXgeo}{\piabX^{\geo}}
\newcommand{\piabVgeo}{\piabV^{\geo}}
\newcommand{\piabXbargeo}{\piab(\Xbar)^{\geo}}
\newcommand{\piabVram}{\piabV_{\ram}}
\newcommand{\piabXgeoram}{\piabXgeo_{\ram}}
\newcommand{\piabVgeoram}{\piabVgeo_{\ram}}
\newcommand{\plim}{\varprojlim}
\newcommand{\Q}{\mathbb{Q}}
\newcommand{\Qp}{\Q_p}
\newcommand{\QZ}{\Q/\Z}
\newcommand{\Res}{\operatorname{Res}}
\newcommand{\ram}{\mathrm{ram}}
\newcommand{\sn}{\smallskip\noindent}
\renewcommand{\sp}{\operatorname{sp}}
\newcommand{\surj}{\twoheadrightarrow}
\newcommand{\ssm}{\smallsetminus}
\newcommand{\Spec}{\operatorname{Spec}}
\newcommand{\SKX}{SK_1(X)}
\newcommand{\tor}{\mathrm{tor}}
\newcommand{\Tor}{\operatorname{Tor}}
\newcommand{\Ubar}{\ol{U}}
\newcommand{\ur}{\mathrm{ur}}
\newcommand{\urab}{\ur,\ab}
\newcommand{\V}{\mathscr{V}}
\newcommand{\Vbar}{\ol{V}}
\newcommand{\VX}{V(X)}
\newcommand{\VXdiv}{\VX_{\div}}
\newcommand{\VXfin}{\VX_{\fin}}
\newcommand{\VE}{V(E)}
\newcommand{\W}{\mathscr{W}}
\newcommand{\Wbar}{\ol{W}}
\newcommand{\wt}[1]{\widetilde{#1}}
\newcommand{\wh}[1]{\widehat{#1}}
\newcommand{\X}{\mathscr{X}}
\newcommand{\Xbar}{\ol{X}}
\newcommand{\xbar}{\ol{x}}
\newcommand{\Z}{\mathbb{Z}}
\newcommand{\Zp}{\Z_p}
\title[Abelian geometric fundamental groups]{Abelian geometric fundamental groups for curves over a $p$-adic field}
\author[E. Gazaki]{Evangelia Gazaki} \address[E. Gazaki]{Department of Mathematics, University of Virginia, 221 Kerchof Hall, 141 Cabell Dr., Charlottesville, VA, 22904, USA}
\email{eg4va@virginia.edu}
\author[T. Hiranouchi]{Toshiro Hiranouchi}\address[T. Hiranouchi]{Department of Basic Sciences, Graduate School of Engineering, 
Kyushu Institute of Technology, 
1-1 Sensui-cho, Tobata-ku, Kitakyushu-shi, 
Fukuoka 804-8550 JAPAN}
\email{hira@mns.kyutech.ac.jp}
\keywords{Class field theory, Fundamental groups, and Elliptic curves}
\begin{document}
\pagenumbering{arabic}
\maketitle

\begin{abstract}
For a curve $X$ over a $p$-adic field $k$, 
using the class field theory of $X$ due to 
 S.~Bloch and S.~Saito 
we study 
the abelian 
geometric fundamental group $\piabXgeo$ of $X$. 
In particular, it is investigated a subgroup of $\piabXgeo$ 
which classifies 
the geometric and abelian coverings of $X$ 
which allow possible ramification over the special fiber 
of the model of $X$.  
Under the assumptions that $X$ has a $k$-rational point,  $X$ has good reduction and
 its Jacobian variety has good ordinary reduction,   
we give 
some upper and lower bounds 
of this subgroup of $\piabXgeo$. 
\end{abstract}

% 11G07: Elliptic curves over local fields 
% 11R37: Class field theory

\section{Introduction}
Let $k$ be a \textbf{$p$-adic field}, that is, a finite extension of $\Qp$, with residue field $\Fk$. 
In this note, we investigate the 
abelian fundamental group $\piabX$ 
for a projective smooth and geometrically connected
curve $X$ over $k$.  
The structure map $X\to \Spec(k)$ induces 
the short exact sequence 
\begin{equation}
	\label{seq:geo}
	0 \to \piabXgeo \to \piabX \to G_k^{\ab} = \piab(\Spec(k)) \to 0, 
\end{equation}
where $\piabXgeo$ is defined by the exactness 
and here we call this the \textbf{geometric fundamental group} of $X$.  
Local class field theory describes $G_k^{\ab}$ sufficiently 
so that our interest is in $\piabXgeo$. 
Now, we restrict our attention to 
the case where 
$X$ has \textbf{good reduction} 
in the sense 
that the special fiber $\Xbar := \X\otimes_{\Ok}\Fk$ 
of the regular model $\X$ over $\Ok$ of $X$ 
is a smooth curve over $\Fk$ 
and also 
$X$ has a $k$-rational point. 
The short exact sequence \eqref{seq:geo} splits. 
There is a map called the \textbf{specialization map}  
$\piab(X)\xrightarrow{\sp}\piab(\Xbar)$ (\Cf \eqref{eq:sp}) 
and this induces 
\begin{equation}
	\label{seq:ram}
	0 \to \piabXgeoram \to \piabXgeo \xrightarrow{\sp} \piabXbargeo \to 0, 
\end{equation}
where $\piabXbargeo := \Ker(\piabXbar\to G_{\Fk})$ is 
the (abelian) geometric fundamental group of $\Xbar$ and 
$\piabXgeoram$ is defined by the exactness again.  
%We call this $\piabXgeoram$ the \textbf{ramified part} of the geometric fundamental group 
%$\piabXgeo$. 
The fundamental group $\piabXgeoram$ classifies 
the geometric (abelian) coverings of $X$ which are \emph{completely ramified} 
over the special fiber $\Xbar$ (for the precise description and definition, see \autoref{sec:Galois}). 
The classical class field theory (for the curve $\Xbar$ over the finite field $\Fk$) 
says that the reciprocity map induces an isomorphism 
$\rho_{\Xbar} \colon \Jbar \isomto \piabXbar^{\geo}$, 
where $\Jbar = \Jac(\Xbar)$ is the Jacobian variety of $\Xbar$. 
Our main result describes the structure of 
the remaining part $\piabXgeoram$ 
by using an invariant concerning the Jacobian variety $J = \Jac(X)$ of $X$. 

 %%%% main theorem 
\begin{thm}[\Cf \autoref{cor:main}]
\label{thm:main_intro}
	Let $X$ be a projective smooth curve over $k$ with $X(k)\neq \emptyset$, 
	and $J = \Jac(X)$ the Jacobian variety of $X$. 
	We assume that $X$ has good reduction, 
	and the Jacobian variety $\Jbar = \Jac(\Xbar)$ of $\Xbar$ is an ordinary abelian variety. 
	Then, we have surjective homomorphisms 
	\[
	(\Z/p^{\Mur})^{\oplus g} \surj \piabX^{\geo}_{\ram} \surj (\Z/p^{N_J})^{\oplus g}, 
	\]
	where
	$N_J = \max\set{n | J[p^n]\subset J(k)}, M^{\ur} = \max \set{m | \mu_{p^m} \subset k^{\ur}}$,
	and $g = \dim J$.
	Here, we denoted by $k^{\ur}$ the maximal unramified extension of $k$ 
	and $\mu_{p^m}$ is the group of $p^m$-th roots of unity.
\end{thm}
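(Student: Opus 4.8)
The plan is to sidestep the idele-theoretic formalism and compute $\piabXgeoram$ directly as a Galois coinvariant of the $p$-adic Tate module, exploiting that $\piabXgeoram$ is \emph{defined} as the kernel of $\sp$ in \eqref{seq:ram}. First I would use that $X$ is a smooth projective curve to identify, via Hochschild--Serre applied to $1\to\pi_1(\Xbar_{\kbar})\to\pi_1(X)\to G_k\to 1$, the group $\piabXgeo$ with the $G_k$-coinvariants $(TJ)_{G_k}$ of the full Tate module $TJ=\prod_\ell T_\ell J$, under which $\sp$ is induced by the reduction $TJ\to T\Jbar$ (the target $\piabXbargeo$ being given the meaning of $\rho_{\Xbar}\colon\Jbar\isomto\piabXbargeo$). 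For every $\ell\neq p$ good reduction makes $T_\ell J\to T_\ell\Jbar$ a $G_k$-isomorphism onto an unramified module, so $\sp$ is an isomorphism on $\ell$-parts; hence $\piabXgeoram$ is pro-$p$ and I may work entirely with $T_pJ$.

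Second, I would feed in that $\Jbar$ is ordinary. The connected--\'etale sequence of the $p$-divisible group of the good model over $\Ok$ gives a short exact sequence of $G_k$-modules
\[
0 \to T_p^0 \to T_pJ \to T_p^{\et} \to 0,
\]
where $T_p^{\et}\cong T_p\Jbar$ is unramified and free of rank $g$, and ordinariness forces the connected part $T_p^0$ to be free of rank $g$ with inertia $I_k$ acting through the cyclotomic character; concretely $T_p^0\cong\Zp(1)\otimes_{\Zp}W$ with $W$ an unramified $\Zp[G_k]$-module free of rank $g$. Since coinvariants are right exact and $(T_p^{\et})_{G_k}=\piabXbargeo$, the ramified part is exactly
\[
\piabXgeoram \;=\; \img\bigl((T_p^0)_{G_k}\to (T_pJ)_{G_k}\bigr),
\]
a quotient of $(T_p^0)_{G_k}$, so the whole problem reduces to squeezing $(T_p^0)_{G_k}$.

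For the upper bound I would pass first to inertia coinvariants: as $W$ is unramified and $I_k$ acts on $\Zp(1)$ by $\chi$, one has $(T_p^0)_{I_k}\cong(\Zp(1))_{I_k}\otimes W$, and $(\Zp(1))_{I_k}=\Zp/(\chi(\sigma)-1:\sigma\in I_k)$ is cyclic of order $p^{\Mur}$, precisely because $\mu_{p^m}\subset\kur$ is equivalent to $I_k$ acting trivially on $\mu_{p^m}$, i.e. to $\chi|_{I_k}\equiv1\pmod{p^m}$. Thus $(T_p^0)_{I_k}\cong W/p^{\Mur}W\cong(\Z/p^{\Mur})^{\oplus g}$, and since $(T_p^0)_{G_k}$ is a further Frobenius quotient, the composite $(\Z/p^{\Mur})^{\oplus g}\surj(T_p^0)_{G_k}\surj\piabXgeoram$ gives the first surjection. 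For the lower bound I would reduce the connected--\'etale sequence modulo $p^{N_J}$: freeness of $T_p^{\et}$ makes $T_p^0/p^{N_J}\inj J[p^{N_J}]$, and the hypothesis $J[p^{N_J}]\subset J(k)$ forces trivial $G_k$-action on this submodule, so $T_p^0/p^{N_J}\cong(\Z/p^{N_J})^{\oplus g}$. Right exactness yields $(T_p^0)_{G_k}\surj(T_p^0/p^{N_J})_{G_k}=T_p^0/p^{N_J}$; composing $\piabXgeoram\subset(T_pJ)_{G_k}$ with the reduction $(T_pJ)_{G_k}\to(T_pJ/p^{N_J})_{G_k}=J[p^{N_J}]$ and tracing the image of $(T_p^0)_{G_k}$ shows this surjection factors through $\piabXgeoram$, giving $\piabXgeoram\surj(\Z/p^{N_J})^{\oplus g}$.

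The step I expect to be the true crux is the Galois-module input of the second paragraph: pinning down that for ordinary good reduction $T_p^0$ is free of rank $g$ with inertia acting exactly by $\chi$ up to an unramified twist, together with the identification of $\sp$ with the reduction $T_pJ\to T_p^{\et}$ along the connected--\'etale filtration. Everything else is formal coinvariant bookkeeping, but it is this structural description of the ordinary $p$-divisible group that makes both $\Mur$ and $N_J$ appear with the correct multiplicity $g$, and it is exactly where the ordinariness hypothesis is indispensable.
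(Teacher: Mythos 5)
Your proposal is correct, but it takes a genuinely different route from the paper. The paper never computes $\piabXgeoram$ as a Tate-module coinvariant directly: it invokes the Bloch--Saito class field theory to identify $\piabXgeoram \simeq \Ker(\dX)_{\fin}$ (\Cf \eqref{eq:mu}), transports this kernel to the Somekawa $K$-group via $\VX\simeq K(k;J,\Gm)$, and then proves both bounds for the boundary map of an \emph{arbitrary} abelian variety $A$ with good ordinary reduction (\autoref{thm:main}): the upper bound via the Mackey product $(\Ahat\otimes\Gm)(k)$ and a Hilbert-symbol computation (\autoref{thm:GmA}), the lower bound via results of \cite{Hir21} on $(\Ahat\otimes\Gm)(k)/p^{N}$. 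You replace all of this by the homological five-term sequence identification $\piabXgeo\simeq (TJ)_{G_k}$, the identification of $\sp$ with reduction of Tate modules, and coinvariant calculus on the connected--\'etale sequence using $T_p^{0}\simeq\Zp(1)\otimes W$; this is essentially the viewpoint the paper adopts only later, in \autoref{bloch} and \autoref{sec:EC} (\autoref{lem:Tcoinv} is exactly your computation $(\Zp(1))_{I_k}\simeq\Z/p^{\Mur}$ in rank one, and \autoref{prop:upper_E} is your image description for elliptic curves), except that there the coinvariant description is deduced from injectivity of Galois symbol maps --- hence again from class field theory --- whereas you get it from group homology alone. What your route buys: a more elementary, self-contained proof of \autoref{thm:main_intro} avoiding $SK_1$, Somekawa $K$-groups and Mackey products, and even the sharper intrinsic bound $(T_p(\J)^{\circ})_{G_k}\surj\piabXgeoram$. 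What the paper's route buys: generality --- \autoref{thm:main} applies to any abelian variety, where no fundamental-group interpretation exists and the injectivity needed for \autoref{bloch} is unknown, which is what yields the remark that only $J$ (not $X$) needs good reduction and the corollary for products of curves. Two points you must make explicit: first, the isomorphism $(TJ)_{G_k}\isomto\piabXgeo$ requires the splitting of \eqref{seq:geo} coming from $X(k)\neq\emptyset$; without a section the five-term sequence only gives a surjection $(TJ)_{G_k}\surj\piabXgeo$, and your lower-bound diagram chase, which reduces $\piabXgeoram\subset (T_pJ)_{G_k}$ modulo $p^{N_J}$, genuinely needs the injectivity. Second, the compatibility of $\sp$ with $TJ\to T\Jbar$ (which you rightly flag as the crux) and the identification $(T_p\Jbar)_{G_{\Fk}}\simeq\Jbar(\Fk)\{p\}$ via Lang's theorem both need proof or citation.
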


\begin{rem}
\label{rem:M}
	Put $M = \max \set{m | \mu_{p^m} \subset k}$. 
	In general, 
	we have inequalities $N_J\le M \le M^{\ur}$.
	Here, the first inequality follows from the Weil pairing.  
	For the later inequality $M\le M^{\ur}$, 
	%we have 
	%$M= M^{\ur}$ if and only if  $k(\mu_{p^{M+1}})/k$ 
	%is ramified. 
	if we assume $\mu_p\subset k$, that is, $M\ge 1$ 
	and put $e_0(k)= e_k/(p-1)$, 
	where $e_k$ is the absolute ramification index of $k$, 
	then 
	$M= \Mur$ if and only if  
	$\zeta_{p^M} \not \in \Im\left(U^{pe_0(k)}_k\inj k^{\times} \surj  k^{\times}/(\kt)^p \right)$, 
	where 
	$\zeta_{p^M}$ is a primitive $p^M$-th root of unity, 
	$U_k^{pe_0(k)}$ 
	is the higher unit group (see e.g., \cite[Lemma 2.1.5]{Kaw02}).
	For example, when the base field $k$ is of the form $k = k_0(\zeta_{p^m})$ 
	for some finite unramified extension $k_0/\Qp$, 
	we have $M = M^{\ur} = m$.
	%In fact, $k_0/\Qp$ is unramified, so that there exists $n\in \Z_{>0}$ which is prime to $p$ such that $k_0 = \Qp(\zeta_n)$ (\cite{Ser68}, Section IV.4, Proposition 16). 
	%The extension $k_0$ and $\Qp(\zeta_p^m)$ are linearly disjoint, 
	If we additionally assume $N_J = M$ as we considered in \cite{Hir21} 
	(we also give some elliptic curves satisfying this condition in \autoref{sec:EC}), 
	then the exact sequence \eqref{seq:ram} splits and 
	we have $\piabXgeoram \simeq (\Z/p^m)^{\oplus g}$. 
	One can recover the main theorem in \cite{Hir21}.
	%For example, the base field $k$ is of the form $k = k_0(\zeta_{p^m})$ 
	%for some finite unramified extension $k_0/\Qp$, 
	%we have $m = N_J = M = M^{\ur}$. 
	%In fact, $k_0/\Qp$ is unramified, so that there exists $n\in \Z_{>0}$ which is prime to $p$ such that $k_0 = \Qp(\zeta_n)$ (\cite{Ser68}, Section IV.4, Proposition 16). 
	%
	%$\zeta_{p^{m+1}} \not \in k_0$
	%There exist infinitely many elliptic curves $E$ satisfying 
	%$N_E < M$. 
\end{rem}

The above theorem enables us to construct  
an abelian geometric covering $\wt{X}\to X$ 
corresponding to $\piabXgeoram$ (\autoref{explicitetale})
along the context of the geometric abelian class field theory (\emph{e.g.}, \cite{Ser88}). 
This can be regarded as an analogue of Yoshida's work 
on the modular curve $X_0(p)$ over $\Qp$ (\cite{Yos02}). 
In \autoref{sec:EC}, we give examples in dimension $1$, 
that is when $X=E$ is an elliptic curve with good ordinary reduction, 
to indicate that each one of the two bounds given in \autoref{thm:main_intro} can be achieved 
depending on the $\Gal(\overline{k}/k)$-action on the Tate module of $X$ (\Cf \autoref{thm:2.9}). 
This in particular shows that \autoref{thm:main_intro} is as general as it can be. 
We also consider an elliptic curve $X=E$ over $k$ with good \emph{supersingular reduction}  
and we give bounds for $\piabXgeoram$ of similar flavor as in \autoref{thm:main_intro}.

\subsection*{Notation}
Throughout this note, we use the following notation: 
%Let $L/K$ be a Galois extension with $G = \Gal(L/K)$, 
%and $M$ a topological abelian group equipped with a $\Z$-linear action of $G$. 
%Then, for each $i\in\Z_{\ge 0}$, we denote by 
%$H^i(L/K, M) = H^i_{\mathrm{cont}}(G,M)$ 
%the $i$-th continuous Galois cohomology group. 
We fix  
 a finite extension $k$ of $\Qp$. 
For a finite extension $K/k$, we define 
\begin{itemize}
	\item $O_K$: the valuation ring of $K$ with maximal ideal $\m_K$, 
	\item $\FK = \OK/\mK$: the residue field of $K$,
	\item $G_K := \Gal(\kbar/K)$: the absolute Galois group of $K$, and 
	\item $U_K = \OK^{\times}$: the unit group of $\OK$. 
\end{itemize}
For an abelian group $G$ and $m\in \Z_{\ge 1}$,  
we write $G[m]$ and $G/m$ for the kernel and cokernel 
of the multiplication by $m$ on $G$ respectively. 
We also denote by $G\{m\} := \bigcup_{n\ge 1}G[m^n]$ the $m$-primary part of $G$. 
For a profinite group $G$, 
and a $G$-module $M$, we denote by $M^G \subset M$ and $M\surj M_G$ 
its $G$-invariant subgroup and $G$-coinvariant quotient, respectively.
In this note, by a \textbf{varitey} over $k$ we mean an integral and separated scheme of finite type over $k$,  
and a \textbf{curve} over $k$ is a variety over $k$ with dimension $1$.

\subsection*{Acknowledgements} The first author was partially supported by the NSF grant DMS-2001605. 
The second author was supported by JSPS KAKENHI Grant Number 20K03536.
The authors 
would like to give heartful thanks to Prof.~Yoshiyasu Ozeki who allowed us to include his result (\autoref{prop:ozeki}). 
We would also like to thank Prof.~Takao Yamazaki whose comments on the construction of the maximal covering 
in \autoref{sec:max} were an enormous help to us.

%%%%%%%%%%%%%%%%%%%%%%%%
\section{Preliminaries}
\label{sec:Galois}

\subsection*{Finite by divisible}
Following \cite{RS00}, 
we introduce the following notation: 
\begin{dfn}[{\cite[Lemma 3.4.4]{RS00}}]
\label{def:fin-by-div}
	An abelian group $G$ is said to be \textbf{finite by divisible} 
	if $G$ has a decomposition $G\simeq F\oplus D$ 
	for a finite group $F$ and a divisible group $D$. 
	In what follows, we often denote by $G_{\fin}$ and $G_{\div}$ the subgroups of $G$ isomorphic to $F$ and $D$ respectively.
\end{dfn}

\begin{lem}[{\cite[Lemma 3.4.4]{RS00}}]
\label{lem:RS3.4.4}
	\begin{enumerate}
	\item Let $G$ be an abelian group. 
	Then, 
	$G$ is finite by divisible if and only if 
	$\ds\plim_{m\ge 1}G/m$ is finite.
	The last condition holds 
	if $G/m$ is finite for any $m\ge 1$,  
	and its order is bounded independently of $m$.
	\item 
	If $G\to G'$ is a surjective homomorphism of abelian groups, and 
	if $G$ is finite by divisible, then so is $G'$.
	\item 
	Suppose that there is a short exact sequence $0 \to G'' \to G \to G' \to 0$ 
	of abelian groups. 
	If $G$ is finite by divisible, and $G'$ is finite, 
	then $G''$ is also finite by divisible. 
	\end{enumerate}
\end{lem}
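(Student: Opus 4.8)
The plan is to establish (1) first and then derive (2) and (3) from it, so that the whole lemma rests on a single structural computation. For the ``only if'' direction of (1), if $G\simeq F\oplus D$ with $F$ finite and $D$ divisible, then $D/m=0$ gives $G/m\simeq F/m$, and letting $m$ run through multiples of the exponent of $F$ (a cofinal family) shows $\plim_m G/m\simeq F$, which is finite. The same bookkeeping proves the stated sufficient condition: the transition maps $G/m'\surj G/m$ (for $m\mid m'$) are surjective, so if the $G/m$ are finite of order bounded by some $C$, then along the cofinal chain $(n!)_n$ the orders are nondecreasing and bounded, hence eventually constant; beyond that point the transition maps are isomorphisms and $\plim_m G/m$ is isomorphic to the stable term, in particular finite.

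The substance of the lemma is the ``if'' direction of (1), and this is where I expect the main difficulty. First I would record that, because the index poset $(\Z_{\ge 1},\mid)$ has the countable cofinal chain $(n!)_n$ and all transition maps are surjective, the projection $\plim_m G/m\surj G/m$ is surjective for every $m$ (lift an element successively along the chain). Hence finiteness of $\wh{G}:=\plim_m G/m$ forces $|G/m|\le |\wh{G}|=:C$ for all $m$. Next I would split off the maximal divisible subgroup: every abelian group decomposes as $G\simeq R\oplus D$ with $D$ divisible (it is injective, hence a direct summand) and $R$ reduced, and since $G/m\simeq R/m$ we obtain $[R:mR]\le C$ uniformly. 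The crux is then to see that $R$ is finite. Consider the descending chain $R\supseteq 2!\,R\supseteq 3!\,R\supseteq\cdots$; its successive indices multiply to something $\le C$, so it stabilizes, say $n!\,R=m_0R$ for all $n\ge N$ with $m_0=N!$, and therefore $\bigcap_m mR=m_0R$. For each prime $p$ one has $m_0R=\bigcap_m mR\subseteq p\,m_0R\subseteq m_0R$, so $m_0R=p\,(m_0R)$; thus $m_0R$ is divisible, and being a subgroup of the reduced group $R$ it must vanish. Hence $m_0R=0$, so $R\simeq R/m_0R$ is finite and $G\simeq R\oplus D$ is finite by divisible.

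With (1) in hand, (2) and (3) are formal. For (2), a surjection $G\surj G'$ induces surjections $G/m\surj G'/m$ for all $m$; if $G$ is finite by divisible then $G/m\simeq F/m$ gives $|G/m|\le|F|$, whence $|G'/m|$ is finite and bounded independently of $m$, and applying (1) to $G'$ finishes the argument. For (3), I would apply the snake lemma to multiplication by $m$ on the exact sequence $0\to G''\to G\to G'\to 0$, yielding the exact piece $G'[m]\to G''/m\to G/m\to G'/m\to 0$. The kernel of $G''/m\to G/m$ is a quotient of $G'[m]$, which sits inside the finite group $G'$, while the image lies in $G/m$, whose order is bounded by (1). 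Thus $|G''/m|\le |G'|\cdot|G/m|$ is finite and bounded independently of $m$, and (1) shows $G''$ is finite by divisible.
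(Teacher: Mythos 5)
Your proposal is correct. For part (iii) --- the only part of the lemma the paper actually proves --- your argument is essentially identical to the paper's: the paper applies the $\Tor$ long exact sequence of $0\to G''\to G\to G'\to 0$ against $\Z/m$, obtaining $\Tor(G',\Z/m)\to G''/m\to G/m\to G'/m\to 0$ with $\Tor(G',\Z/m)=G'[m]$, which is exactly your snake-lemma sequence, and then bounds $|G''/m|$ by $|G'|\cdot|G/m|$ and invokes (i), just as you do. The difference is in parts (i) and (ii): the paper simply cites these from Raskind--Spiess \cite[Lemma 3.4.4]{RS00}, whereas you prove them from scratch. Your argument there is sound and worth noting: you reduce surjectivity of the projections $\plim_m G/m\surj G/m$ to the countable cofinal chain $(n!)_n$, split $G\simeq R\oplus D$ with $D$ the maximal divisible subgroup and $R$ reduced, use the uniform bound $[R:mR]\le C$ to force stabilization $n!R=m_0R$ for $n\ge N$, and then observe that $m_0R=\bigcap_m mR$ is divisible (since $m_0R\subseteq pm_0R$ for every prime $p$), hence zero in the reduced group $R$, so $R\simeq R/m_0R$ is finite. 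This buys self-containedness at the cost of length; the paper's choice buys brevity by outsourcing the structural input to \cite{RS00}.
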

\begin{proof}
	The assertions (i), (ii) follow from \cite[Lemma 3.4.4]{RS00}. 
	
	\sn 
	(iii) 
	For any $m\ge 1$, consider the exact sequence 
	\begin{equation}
		\label{eq:Tor}
		\Tor(G',\Z/m)\to G''/m \to G/m \to G'/m \to 0 
	\end{equation}
	induced from the short exact sequence $0 \to G'' \to G \to G' \to  0$. 
	Since $G$ is finite by divisible, 
	$G/m$ is finite and its order is bounded independently of $m$. 
	From $\Tor(G',\Z/m) = G'[m] \subset G'$ and $G'$ is finite, 
	both $G'/m$ and $\Tor(G',\Z/m)$ are finite and their orders are bounded. 
	From the exact sequence \eqref{eq:Tor} 
	the same holds for $G/m$ and hence $G$ is finite by divisible from (i). 
\end{proof}

\subsection*{Mackey products, and the Galois symbol map}
We recall the definition and properties of Mackey 
functors following \cite[(3.2)]{RS00}.  
For properties of Mackey functors, see also \cite{Kah92a}, \cite{Kah92b}.

\begin{dfn}[{\Cf \cite[Section~3]{RS00}}]
\label{def:Mack}
    A \textbf{Mackey functor} $\M$ (over $k$)  (or a \textbf{$G_k$-modulation} in the sense of \cite[Definition~1.5.10]{NSW08}) 
    is a contravariant 
    functor from the category of \'etale schemes over $k$ 
    to the category of abelian groups 
    equipped with a covariant structure 
    for finite morphisms 
    such that 
    $\M(X_1 \sqcup X_2)  = \M(X_1) \oplus \M(X_2)$ 
    and if the left diagram below is Cartesian, then the right becomes commutative: 
    \[
    \xymatrix@C=15mm{
      X' \ar[d]_{f'}\ar[r]^{g'} & X \ar[d]^{f} \\
      Y' \ar[r]^{g} & Y
    }\qquad 
        \xymatrix@C=15mm{
      \M(X') \ar[r]^{{g'}_{\ast}} & \M(X)\\
      \M(Y') \ar[u]^{{f'}^{\ast}} \ar[r]^{{g}_{\ast}} & \M(Y)\ar[u]_{f^{\ast}}.
    }
    \]
\end{dfn}

For a Mackey functor $\M$, 
we denote by $\M(K)$ its value $\M(\Spec(K))$  
for a field extension $K$ of $k$.
For any finite extensions $k\subset K \subset L$, 
the induced homomorphisms 
from the canonical map $j\colon\Spec(L) \to \Spec (K)$
are denoted by 
$N_{L/K}:=j_{\ast}:\M(L) \to \M(K)$ 
which is often referred as the \textbf{norm map},\ and\  
$\Res_{L/K}:=j^{\ast}:\M(K)\to \M(L)$ is called the \textbf{restriction}.

\begin{ex}
\label{MFexs}
	\begin{enumerate}
	\item Let $G$ be a commutative algebraic group over $k$. 
	Then, $G$ induces a Mackey functor by defining $G(K) =G(\Spec K)$ for $K/k$ finite. 
	\item 
	For a Mackey functor $\M$, 
	and for $m \in \Z_{\ge 1}$, we define a Mackey functor  $\M/m$ 
	by $(\M/m) (K) := \M(K)/m$
	for any finite extension $K/k$. 
	\end{enumerate}	
\end{ex}

The category of Mackey functors forms 
an abelian category with the following tensor product:

\begin{dfn}[\Cf \cite{Kah92a}]
	\label{def:otimesM}
	For Mackey functors $\M$ and $\N$, their \textbf{Mackey product} $\M\otimesM \N$ 
	is defined as follows: For any field extension $k'/k$, 
	\begin{equation}
		\label{eq:otimesM}
		(\M\otimesM \N ) (k') := 
		\left.\left(\bigoplus_{K/k':\,\mathrm{finite}} \M(K) \otimesZ \N(K)\right)\middle/\ (\textbf{PF}),\right. 
	\end{equation}
	where \PF\ stands for the subgroup generated 
	by elements of the following form: 
	\begin{itemize}[leftmargin=!,  align=left, nosep]
	\item [\PF]
	For finite field extensions 
	$k' \subset K \subset L$, 
	\begin{align*}
		&N_{L/K}(x) \otimes y - x \otimes \Res_{L/K}(y) \quad 
		\mbox{for $x \in \M(L)$ and $y \in \N(K)$, and}\\
		&  x \otimes N_{L/K}(y) - \Res_{L/K}(x) \otimes y\quad 
		\mbox{for $x \in \M(K)$ and $y \in \N(L)$}.
	\end{align*}
	\end{itemize}
\end{dfn}
For the Mackey product  $\M\otimesM \N$,  
we write $\set{x,y}_{K/k'}$ 
for the image of $x \otimes y \in \M(K) \otimesZ \N(K)$ in the product 
$(\M\otimesM \N)(k')$. 
For any finite field extension $k'/k$, the norm map 
$ N_{k'/k} = j_{\ast}: (\M \otimesM \N )(k') \to 
(\M\otimesM \N) (k)$ is given by 
\begin{equation}	\label{eq:norm map}
	N_{k'/k}(\set{x,y}_{K/k'}) = \set{x,y}_{K/k}.
\end{equation}

Let $G$ be a semi-abelian variety over $k$. 
For any $m\in \Z_{\ge 1}$, 
the connecting homomorphism associated to the short exact sequence 
$0\to G[m]\to G \onto{m} G \to 0$ as $G_k$-modules gives, 
for each finite extension $K/k$, 
\begin{equation}\label{def:Kummer}
	\delta_{G}\colon G(K)/m \inj H^1(K,G[m]):=H^1(G_K,G[m]),
\end{equation}
which is often called the \textbf{Kummer map}. 

\begin{dfn}[{\Cf \cite[Proposition~1.5]{Som90}}]\label{def:symbol}
	For semi-abelian varieties $G_1$ and $G_2$ over $k$,
	the \textbf{Galois symbol map} 
	\begin{equation}	\label{def:sM}
		s_m\colon(G_1 \otimesM G_2)(k)/m \to H^2(k,G_1[m]\otimes G_2[m])
	\end{equation}
	is defined by the cup product and the corestriction:  
	 $s_m(\set{x,y}_{K/k}) = \mathrm{Cor}_{K/k}\left(\delta_{G_1}(x)\cup \delta_{G_2}(y)\right)$. 
	The map is well-defined by the functorial properties of Galois cohomology 
	(\Cf \cite[Proposition 1.5.3 (iv)]{NSW08}).
\end{dfn}

For semi-abelian varieties $G_1, G_2$ over $k$, the \textbf{Somekawa $K$-group} $K(k;G_1,G_2)$ 
attached to $G_1, G_2$ is a quotient of the Mackey product $(G_1\otimes  G_2)(k)$ 
(see \cite{Som90} for the precise definition)
by considering $G_1,G_2$ as Mackey functors (\Cf \autoref{MFexs}). 
By definition, 
for every $K/k$ finite there is a surjection,
$(G_1\otimesM G_2)(K)\surj K(K;G_1,G_2)$.
The elements of $K(k;G_1,G_2)$ will also be denoted as linear combinations of symbols of the form 
$\set{x_1,x_2}_{K/k}$, where $K/k$ is some finite extension and $x_i\in G_i(K)$ for $i=1,2$.
The Galois symbol map 
$s_{m}:(G_1\otimesM G_2)(k)/m \to H^2(k,G_1[m]\otimes G_2[m])$ (\autoref{def:symbol}) 
factors through $K(k;G_1,G_2)$ and the 
induced map 
\begin{equation}	\label{def:s}
	s_m\colon K(k;G_1, G_2)/m \to H^2(k,G_1[m]\otimes G_2[m])
\end{equation}
is also called the \textbf{Galois symbol map}.

\subsection*{Geometric fundamental groups, and their ``ramified parts''}
Let $V$ be a projective and smooth variety over $k$. 
We assume that there exists a $k$-rational point $x \in V(k)$. 
From this assumption, $V$ is geometrically connected. 
The abelianization of the fundamental group $\pi_1(V)$ is denoted by 
$\piab(V)$. Since we always consider the abelian fundamental groups, we omit the geometric point. 
Furthermore, 
we say that $\varphi\colon W\to V$ is an \textbf{abelian covering} 
if $\varphi$ is an \'etale covering (that is, finite and \'etale), 
and is Galois whose Galois group $\Aut(\varphi)$ is an abelian group. 
%The abelian fundamental group $\piabV$ \emph{classifies} the abelian coverings of $V$ 
%in the following sense: 
%For any finite abelian group $G$, we have 
%\[
%\Hom_{\mathrm{cont}}(\piabV,G) \simeq H^1_{\et}(V,G).
%\]
%%(\cite{MilneEC})
%The set of continuous homomorphisms $\Hom_{\mathrm{cont}}(\piabV,G)$ 
%in the left hand side corresonds to 
%the set of isomorphism classes of abelian coverings $\varphi\colon W\to V$ with $\Aut(\varphi) \simeq G$.
Let $k(V)$ be the function field of $V$. 
The map $\Spec(k(V)) \to V$ induces 
a surjective homomorphism 
\begin{equation}	\label{eq:GkVab}
	\Gal(\kV^{\ab}/\kV)\simeq \piab(\Spec(\kV)) \surj \piab(V), 
\end{equation}
where $\kV^{\ab}$ is the maximal abelian extension of $\kV$ (\cite[Expos\'e IX, Proposition 8.2]{SGA1}). 
We define the \textbf{maximal unramified extension} $\kV^{\urab}$ of $\kV$ by 
\begin{equation}	\label{def:abur}
	\kV^{\urab} := \bigcup_{\substack{\kV\subset F \subset \kV^{\ab}\\ 
	\mbox{\tiny unramified over $V$}}} F.
\end{equation} % [Fu], Prop. 3.3.6
Here, a finite extension $F/\kV$ is said to be \textbf{unramified over $V$}, 
if the normalization of $V$ in $F$ is unramified over $V$, 
or equivalently, \'etale over $V$. % [Fu], p. 115 
The kernel of the map \eqref{eq:GkVab} is $\Gal(\kV^{\ab}/\kV^{\urab})$ 
and hence $\piabV \simeq \Gal(\kV^{\urab}/\kV)$. 
The structure map $V\to \Spec(k)$ induces 
a surjective homomorphism $\pi_1(V)\surj \pi_1(\Spec(k)) = G_k$ 
(\cite[Expos\'e IX, Th\'eor\`eme 6.1]{SGA1}).  
This map induces a short exact sequence 
\begin{equation}	\label{eq:fund_ex_seq}
	0 \to \piab(V)^{\geo} \to \piab(V) \to G_k^{\ab} \to 0, 
\end{equation}
where $\piab(V)^{\geo}$ is defined by the exactness and is called 
the \textbf{geometric fundamental group} of $V$. 
By the fixed $k$-rational point $x\in V(k)$, the above sequence splits. 
The fundamental group $\piab(V)^{\geo}$ 
classifies (abelian) \emph{geometric coverings} of $X$. 
Here, an abelian covering $\varphi \colon V'\to V$ is said to be \textbf{geometric} if 
the fiber $\varphi^{-1}(x) = V'\times_V x \to \Spec(k)$ of $\varphi$ over $x$ is \textbf{completely split}, 
in the sense that $\varphi^{-1}(x)$ is the sum of 
distinct $[k(V'):k(V)]$ $k$-rational points.
%(fppf) torsors $f:P\to V$ 
%whose fiber $f^{-1}(x_0) = P\times_{V} x_0 \to \Spec (k)$ is completely split 
%through the isomorphism 
%$\Hom(\piab(V)^{\geo},G) \simeq H^1_{\et}(V,G)$ 
%for any finite abelian group $G$. 
%Moreover, for a finite abelian group $G$, 
%a $G$-torsor $P$ on $V$ is connected if and only if 
%the classifying map $\piab(V)^{\geo} \to G$ is surjective 
(\Cf \cite[II Preliminaries]{KL81}).
More precisely,  
the geometric fundamental group $\piabXgeo$ is written as 
\[
	\piabVgeo \simeq \Gal(\kV^{\urab}/\kV k^{\ab}) \simeq \Gal(\kV^\geo/\kV), 
\]
where 
\[
	\kV^{\geo} := \bigcup_{\substack{\kV\subset F \subset \kV^{\urab}\\ 
	\mbox{\tiny completely split over $x$}}} F.
\]
Here, a finite extension $k\subset F \subset \kV^{\urab}$ 
is said to be \textbf{completely split} over $x$ 
if the normalization of $V$ in $F$ is completely split over $x$. 

In the following, we assume that $V$ has \textbf{good reduction}, that is, 
there exists a proper smooth model over $\Ok$ of $V$. 
We denote by $\Vbar = \V \otimes_{\Ok}\Fk$ the special fiber of $\V$ which is 
a smooth variety over the finite field $\Fk$. 
In this case, it is known that $\piabVgeo$ is finite (\cite[Corollary 1.2]{Yos03}, \cite[Chapter 4]{Ras95}). 
By the valuative criterion for properness, 
the fixed rational point $x$ gives rise to an $\Ok$-rational point of $\V$ and hence to an 
$\Fk$-rational point of $\Vbar$ denoted by $\xbar$.   
In the same way as above, we have a split short exact sequence 
\[
0 \to \piab(\Vbar)^{\geo} \to \piab (\Vbar) \to G_{\Fk}\to  0. 
\]
By \cite[Expos\'e X, Th\'eor\`em 2.1]{SGA1},
there is a canonical surjection 
\begin{equation}
\label{eq:sp}
\sp\colon \piab(V)\to \piab(\V) \simeq  \piab(\Vbar)
\end{equation}
%If an \'etale covering of $\W\to \V$ is connected, 
%then its generic fiber $W = \W_{k} \to \V_k = V$ is conntected 
%(\Cf Stack Lemma 37.26.7.
%\href{https://stacks.math.columbia.edu/tag/055J}{Lemma 055J}) 
%from \cite{Fu}, Prop. 3.3.4, the map $\piab(V)\to \piab(\V)$ is surjective 
and this induces the following commutative diagram:
\begin{equation}
\label{diag:sp}	
\vcenter{
\xymatrix{
0 \ar[r] & \piab(V)^{\geo} \ar[r]\ar@{-->}[d]^{\sp} &  \piab (V) \ar[r]\ar@{->>}[d]^{\sp} & G_k^{\ab}\ar[r]\ar@{->>}[d] & 0\, \\
0 \ar[r] & \piab(\Vbar)^{\geo} \ar[r] & \piab (\Vbar) \ar[r] &  G_{\Fk}\ar[r] &  0.
}}
\end{equation}
As the horizontal sequences split, 
the specialization map 
$\sp\colon \piab(V)^{\geo}\to \piab(\Vbar)^{\geo}$ on the geometric fundamental groups 
is surjective. 

\begin{dfn}[{\cite[Definition 2.2]{Yos02}}]\label{def:piram}
	We denote by $\piab(V)_{\ram}$ (resp.\ $\piab(V)_{\ram}^{\geo}$) the kernel of the specialization map 
	$\sp\colon \piab(V) \to \piab(\Vbar)$ 
	(resp.\ $\sp\colon  \piab(V)^{\geo}\to \piab(\Vbar)^{\geo}$ on the geometric fundamental groups).
	The abelian coverings corresponding to $\piabVram$ are said to be 
	\textbf{completely ramified over $\Vbar$}. 
\end{dfn}

For the later use, we give a precise description of $\piabVram$. 
First, we recall the construction of the map $\sp\colon \piabV\to \piabVbar$: 
For an \'etale covering $\ol{\varphi}\colon\Wbar \to \Vbar$, 
there exists a unique \'etale covering $\W\to \V$ such that 
its closed fiber is $\ol{\varphi}$ 
(\cite[Expos\'e IX, Th\'eor\`eme 1.10]{SGA1}). 
By taking the generic fiber $\varphi\colon W \to V$ of $\W\to \V$,  
we obtain  
\begin{equation}
\label{eq:covers}
\vcenter{
\xymatrix{
W\ar[d]_{\varphi} \ar[r] &  \W \ar[d] & \ar[l] \Wbar\ar[d]^{\ol{\varphi}}\, \\
V\ar[r] & \V & \ar[l] \Vbar.
}}
\end{equation}
This induces the map $\sp\colon \piabV\to \piabVbar$. 
\begin{dfn}
\label{def:ur}
	For an abelian covering $\varphi\colon W\to V$ with Galois group $\Aut(\varphi) = G$, 
	we say that $\varphi\colon W\to V$ is \textbf{unramified over $\Vbar$}, 
	if there exists an abelian covering $\ol{\varphi}\colon \Wbar \to \Vbar$ with $\Aut(\ol{\varphi}) \simeq 
	G$ such that $\varphi$ and $\ol{\varphi}$ fit into the diagram \eqref{eq:covers} as above.	
\end{dfn}

We define 
\begin{equation}
\label{def:kVVbarur}
	\kV_{\Vbar}^{\ur} := \bigcup_{\substack{
	\kV\subset F \subset \kV^{\urab}\\
\mbox{\tiny unramified over $\Vbar$}}}F.
\end{equation}
Here, a finite field extension $F/\kV$ is said to be \textbf{unramified over $\Vbar$} if the normalization of $V$ 
in $F$ is unramified over $\Vbar$.  
We have $\piabVbar \simeq \Gal(\Fk(\Vbar)^{\urab}/\Fk(\Vbar)) \simeq \Gal(\kV_{\Vbar}^{\ur}/\kV)$. 
In particular, 
%By taking the Pontrjagin duality of the surjection 
%$\piabV \xrightarrow{\sp} \piabVbar$, we have 
%an injection 
%$\sp^{\vee}\colon H^1_{\et}(\Vbar,\QZ) \inj H^1_{\et}(V,\QZ)$. 
there is a one to one correspondence 
\begin{equation}
	\label{eq:1to1}
\set{\mbox{abelian coverings of $V$ unramified over $\Vbar$ }}
\stackrel{1:1}{\longleftrightarrow} \set{\mbox{abelian coverings of $\Vbar$}}.
\end{equation}
A diagram of fields and their Galois groups is  
\[
\xymatrix{
               &           &\kV^{\urab} \ar@{--}[rd] & \\
               &    &               & k^{\urab}\kV^{\geo}\ar@{--}[rd] &  \\
   \kV k^{\ab}\ar@/_10mm/@{-}[rrdd]_{G_k^{\ab}}\ar@{--}[rruu]\ar@/^10mm/@{-}[rruu]^{\piabV^{\geo}}\ar@{--}[rd]  &           & \kV_{\Vbar}^{\ur}\ar@{--}[uu] \ar@/^5mm/@{-}[uu]^{\piabV_{\ram}} \ar@{--}[ru]   \ar@/_5mm/@{-}[ru]_{\piabVgeoram}    &                    &  \kV^{\geo} \\
               & \kV k^{\urab}\ar@{--}[rd]\ar@{--}[ru]\ar@/^5mm/@{-}[ru]^{\piabVbar^{\geo}}  & & \\
               &           & \kV\ar@{--}[rruu]\ar@{--}[uu]\ar@/_5mm/@{-}[uu]_{\piab(\Vbar)}  
}
\]
(\Cf The diagram of fields and Galois groups in \cite[Introduction]{KL81}). 
An abelian covering $\varphi\colon W\to V$ is completely ramified over $\Vbar$ 
if and only if $\varphi$ does not have a sub covering which is unramified over $\Vbar$.

\subsection*{Class field theory for curves over a $p$-adic fields}
We keep the notation and the assumptions: $X$ is a projective smooth curve over $k$  with $X(k)\neq \emptyset$ 
and has good reduction. 
%By the semi-stable reduction theorem, at least $X$ has semi-stable reduction, 
%that is, there exists a model $\X$ over $\Ok$ of $X$ whose 
%closed fiber $\Xbar = X\otimes_{\Ok}\Fk$ is semistable, \Ie $\Xbar$ is reduced and has at most
%ordinary double points as singularities
%(\cite[Theorem 2.4]{DM69}).
Following \cite{Blo81}, \cite{Sai85a}, we 
recall the class field theory for the curve $X$. 
The group $\SKX$ is defined by the cokernel of the tame symbol map 
\begin{equation}
	\label{def:SK1}
\SKX = \Coker\!\big(\d \colon K_2^M(k(X)) \to \bigoplus_{x} k(x)^{\times}\big), 
\end{equation}
where  
$x$ runs through the set of closed points in $X$, 
$k(x)$ is the residue field at $x$, and 
$k(X)$ is the function field of $X$. 
The norm maps $N_{k(x)/k}\colon k(x)^{\times} \to \kt$  for closed points $x$ 
induce $N\colon \SKX \to \kt$. 
Its kernel is denoted by $\VX$. 
The reciprocity map $\sigma_X\colon \SKX \to \piab(X)$ is compatible with 
the reciprocity map $\rho_k\colon \kt \to G_k^{\ab}$ of local class field theory as in the commutative diagram: 
\begin{equation}
\label{eq:rho}
	\vcenter{
	\xymatrix{
	0 \ar[r]& V(X)\ar@{-->}[d]^{\tau_X} \ar[r] &SK_1(X) \ar[r]^-{N}\ar[d]^{\sigma_X} & \kt \ar[d]^{\rho_k}\,\\
	0  \ar[r]& \piab(X)^{\geo} \ar[r]& \piab(X)\ar[r] & G_k^{\ab}\ar[r] & 0,
	}}
\end{equation}
where the bottom horizontal sequence is induced from the structure map $X\to \Spec(k)$ (\Cf \eqref{eq:fund_ex_seq}). 
The diagram above gives a map $\tau_X\colon \VX \to  \piabXgeo$ 
to describe the geometric fundamental group $\piabXgeo$.
In fact, the above short exact sequences split 
from the assumption $X(k)\neq \emptyset$. 
The main theorem of the class field theory for $X$ is the following:  

\begin{thm}[\cite{Blo81}, \cite{Sai85a}]
\label{thm:cft}
The following are true for the reciprocity maps $\sigma_X$ and $\tau_X$. 
\begin{enumerate}
	\item The reciprocity map $\sigma_X$ has dense image in $\piabX$, and 
	$\Ker(\sigma_X) = SK_1(X)_{\div}$, 
	where $SK_1(X)_{\div}$ is the maximal divisible subgroup of $SK_1(X)$. 
	\item The map $\tau_X$ is surjective, and $\Ker(\tau_X) = \VXdiv$, 
	where $\VXdiv$ is the maximal divisible subgroup of $\VX$. 
	\item $\Im(\tau_X)$ is finite.
\end{enumerate}
\end{thm}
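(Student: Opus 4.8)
The plan is to reduce the three assertions to the geometric reciprocity map $\tau_X\colon \VX \to \piabXgeo$ and then to analyze $\VX$ by comparison with Galois cohomology, taking as given the cited finiteness of $\piabXgeo$ (\cite{Yos03}, \cite{Ras95}). First I would exploit the splittings in \eqref{eq:rho}. Since $X(k)\neq\emptyset$, the top row $0\to \VX \to \SKX \xrightarrow{N} \kt \to 0$ is split exact, and local class field theory gives that $\rho_k\colon \kt \to \Gkab$ is injective with dense image. As $\kt$ has trivial maximal divisible subgroup, the splitting $\SKX\simeq \kt\oplus \VX$ yields $\SKX_{\div}=\VXdiv$; and injectivity of $\rho_k$ forces $\Ker(\sigma_X)\subseteq \VX$, so that $\Ker(\sigma_X)=\Ker(\tau_X)$. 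Hence the kernel statements of (i) and (ii) coincide, and once $\tau_X$ is surjective, density of $\sigma_X$ follows from density of $\rho_k$ together with the finiteness of $\piabXgeo$ (so that the geometric part is hit exactly). Assertion (iii) is then immediate, since $\Im(\tau_X)\subseteq \piabXgeo$ and the latter is finite.

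One inclusion of the kernel identity is formal: $\piabX$ is profinite and $\piabXgeo$ is finite, and a profinite abelian group has no nonzero divisible subgroup; thus $\tau_X(\VXdiv)$ is a divisible subgroup of a finite group, hence trivial, giving $\VXdiv\subseteq\Ker(\tau_X)$. For surjectivity of $\tau_X$ I would argue by Chebotarev density. Fix a finite geometric abelian covering, that is a finite quotient $\piabXgeo\surj G$; since $\SKX$ is a quotient of $\bigoplus_x \kxt$ and $\sigma_X$ is compatible with the local reciprocity maps at the closed points of $X$ (equivalently of the regular model $\X$), the image of $\tau_X$ in $G$ equals the subgroup generated by the decomposition groups $D_x$, each being the image of $\kxt\to D_x$. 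Realizing a generating set by norm-$1$ symbols (for instance differences of symbols supported at $k$-rational points), and invoking Chebotarev density on the arithmetic surface $\X$ — where the closed points of the special fiber $\Xbar$ over $\Fk$ supply enough Frobenii — one sees that these decomposition groups generate all of $G$. Hence $\tau_X$ is surjective.

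The substance of the theorem is the reverse inclusion $\Ker(\tau_X)\subseteq\VXdiv$, equivalently injectivity of $\VX/\VXdiv\to\piabXgeo$. First I would prove that $\VX$ is finite by divisible; by \autoref{lem:RS3.4.4}(i) it suffices to bound $\VX/m$ independently of $m$. The mechanism is a comparison of the mod-$m$ reciprocity map with Galois cohomology: the Kummer maps \eqref{def:Kummer} for $J=\Jac(X)$ and for $\Gm$, together with the Albanese map attached to the rational point, express $\VX/m$ through the finite Galois cohomology groups $H^1(k,J[m])$ and $H^2(k,\mu_m^{\otimes 2})$, whose orders are controlled uniformly in $m$ by the local Euler characteristic formula and local duality. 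Under good reduction this is organized cleanly as Artin--Verdier/Poincar\'e duality on the regular model $\X$ over $\Ok$, with the Gersten resolution of $\mathcal{K}_2$ and the Merkurjev--Suslin theorem identifying $\SKX/m$ with \'etale cohomology of $X$ with $\mu_m^{\otimes 2}$-coefficients, dual to $\piabX/m$. Since $\piabXgeo$ is finite, the geometric side of this perfect pairing is a subquotient of a fixed finite group; this at once bounds $\VX/m$ and shows that $\VX/\VXdiv\to\piabXgeo$ is injective. With the surjectivity above, $\VX/\VXdiv\isomto\piabXgeo$, which proves (ii) and, by the first paragraph, (i).

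The main obstacle is exactly this last step: that $\VX$ is finite by divisible and that $\tau_X$ is injective modulo its divisible subgroup. The formal reductions of the first two paragraphs turn the theorem into this finiteness-and-injectivity statement but cannot produce it; here the good-reduction hypothesis and a genuine duality input for the arithmetic surface (equivalently the finiteness theorems of \cite{Blo81}, \cite{Sai85a}) are indispensable.
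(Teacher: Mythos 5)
This theorem is not proved in the paper at all: it is imported wholesale from Bloch \cite{Blo81} and Saito \cite{Sai85a}, which is what the citation in its header indicates, so your sketch must be measured against the actual content of those papers. Your formal reductions are correct as far as they go: the splitting of \eqref{eq:rho} gives $SK_1(X)_{\div}=\VXdiv$ and $\Ker(\sigma_X)=\Ker(\tau_X)$; the inclusion $\VXdiv\subseteq\Ker(\tau_X)$ is automatic (a divisible subgroup maps to zero in every finite quotient of the profinite group $\piabX$); and density in (i) does follow from surjectivity in (ii) plus local class field theory. But both substantive steps fail. The Chebotarev argument for surjectivity of $\tau_X$ conflates the closed points of the curve $X$ with the closed points of the model $\X$: the latter are the closed points of $\Xbar$, their Frobenius elements live in $\pi_1(\X)\simeq\piab(\Xbar)$, and by Lang's theorem they generate only the quotient $\piabXbargeo$ of $\piabXgeo$. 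They are blind to $\piabXgeoram$, because a covering of $X$ completely ramified over $\Xbar$ does not extend to an \'etale covering of $\X$ and induces the trivial covering of $\Xbar$. So your argument yields at best the surjectivity of $\sp\circ\tau_X\colon \VX\to\piabXbargeo$, which is the easy part (the paper gets it from the splitting in \eqref{def:dX} together with $\rho_{\Xbar}\colon A_0(\Xbar)\isomto\piabXbargeo$); it says nothing about hitting $\piabXgeoram$, which is nonzero in general --- by \autoref{cor:main} it contains $(\Z/p^{N_J})^{\oplus g}$ --- and producing classes of $\VX$ that map onto it is exactly where the duality theorems of \cite{Blo81}, \cite{Sai85a} are used. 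A warning that no soft density argument can suffice: for curves with bad reduction the density statement in (i) is false (the closure of $\Im(\sigma_X)$ has cokernel $\Zhat^{r}$ with $r>0$, e.g.\ for Tate curves), so good reduction must intervene at precisely this step, and Chebotarev on $\X$ does not use it in the relevant way.

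The kernel-and-finiteness step is, moreover, circular: after reducing (i)--(iii) to the statement that $\VX$ is finite by divisible and $\VX/\VXdiv\to\piabXgeo$ is injective, you invoke ``the finiteness theorems of \cite{Blo81}, \cite{Sai85a}'' --- that is, the theorem being proved. Two further inaccuracies in that paragraph: the order of $H^1(k,J[m])$ is \emph{not} bounded independently of $m$ (by the local Euler characteristic formula it grows like $|m^{2g}|_k^{-1}$); the group that is uniformly bounded is $H^2(k,J[m]\otimes\mu_m)\simeq J[m]_{G_k}$, Tate-dual to $\Jhat[m](k)\subseteq\Jhat(k)_{\tor}$, and to bound $\VX/m$ by it one needs the injectivity of the Galois symbol map, which is itself a theorem (in the paper it is \emph{deduced} from \autoref{thm:cft}; an independent proof over arbitrary fields is \cite[Appendix]{Yam05}, and the uniform-boundedness route is carried out in \cite{RS00}, cf.\ \autoref{lem:dec}). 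Likewise the finiteness of $\piabXgeo$ that you import from \cite{Yos03}, \cite{Ras95} belongs to the same circle of results, so it cannot be taken as a prior input. In short, your proposal is a reasonable reduction of the theorem to its own hardest assertions, plus one argument (Chebotarev) that genuinely cannot reach the ramified part of $\piabXgeo$; the correct course in the paper's context is simply to cite \cite{Blo81} and \cite{Sai85a}, as the authors do.
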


From the above theorem, $\tau_X$ induces 
an isomorphism 
$\VX/\VXdiv \isomto \piabXgeo$ of finite groups. 
Since an extension of a finite group by a divisible group splits, 
$\VX$ is finite by divisible: $\VX = \VXfin \oplus \VXdiv$. 
Moreover, the group $\VX$ can be realized as a Somekawa $K$-group as  
\begin{equation}
	\label{thm:Som}
	 V(X) \simeq  K(k; J, \Gm)
\end{equation} 
associated with the Jacobian variety $J = \Jac(X)$ and $\Gm$
(\cite[Theorem~2.1]{Som90}, \cite[Remark~2.4.2 (c)]{RS00}). 
For $X$ has good reduction, the Jacobian variety $J$ has also good
reduction. 
The reciprocity map $\tau_X\colon \VX \to \piabXgeo$ 
coincides with the Galois symbol map 
associated with $J$ and $\Gm$ (\cite[Proposition~1.5]{Som90}) as 
in the following commutative (up to sign) diagram: 
For any $m\in \Z_{\ge 1}$, 
\begin{equation}
	\label{diag:sm}
	\vcenter{\xymatrix{
	\VX/m \ar[d]_{\simeq}^{\eqref{thm:Som}} \ar[r]^-{\tau_{X,m}} & \ar[d]^{\simeq} \piab(X)^{\geo}/m\\
	K(k;J,\Gm)/m\ar[r]^-{s_m} & H^2(k,J[m] \otimes \mu_{m})
	}}
\end{equation}
(\Cf \cite[Theorem~1.14]{Blo81}). 
Here, the right vertical map is induced from 
$H^2(k,J[m] \otimes \mu_m) \simeq J[m]_{G_k}$. 
By the class field theory for $X$ (\autoref{thm:cft}), 
the map $\tau_{X,m}$ induced from $\tau_X$ is surjective. 
 As $\Ker(\tau_X)$ is divisible, $\tau_{X,m}$ is injective. 
 We conclude that the Galois symbol $s_m$ is bijective for every $m\geq 1$.  
(Note that the injectivity of $s_m$ 
has also been established for an arbitrary field in \cite[Appendix]{Yam05}.) 

By \cite[Section 2]{KS83b}, there is a surjective homomorphism 
$SK_1(X)\to \CH(\Xbar)$, 
called  the \textbf{boundary map}, where $\CH(\Xbar)$ is the Chow group of 
the special fiber $\Xbar = \X\otimes_{\Ok}\Fk$ of the model $\X$. 
This map is compatible with the valuation map $v_k$ of $k$ as 
the following commutative diagram indicates:
\begin{equation}	\label{def:dX}
	\vcenter{\xymatrix{
	0 \ar[r] & V(X) \ar[r]\ar@{-->}[d]^{\dX} & SK_1(X) \ar[r]^-N \ar@{->>}[d] & k^{\times} \ar[r]\ar[d]^{v_k} & 0\, \\
	0 \ar[r] & A_0(\Xbar) \ar[r] & CH_0(\Xbar)\ar[r]^-{\deg} & \Z \ar[r] & 0,
	}}
\end{equation}
where $\deg$ is the degree map, and $A_0(\Xbar)$ is its kernel. 
We denote by $\dX$ the induced map $\VX\to A_0(\Xbar)$. 
Because the horizontal sequences split, the boundary map $\dX$ is surjective.
A rational point $x\in X(k)$ gives rise to an $\Fk$-rational point of $\Xbar$ 
by the valuative criterion for properness. 
The Abel-Jacobi map gives an isomorphism 
$A_0(\Xbar)\isomto \Jbar(\Fk)$, 
where $\Jbar = \Jac(\Xbar)$ is the Jacobian variety of $\Xbar$. 

\begin{lem}\label{lem:dec_KerdX}
	$\Ker(\dX)$ is finite by divisible (in the sense of \autoref{def:fin-by-div}). 
	Namely, 
	$\Ker(\dX) = \Ker(\dX)_{\fin}\oplus \Ker(\dX)_{\div}$ 
	for a finite group $\Ker(\dX)_{\fin}$ and a divisible group $\Ker(\dX)_{\div}$.  
\end{lem}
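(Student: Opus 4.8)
The plan is to realize $\ker(\dX)$ as the kernel term of a short exact sequence to which \autoref{lem:RS3.4.4}(iii) applies directly. The two ingredients are already in place: from the discussion following \autoref{thm:cft} we know that $\VX$ is finite by divisible, and from the commutative diagram \eqref{def:dX} the boundary map $\dX$ is surjective, giving the short exact sequence
\[
0 \to \ker(\dX) \to \VX \xrightarrow{\dX} A_0(\Xbar) \to 0.
\]
Since the middle term $\VX$ is finite by divisible, \autoref{lem:RS3.4.4}(iii) will yield the claim as soon as we verify that the quotient $A_0(\Xbar)$ is a finite group.

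The finiteness of $A_0(\Xbar)$ is the one point to check, and it is immediate. The Abel--Jacobi map provides an isomorphism $A_0(\Xbar)\isomto \Jbar(\Fk)$, where $\Jbar = \Jac(\Xbar)$ is the Jacobian of the special fiber. As $\Xbar$ is a smooth projective curve over the finite field $\Fk$, the Jacobian $\Jbar$ is an abelian variety over $\Fk$, so $\Jbar(\Fk)$ is finite; hence $A_0(\Xbar)$ is finite.

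There is no genuinely hard step here: the whole argument is packaged in \autoref{lem:RS3.4.4}(iii) together with the finiteness of the group of rational points of an abelian variety over a finite field. The only thing to be careful about is invoking part (iii) in the correct direction, namely that it is the \emph{quotient} $A_0(\Xbar)$ which must be finite and the \emph{middle} term $\VX$ which carries the finite-by-divisible hypothesis; with the sequence written as above, both roles are filled correctly and the conclusion follows.
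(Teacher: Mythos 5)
Your proof is correct and is essentially the paper's own argument: both use the short exact sequence $0 \to \Ker(\dX) \to \VX \xrightarrow{\dX} A_0(\Xbar) \to 0$, the fact that $\VX$ is finite by divisible, the finiteness of $A_0(\Xbar)\simeq \Jbar(\Fk)$, and \autoref{lem:RS3.4.4}~(iii). The only difference is that you spell out why $\Jbar(\Fk)$ is finite, which the paper takes for granted.
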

\begin{proof}
	Consider the short exact sequence 
	$0 \to \Ker(\dX) \to \VX \to A_0(\Xbar)\to 0$. 
	As noted above $\VX$ is finite by divisible 
	and $A_0(\Xbar)\simeq \Jbar(\Fk)$ is finite.
	%(\cite[Corollary in Section 2]{KS83b}). 
	The assertion follows from \autoref{lem:RS3.4.4} (iii). 
\end{proof}

The classical class field theory (for the curve $\Xbar$ over $\Fk$)   
says that the reciprocity map $\rho_{\Xbar}:A_0(\Xbar) \isomto \piabXbargeo = \piab(\Xbar)_{\tor}$ is bijective 
of finite groups %(\cite[Section 1]{KS83b})
and makes 
the following diagram commutative: 
\[
	\xymatrix{
	0 \ar[r] & \Ker(\dX)\ar@{-->}[d]^{\mu_X}  \ar[r] & V(X)\ar[r]^-{\dX}\ar@{->>}[d]^{\tau_X}   &  A_0(\Xbar)\ar[d]^{\rho_{\Xbar}}_{\simeq} \ar[r] & 0\,\\ 
	0 \ar[r] & \piabX^{\geo}_{\ram} \ar[r] &\piabX^{\geo}\ar[r]^-{\sp}  &  \piabXbar^{\geo} \ar[r] & 0.
	}
\]
For the commutativity of the right square in the above diagram, 
see \cite[Proposition 2]{KS83b}. 
From the diagram, we obtain the surjective homomorphism 
$\mu_X\colon \Ker(\dX) \surj \piabXgeoram$ 
with $\Ker(\mu_X) \simeq \Ker(\tau_X) = \VXdiv$. 
Since the group $A_0(\Xbar)$ is finite, we have an equality $\Ker(\dX)_{\div} = \VXdiv$.  
Moreover, $\tau_X$ induces $\VX_{\fin} = \VX/\VX_{\div}\isomto\piabX^{\geo}$. It follows 
that the reciprocity map $\mu_X$ induces an isomorphism of finite groups
\begin{equation}\label{eq:mu}	
	\Ker(\dX)_{\fin} \isomto \piabXgeo_{\ram}.
\end{equation}

%%%%%%%%%%%%%%%%%%%%%%%%%%%
\section{Abelian varieties}
\label{sec:AV}
Throughout this section, 
we will be using the following notation: 
\begin{itemize}
	\item $A$: an abelian variety over $k$ of dimension $g = \dim (A)$ 
	 with good \emph{ordinary} reduction. 
	\item $\A$: the N\'eron model over $\Ok$ of $A$ (\cite[Section 1.2]{BLR90}). 
	\item $\Abar := \A\otimes_{\Ok}\Fk$: the special fiber of $\A$ which is an ordinary abelian variety over $\Fk$. 
	\item $\Ahat$: the formal group law over $\Ok$ of $A$ (\Cf \cite[Section C.2]{HS00}). 
\end{itemize}

The formal group law $\Ahat$ defines a Mackey functor by 
the associated group  
$\Ahat(K) := \Ahat(\mK)$ for a finite extension $K/k$.

\subsection*{Boundary map}
For any $m\ge 1$, the finite flat group scheme $\A[m]$ over $\Ok$ 
fits into the following connected-\'etale exact sequence 
% Stix, Prop. 37
\begin{equation}\label{seq:conn-et_A}
	0 \to \A[m]^{\circ}  \xrightarrow{\iota} \A[m] \xrightarrow{\pi} \A[m]^{\et}\to 0
\end{equation} 
(\Cf \cite[Section 1.4]{Tat67}). % \cite[Proposition 4.45?]{EMvdG}
By taking the limit $\plim_m$, we obtain 
the short exact sequence 
\begin{equation}\label{seq:conn-et_T}
	0 \to T(\A)^{\circ} \xrightarrow{\iota} T(\A) \xrightarrow{\pi} T(\A)^{\et} \to 0
\end{equation}
of the full Tate modules, 
where $T(\A)^{\bullet} := \plim_m \A[m]^{\bullet}$ 
for $\bullet \in \set{\circ, \emptyset, \et}$.
%(The right exactness follows from the functor $G\mapto G^\circ$ 
%is exact and hence $(\A[m]^{\circ})$ satisfies ML condition; Stix, Prop. 40)
On the other hand, 
the group 
$\Ahat(\kbar) := \ilim_{k'/k} \Ahat(\m_{k'})$ 
associated with the formal group law $\Ahat$ over $\Ok$ of $A$ 
gives the short exact sequence 
\begin{equation}\label{seq:conn-et_A2}
	0 \to \Ahat[m] \xrightarrow{\iota} A[m] \xrightarrow{\pi} \Abar[m] \to 0, 
\end{equation}
where $\Ahat[m] = \Ahat(\kbar)[m]$ is the $m$-torsion subgroup of $\Ahat(\kbar)$
(\cite[Theorem C.2.6]{HS00}). 
The valuative criterion of properness yields 
$\A[m]\simeq A[m]$ as $G_k$-modules. 
By the equivalence of categories between finite \'etale group schemes over 
$\Ok$ and finite $G_k$-modules, 
we have $\A[m]^{\et} \simeq \Abar[m]$ (\Cf \cite[Section 1.4]{Tat67}). 
The group $\Ahat(\kbar)$ has no non-trivial prime to $p$-torsion (\cite[Proposition C.2.5]{HS00}). 
By comparing the short exact sequences \eqref{seq:conn-et_A} and \eqref{seq:conn-et_A2}, we obtain
\begin{equation}\label{eq:conn part}
	\A[m]^{\circ} \simeq \Ahat[m],\quad T(\A)^{\et}\simeq \plim_m\Abar[m],\quad\mbox{and}\quad T(\A)^{\circ} \simeq  \plim_{m}\Ahat[m].
\end{equation}

By taking the $G_k$-coinvariance of \eqref{seq:conn-et_T}, we have 
\begin{equation}\label{seq:con-et_Gcoinv}
	 (T(\A)^{\circ})_{G_k} \xrightarrow{\iota} T(A)_{G_k} \xrightarrow{\pi} (T(\A)^{\et})_{G_k}\to 0.   
\end{equation} 
From \cite[(3.2.1)]{Som90} (see also \cite[Remark 2.7]{Blo81}), 
the \'etale quotient of the above sequence becomes  
\begin{equation}\label{eq:TAet}
	(T(\A)^{\et})_{G_k} \simeq \plim_{m} (\Abar[m])_{G_k} \simeq \Abar(\Fk).
\end{equation}
%Here, the action of $G_k$ on $\Abar$ is given by the Frobenius $\Fr \in G_{\Fk}$  
%and hence the isomorphism ($\spadesuit$) follows from 
%applying the snake lemma to the diagram:
%\[
%\xymatrix{
%0 \ar[r] & \plim \Abar[m]\ar[d]^{1-\Fr} \ar[r] & \plim_m \Abar[m] \otimes_{\Z} \Q \ar[r]\ar[d]^{1-\Fr}  &  \ilim_m \Abar[m]  \ar[r]\ar[d]^{1-\Fr} & 0\\
%0 \ar[r] & \plim \Abar[m] \ar[r] &  \plim_m \Abar[m] \otimes_{\Z} \Q \ar[r] &   \ilim_m \Abar[m]  \ar[r] & 0
%}
%\]
The Galois symbol map associated to $A$ and $\Gm$ induces 
\begin{equation}\label{def:dG}
	\d_A:=\d_{A,k} \colon K(k;A,\Gm)\xrightarrow{\plim_m s_m} \plim_m H^2(k,A[m]\otimes \mu_m) \stackrel{(\diamondsuit)}{\simeq}  T(A)_{G_k} \stackrel{\pi}{\surj} \Abar(\Fk),
\end{equation}
where the middle isomorphism ($\diamondsuit$) follows from 
the local Tate duality theorem (\cite[Theorem 7.2.6]{NSW08}, \Cf \cite[(2.2)]{Blo81}, 
see also \autoref{prop:coinv} in Appendix). 
We call this map $\d_A$ the \textbf{boundary map} of $A$.
It is known that the limit of the Galois symbol map $\plim_m s_m$ in \eqref{def:dG} 
is surjective (\cite[Theorem 3.3]{Som90}), so is $\dA$.

\begin{lem}\label{lem:dec}\label{lem:l-part}
\begin{enumerate}
	\item The groups 
	$(A\otimes \Gm)(k), K(k;A,\Gm)$ and $\Ker(\dA)$ are finite by divisible in the sense of 
	\autoref{def:fin-by-div}. 
 	\item For any $m\ge 1$ prime to $p$, we have $\Ker(\dA)/m =0$.
	\end{enumerate}
\end{lem}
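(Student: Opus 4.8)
The plan is to derive all of (i) from the single finiteness statement that the coinvariant module $T(A)_{G_k}$ is finite, and to obtain (ii) from the fact that the connected part of the Tate module is $p$-primary.

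For (i), I would first reduce to the Mackey product. Since $(A\otimesM\Gm)(k)$ surjects onto $K(k;A,\Gm)$, and $\ker(\dA)$ fits into the short exact sequence $0\to\ker(\dA)\to K(k;A,\Gm)\xrightarrow{\dA}\Abar(\Fk)\to 0$ in which $\Abar(\Fk)$ is finite and $\dA$ is surjective, \autoref{lem:RS3.4.4}\,(ii),(iii) reduce the problem to showing that $(A\otimesM\Gm)(k)$ is finite by divisible. By the criterion in \autoref{lem:RS3.4.4}\,(i) it suffices to prove that $\plim_m (A\otimesM\Gm)(k)/m$ is finite. Here I would invoke the Galois symbol $s_m$ of \autoref{def:symbol}: by the injectivity result of \cite{Yam05} it embeds $(A\otimesM\Gm)(k)/m$ into the finite group $H^2(k,A[m]\otimes\mu_m)$, and since inverse limits preserve injectivity, the identification $\plim_m H^2(k,A[m]\otimes\mu_m)\simeq T(A)_{G_k}$ from \eqref{def:dG} gives $\plim_m(A\otimesM\Gm)(k)/m\inj T(A)_{G_k}$. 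Thus (i) is reduced to the finiteness of $T(A)_{G_k}$.

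To prove $T(A)_{G_k}$ is finite I would use the connected--\'etale sequence. Taking $G_k$-coinvariants of \eqref{seq:conn-et_T} yields the right exact sequence \eqref{seq:con-et_Gcoinv}, whose \'etale quotient is $(T(\A)^{\et})_{G_k}\simeq\Abar(\Fk)$ by \eqref{eq:TAet}, a finite group; hence it is enough to see that the connected coinvariants $(T(\A)^{\circ})_{G_k}$ are finite. By \eqref{eq:conn part} the connected part is the $p$-primary module $T(\A)^{\circ}\simeq\plim_m\Ahat[m]$, the Tate module of the formal group $\Ahat$, free of rank $g$ over $\Zp$. This is where the ordinary hypothesis is essential and where I expect the main difficulty: over the maximal unramified extension the connected $p$-divisible group of the ordinary abelian variety $\Abar$ is of multiplicative type, so $T(\A)^{\circ}$ is, up to finite index, isomorphic to $\Zp(1)^{\oplus g}$ as a module over the inertia subgroup. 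Since the cyclotomic character has infinite order on inertia, $(\Zp(1))_{G_k}$ is finite (it is the source of the $\Z/p^{\Mur}$ appearing in the main theorem), and therefore $(T(\A)^{\circ})_{G_k}$, and with it $T(A)_{G_k}$, is finite. This completes (i).

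For (ii), fix $m$ prime to $p$. Because the connected part is $p$-primary, \eqref{eq:conn part} gives $A[m]\simeq\Abar[m]$ with no connected contribution, so the projection $\pi$ in \eqref{def:dG} is an isomorphism on the $m$-part and $\dA\bmod m$ is identified with $s_m$ followed by $H^2(k,A[m]\otimes\mu_m)\simeq(\Abar[m])_{G_k}\simeq\Abar(\Fk)/m$. As $s_m$ is injective and $\dA$ is surjective, $\dA\bmod m\colon K(k;A,\Gm)/m\isomto\Abar(\Fk)/m$ is an isomorphism. Now write $K(k;A,\Gm)=K(k;A,\Gm)_{\fin}\oplus K(k;A,\Gm)_{\div}$ from (i); the divisible summand lies in $\ker(\dA)$, so $\dA$ induces a surjection $K(k;A,\Gm)_{\fin}\surj\Abar(\Fk)$ with kernel $\ker(\dA)_{\fin}$. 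Taking $m$ to be a large integer prime to $p$ in the isomorphism $K(k;A,\Gm)/m\simeq\Abar(\Fk)/m$ shows that the prime-to-$p$ parts of $K(k;A,\Gm)_{\fin}$ and $\Abar(\Fk)$ have the same order, so the surjection is an isomorphism on prime-to-$p$ parts and $\ker(\dA)_{\fin}$ is a $p$-group. Hence $\ker(\dA)/m=\ker(\dA)_{\fin}/m=0$ for every $m$ prime to $p$, proving (ii). The crux throughout is the finiteness of the ramified $p$-adic coinvariants $(T(\A)^{\circ})_{G_k}$, for which the ordinary-reduction hypothesis cannot be dispensed with.
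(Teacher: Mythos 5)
Your reduction of $K(k;A,\Gm)$ and $\Ker(\dA)$ to \autoref{lem:RS3.4.4} follows the same skeleton as the paper, but there is a genuine gap in your proof of (i) for the Mackey product $(A\otimesM\Gm)(k)$ itself. The injectivity result of \cite[Appendix]{Yam05} (and likewise the bijectivity deduced in \eqref{diag:sm}) concerns the Galois symbol on the \emph{Somekawa $K$-group}, $s_m\colon K(k;A,\Gm)/m\to H^2(k,A[m]\otimes\mu_m)$. On the Mackey product the symbol factors as
\[
(A\otimesM\Gm)(k)/m \surj K(k;A,\Gm)/m \xrightarrow{\ s_m\ } H^2(k,A[m]\otimes\mu_m),
\]
and the first arrow is a priori only surjective: the Somekawa $K$-group is a quotient of the Mackey product by further relations, and showing that these relations impose nothing modulo $m$ is itself a nontrivial theorem, proved only under extra hypotheses (this is exactly the content of results such as \cite[Lemma 4.1, Corollary 4.3]{Hir21} and \cite[Lemma 4.2.1]{RS00}). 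Since the property of being finite by divisible passes to quotients (\autoref{lem:RS3.4.4}\,(ii)) but does not lift from a quotient back to its source, your embedding argument establishes (i) for $K(k;A,\Gm)$, and then for $\Ker(\dA)$ via (iii), but it says nothing about $(A\otimesM\Gm)(k)$, which is part of the statement. The paper closes precisely this point by invoking the proof of \cite[Theorem 4.5]{RS00}, which bounds $\#\,(A\otimesM\Gm)(k)/m$ independently of $m$ directly at the Mackey level; some input of this kind is unavoidable here.

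The rest of your argument is correct, and in places more self-contained than the paper's. Your finiteness proof for $T(A)_{G_k}$ (étale coinvariants give $\Abar(\Fk)$ by \eqref{eq:TAet}; connected coinvariants are finite because $T_p(\A)^{\circ}\simeq\Zp(1)^{\oplus g}$ as inertia modules by Mazur's theorem, and the cyclotomic character is nontrivial on inertia) is valid and makes explicit what the paper leaves inside the citation of \cite{RS00}. In part (ii) you rederive the key isomorphism $K(k;A,\Gm)/m\simeq\Abar(\Fk)/m$ for $m$ prime to $p$ from the injectivity of $s_m$ (legitimate here, at the $K$-group level), the identification $H^2(k,A[m]\otimes\mu_m)\simeq (\Abar[m])_{G_k}\simeq\Abar(\Fk)/m$, and the surjectivity of $\dA$, where the paper instead cites \cite[Proposition 2.6]{Hir21}; the counting argument that follows is the same as the paper's. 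One small correction: the finiteness of $T_p(A)_{G_k}$ does not in fact require ordinary reduction (it holds for any abelian variety with good reduction over a $p$-adic field, e.g.\ by local duality and the vanishing of $H^0(k,V_p(A^{\vee}))$); ordinarity is only what makes the connected part explicitly cyclotomic over $k^{\ur}$, so your closing claim overstates its role.
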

\begin{proof}
	(i) The proof of \cite[Theorem 4.5]{RS00} implies that 
	$(A\otimes \Gm)(k)/m$ is finite and its order is bounded independently of $m$. 
	This implies the first assertion by \autoref{lem:RS3.4.4} (i) (as in \autoref{lem:dec_KerdX}). 
	Since we have the quotient map $(A\otimes \Gm)(k) \surj K(k;A,\Gm)$, 
	the second assertion follows (\autoref{lem:RS3.4.4} (ii)). 
	
	Consider the short exact exact sequence 
	$0 \to \Ker(\dA) \to K(k;A,\Gm) \to \Abar(\Fk)\to 0$. 
	Since $\Abar(\Fk)$ is finite, \autoref{lem:RS3.4.4} (iii) implies that 
	$\Ker(\dA)$ is finite by divisible. 
	
	\sn
	(ii) 
	From (i), 
	there are decompositions $K(k;A,\Gm) = K(k;A,\Gm)_{\fin}\oplus K(k;A,\Gm)_{\div}$ 
	and $\Ker(\dA) = \Ker(\dA)_{\fin} \oplus \Ker(\dA)_{\div}$ (\Cf \autoref{def:fin-by-div}). 
	As the target of the boundary map $\dA\colon K(k;A,\Gm) \to \Abar(\Fk)$ 
	is finite, 
	we obtain a short exact sequence 
	\[
	0\rightarrow \Ker(\dA)_{\fin}\rightarrow K(k;A,\Gm)_{\fin}\xrightarrow{\dA}\Abar(\Fk)\rightarrow 0.
	\]
	Take any $m\ge 1$ coprime to $p$. 
	For 
	$K(k;A,\Gm)_{\fin}$ and $\Abar(\Fk)$
	are finite, the exact sequences 
	\begin{gather*}
	0 \to K(k;A,\Gm)_{\fin}[m] \to K(k;A,\Gm)_{\fin}\xrightarrow{m} K(k;A,\Gm)_{\fin} \to K(k;A,\Gm)_{\fin}/m \to 0,\\
	0 \to \Abar(\Fk)[m] \to \Abar(\Fk)\xrightarrow{m} \Abar(\Fk) \to \Abar(\Fk)/m \to 0
	\end{gather*}
	induce 
	\begin{equation}\label{eq:star}
	K(k;A,\Gm)_{\fin}[m]\simeq K(k;A,\Gm)_{\fin}/m \stackrel{(\star)}{\simeq}\Abar(\Fk)/m\simeq\Abar(\Fk)[m],
	\end{equation}
	where the isomorphism 
	($\star$) follows from \cite[Proposition 2.6]{Hir21} 
	(for the case where $A$ is the Jacobian variety, \cite[Proposition 2.29]{Blo81}).  
	On the prime to $p$-torsion part 
	the boundary map $\dA$ gives an isomorphism $K(k;A,\Gm)\{p'\} \isomto \Abar(\Fk)\{p'\}$. 
	This implies that $\ker(\dA)_{\fin}$ is a $p$-primary torsion group.  
\end{proof}

Let $T_p(\A)^{\bullet} = \plim_{n}(\A[p^n]^{\bullet})$ be the $p$-adic Tate module of $\A[p^n]^{\bullet}$ for $\bullet \in \set{\circ, \emptyset, \et}$ 
(\Cf \eqref{seq:conn-et_A}) 
and write $T(\A)^{\bullet} = T_p(\A)^{\bullet} \times T'(\A)^{\bullet}$ with $T'(\A)^{\bullet} = \plim_{(m,p) = 1}\A[m]^{\bullet}$. 
From the following lemma, one can describe $\Ker(\dA)_{\fin}$ by using the exact sequence 
\[
(T_p(\A)^{\circ})_{G_k}\xrightarrow{\iota} T_p(A)_{G_k} \xrightarrow{\pi} (T_p(\A)^{\et})_{G_k} \to  0, 
\]
where $T_p(A) := \plim_{n}A[p^n]\simeq T_p(\A)$
(\Cf \eqref{seq:con-et_Gcoinv}).

\begin{lem}\label{bloch}
	Suppose that, for any $m\ge 1$, 
	the Galois symbol map $s_m:K(k;A,\Gm)/m\to H^2(k,A[m]\otimes\mu_m)$ is injective. 
	We have  $\Ker(\dA)_{\fin} \simeq \Im((T_p(\A)^{\circ})_{G_k} \xrightarrow{\iota} T_p(A)_{G_k})$. 
\end{lem}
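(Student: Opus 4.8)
The plan is to combine the finite-by-divisible structure of $K(k;A,\Gm)$ (\autoref{lem:dec}) with the factorization of $\dA$ displayed in \eqref{def:dG}. Write $K(k;A,\Gm) = K(k;A,\Gm)_{\fin}\oplus K(k;A,\Gm)_{\div}$. Since $K(k;A,\Gm)/m = K(k;A,\Gm)_{\fin}/m$ for every $m$, the canonical map $K(k;A,\Gm)\to\plim_m K(k;A,\Gm)/m$ is the projection onto the finite part and identifies $\plim_m K(k;A,\Gm)/m$ with $K(k;A,\Gm)_{\fin}$. Consequently the map $\plim_m s_m$ of \eqref{def:dG} factors through this projection, so that $K(k;A,\Gm)_{\div}\subseteq\Ker(\dA)$; since $K(k;A,\Gm)_{\div}$ is then exactly the maximal divisible subgroup of $\Ker(\dA)$, I would conclude $\Ker(\dA)_{\fin}\simeq\Ker\bigl(\dA\vert_{K(k;A,\Gm)_{\fin}}\bigr)$.

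Next I would feed in the hypothesis. Let $\Phi$ denote the composite $K(k;A,\Gm)_{\fin}\xrightarrow{\plim_m s_m}\plim_m H^2(k,A[m]\otimes\mu_m)\isomto T(A)_{G_k}$ of the first two arrows in \eqref{def:dG}. Because inverse limits are left exact, injectivity of every $s_m$ forces $\plim_m s_m$ to be injective, while it is surjective by \cite[Theorem 3.3]{Som90} (recalled after \eqref{def:dG}); together with the duality isomorphism these make $\Phi\colon K(k;A,\Gm)_{\fin}\isomto T(A)_{G_k}$ an isomorphism. Writing $\dA = \pi\circ\Phi$, where $\pi\colon T(A)_{G_k}\surj (T(\A)^{\et})_{G_k}\simeq\Abar(\Fk)$ is the surjection of \eqref{seq:con-et_Gcoinv} and \eqref{eq:TAet}, this yields $\Ker(\dA)_{\fin}\simeq\Ker(\pi)$.

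It then remains to identify $\Ker(\pi)$ and to pass to the $p$-adic Tate module. Right-exactness of \eqref{seq:con-et_Gcoinv} gives $\Ker(\pi) = \Im\bigl(\iota\colon (T(\A)^{\circ})_{G_k}\to T(A)_{G_k}\bigr)$. Finally, since $\Ahat$ has no nontrivial prime-to-$p$ torsion one has $\A[m]^{\circ}\simeq\Ahat[m] = 0$ for $m$ coprime to $p$ (\Cf \eqref{eq:conn part}); hence $T(\A)^{\circ} = T_p(\A)^{\circ}$ and the image of $\iota$ already lies in $T_p(A)_{G_k}$. Combining the displayed isomorphisms gives $\Ker(\dA)_{\fin}\simeq\Im\bigl(\iota\colon (T_p(\A)^{\circ})_{G_k}\to T_p(A)_{G_k}\bigr)$, as claimed.

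The difficulty here is bookkeeping rather than conceptual: the delicate point is to check that the finite-by-divisible decomposition is genuinely compatible with the inverse limit defining $\dA$, so that $\plim_m K(k;A,\Gm)/m$ is precisely the finite part and $\Phi$ kills nothing but the divisible part, and to ensure that the local Tate duality isomorphism $(\diamondsuit)$ is compatible with the connected-\'etale sequence \eqref{seq:con-et_Gcoinv}, so that the projection $\pi$ on $T(A)_{G_k}$ indeed corresponds to the reduction map onto $\Abar(\Fk)$ built into $\dA$.
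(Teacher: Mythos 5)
Your proof is correct, and its skeleton is the same as the paper's: identify $K(k;A,\Gm)_{\fin}\isomto T(A)_{G_k}$ via $\plim_m s_m$ (injective by the hypothesis and left-exactness of $\plim$, surjective by Somekawa), deduce $\Ker(\dA)_{\fin}\simeq\Ker\bigl(\pi\colon T(A)_{G_k}\to\Abar(\Fk)\bigr)$, and compute that kernel from the connected--\'etale sequence. The one place you genuinely diverge is the endgame. The paper reduces to the $p$-primary part by invoking \autoref{lem:l-part}~(ii): the isomorphisms $K(k;A,\Gm)/m\simeq\Abar(\Fk)/m$ for $(m,p)=1$ show that $\pi$ is an isomorphism on the prime-to-$p$ factor, and right-exactness of coinvariants is then applied to the $p$-part sequence. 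You instead apply right-exactness of coinvariants to the full sequence \eqref{seq:con-et_Gcoinv} to get $\Ker(\pi)=\Im(\iota)$ in one stroke, and then use $T(\A)^{\circ}=T_p(\A)^{\circ}$ (no prime-to-$p$ torsion in $\Ahat(\kbar)$, \Cf \eqref{eq:conn part}) to land in the $p$-part; this makes \autoref{lem:l-part}~(ii) unnecessary for this lemma and is a bit more economical. The one step you should state explicitly rather than leave to ``bookkeeping'' is that the natural map $T_p(A)_{G_k}\to T(A)_{G_k}$ is injective --- it is split, because the $G_k$-stable decomposition $T(A)=T_p(A)\oplus T'(A)$ passes to coinvariants --- since without this, the image of $(T_p(\A)^{\circ})_{G_k}$ in $T(A)_{G_k}$ need not be isomorphic to its image in $T_p(A)_{G_k}$; the paper records precisely this point when it notes that the middle vertical arrow of its comparison diagram ``splits.'' Finally, your first paragraph makes explicit a detail the paper passes over silently, namely why $\plim_m K(k;A,\Gm)/m$ is exactly $K(k;A,\Gm)_{\fin}$ and why $\dA$ kills the divisible part, which is what legitimizes restricting $\dA$ to the finite summand in the first place.
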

\begin{proof}
	(i) For any $m\in \Z_{\ge 1}$, 
	it follows from \cite[Theorem 3.3]{Som90} that 
	the Galois symbol map $s_m\colon K(k;A,\Gm)/m \to H^2(k,A[m]\otimes \mu_m)$ is surjective. 
	From the assumption, it is bijective.
	By taking the projective limit, 
	we obtain $\plim_m s_m=s_A\colon K(k;A,\Gm)_{\fin} \isomto T(A)_{G_k}$. 
	From the definition of the boundary map \eqref{def:dG}, 
	we have a commutative diagram 
	\[
	\xymatrix{
	K(k;A,\Gm)_{\fin} \ar@{->>}[d]^{\dA} \ar[r]^{s_A}_{\simeq}  & T(A)_{G_k}\ar@{->>}[d]^{\pi}\\
	\Abar(\Fk) \ar[r]^{\simeq}_{\eqref{eq:TAet}} &(T(\A)^{\et})_{G_k}.
	}
	\]
	This gives $\Ker(\dA)_{\fin} \simeq \Ker\left(T(A)_{G_k} \xrightarrow{\pi} \Abar(\Fk)\right)$. 
	Next, \autoref{lem:l-part} (ii) yields an isomorphism  %\cite[Proposition 2.6]{Hir21} 
	 $K(k;A,\Gm)/m\simeq \Abar(\Fk)/m$ for any $m\in \Z_{\ge 1}$ which is prime to $p$. 
%	(for the case $X$ has good reduction, see \cite{Blo81}, Proposition 2.29). 
	Thus, we have $T'(A)_{G_k} \isomto \plim_{(m,p) = 1} (\Abar(\Fk)/m)_{G_k}$ 
	and the following commutative diagram:
	\[
		\xymatrix{
		(T_p(\A)^{\circ})_{G_k}\ar@{=}[d] \ar[r]^{\iota}& T_p(A)_{G_k} \ar@{^{(}->}[d] \ar[r] & (T_p(\A)^{\et})_{G_k} \ar[r]\ar@{^{(}->}[d]&  0\,\\
		(T_p(\A)^{\circ})_{G_k}\ar[r] & T(A)_{G_k} \ar[r] & \Abar(\Fk) \ar[r] &  0.
		}
	\]
	Here, the first vertical map is the identity, the second is the natural inclusion induced by $T_p(A)\inj T(A)$ (which splits) and the third one is the composition $(T_p(\A)^{\et})_{G_k}\simeq \Abar(\Fk)\{p\}\inj \Abar(\Fk)$ (\cite[Remark 2.7]{Blo81}), 
	where $\Abar(\Fk)\{p\}$ is the $p$-primary torsion subgroup of $\Abar(\Fk)$. 
	Then, it is clear that 
	\[
		\Im((T_p(\A)^\circ)_{G_k}\xrightarrow{\iota} T_p(A)_{G_k}) = 
		\Im((T_p(\A)^\circ)_{G_k}\to T(A)_{G_k}) = \Ker(T(A)_{G_k}\xrightarrow{\pi} \Abar(\Fk)).
	\]
\end{proof}

\subsection*{Formal groups associated with abelian varieties}
In this paragraph, we give an upper bound  for the 
Mackey product $(\Ahat \otimes \Gm)(k)$ associated to $\Ahat$ and $\mathbb{G}_m$. 

\begin{lem}\label{lem:GH4.3}
	Let $k'/k$ be a finite tamely ramified extension. 
	Then, the norm map  
	\[	
		N_{k'/k}\colon (\Ahat \otimes \Gm ) (k') \surj (\Ahat\otimes \Gm)(k)
	\]
	is surjective.
\end{lem}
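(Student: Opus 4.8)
The plan is to reduce the statement to the surjectivity of a norm map on formal group points over a tame extension, and then establish that surjectivity via the filtration by higher unit groups and the formal logarithm.

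Since $N_{k'/k}$ is a homomorphism of abelian groups and $(\Ahat\otimes\Gm)(k)$ is generated by the symbols $\set{x,y}_{K/k}$ with $K/k$ finite, $x\in\Ahat(K)$ and $y\in K^{\times}$, it suffices to show that each such generator lies in $\Im(N_{k'/k})$. Fix such a symbol and set $K':=Kk'$, the compositum inside a fixed algebraic closure. Because $k'/k$ is tamely ramified, $k'$ lies in the maximal tamely ramified extension $k^{\mathrm{tame}}$; since tame extensions are stable under base change (that is, $K\cdot k^{\mathrm{tame}}\subseteq K^{\mathrm{tame}}$), the extension $K'=Kk'\subseteq K\cdot k^{\mathrm{tame}}$ is again tamely ramified over $K$. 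Moreover $k'\subseteq K'$, so by the norm formula \eqref{eq:norm map} every symbol of the form $\set{a,b}_{K'/k}$ lies in $\Im(N_{k'/k})$, as $\set{a,b}_{K'/k}=N_{k'/k}(\set{a,b}_{K'/k'})$.

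Next I exploit the projection formula. Applying the \PF relation of \autoref{def:otimesM} to the tower $k\subseteq K\subseteq K'$ with $\M=\Ahat$ and $\N=\Gm$ gives, for every $a\in\Ahat(K')$,
\[
\set{N_{K'/K}(a),\, y}_{K/k}=\set{a,\, \Res_{K'/K}(y)}_{K'/k}.
\]
By the previous paragraph the right-hand side lies in $\Im(N_{k'/k})$. Hence, provided $x$ can be written as $x=N_{K'/K}(a)$ for some $a\in\Ahat(K')$, the generator $\set{x,y}_{K/k}$ lies in $\Im(N_{k'/k})$. The lemma therefore follows once we prove the following: for a finite tamely ramified extension $L'/L$ of $p$-adic fields, the norm map $N_{L'/L}\colon \Ahat(\m_{L'})\to\Ahat(\m_L)$ is surjective.

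To prove this, I would split $L'/L$ into its maximal unramified subextension and a totally tamely ramified part, and work with the filtration $\Ahat(\m_L^n)$, whose graded pieces are $\Ahat(\m_L^n)/\Ahat(\m_L^{n+1})\cong F^{\oplus g}$ for $F$ the residue field of $L$. On a deep enough step (where $n>e_L/(p-1)$) the formal logarithm is an isomorphism intertwining the norm with the field trace, $\log\circ N_{L'/L}=\Tr_{L'/L}\circ\log$, so surjectivity of $N_{L'/L}$ there is equivalent to surjectivity of $\Tr_{L'/L}$ on the corresponding fractional ideals. For the unramified part this is clear since $\Tr_{F'/F}$ is surjective by separability of the residue extension; for the totally tamely ramified part of degree $e$ it holds because $p\nmid e$ forces $e\in\O_L^{\times}$, and a direct computation on the basis $1,\pi',\dots,(\pi')^{e-1}$ (all of whose nontrivial traces vanish) gives $\Tr_{L'/L}(\m_{L'})=e\,\m_L=\m_L$. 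Since $\Ahat(\m_L)$ and $\Ahat(\m_{L'})$ are complete for their filtrations and the induced maps on associated graded pieces are surjective, a successive-approximation argument upgrades this to surjectivity of $N_{L'/L}$ on all of $\Ahat(\m_{L'})$.

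The main obstacle is this last claim, and within it the totally tamely ramified case: one must track how the norm interacts with the two differently indexed filtrations on $\Ahat(\m_{L'})$ and $\Ahat(\m_L)$, and it is precisely the tameness hypothesis $p\nmid e$ that prevents the trace from absorbing a factor of $p$ and so guarantees surjectivity on the graded pieces. The reduction to the unramified and totally tame cases, together with the compatibility $\log\circ N_{L'/L}=\Tr_{L'/L}\circ\log$, are the technical ingredients that make the successive approximation go through.
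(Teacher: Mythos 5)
Your reduction is exactly the paper's own proof: take a generator $\set{x,a}_{K/k}$, observe that $Kk'/K$ is again tamely ramified, write $x=N_{Kk'/K}(\xi)$ for some $\xi\in\Ahat(Kk')$, and conclude with the projection formula \PF of \autoref{def:otimesM} together with the norm formula \eqref{eq:norm map}. The only difference is the key input, namely the surjectivity of $N_{L'/L}\colon\Ahat(\m_{L'})\to\Ahat(\m_L)$ for a tamely ramified extension $L'/L$: the paper simply quotes \cite[Proposition~3.9]{CG96} for this, while you attempt a direct proof, and it is this part of your argument that has a gap.

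Your unramified case is fine: there the norm agrees with the trace on every graded piece and $\Tr_{F'/F}$ is surjective because the residue extension is separable. In the totally tamely ramified case, however, the only comparison you offer between $N_{L'/L}$ and $\Tr_{L'/L}$ is via the formal logarithm, which is an isomorphism only on $\Ahat(\m_L^n)$ with $n>e_L/(p-1)$; at shallower levels you assert that ``the induced maps on associated graded pieces are surjective'' with no justification, and with the intrinsic filtrations on source and target this assertion is not even well posed: since $\Tr_{L'/L}(\m_{L'}^{i})=\m_L^{\lceil i/e\rceil}=\Tr_{L'/L}(\m_{L'}^{i+1})$ whenever $e\nmid i$, the trace does not induce well-defined surjective maps between graded pieces, and your successive approximation cannot get started --- already covering $\Ahat(\m_L)/\Ahat(\m_L^2)$ is unproved, because for $x$ of valuation $1$ in $L'$ the products of two conjugates entering $N_{L'/L}(x)-\Tr_{L'/L}(x)$ can have $v_L$ as small as $\lceil 2/e\rceil=1$, the same level as the trace itself. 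The gap can be closed in two ways. (a) Filter the source by the subgroups $\Ahat(\m_L^n\O_{L'})$ instead: for $x\in\Ahat(\m_L^n\O_{L'})$ the difference $N_{L'/L}(x)-\Tr_{L'/L}(x)$ is a sum of products of at least two conjugates of $x$, hence lies in $\m_L^{2n}\subseteq\m_L^{n+1}$, while tameness gives $\Tr_{L'/L}(\m_L^n\O_{L'})=\m_L^n\,\Tr_{L'/L}(\O_{L'})=e\,\m_L^n=\m_L^n$; with this bookkeeping your induction runs at every level $n\ge 1$. (b) Simpler, and the standard argument: for $x\in\Ahat(\m_L)\subset\Ahat(\m_{L'})$ all conjugates of $x$ coincide, so $N_{L'/L}(x)=[e](x)$, and since $p\nmid e$ the power series $[e](X)=eX+\cdots$ is an automorphism of $\Ahat$ over $\O_L$; hence $N_{L'/L}$ is surjective already when restricted to the subgroup $\Ahat(\m_L)$, with no filtration at all. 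Either repair, combined with your unramified step, recovers the statement the paper cites from Coates--Greenberg.
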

\begin{proof}
	Take any symbol of the form $\set{x,a}_{K/k}$ in 
	$(\Ahat  \otimes \Gm)(k)$. 
	For $Kk'/K$ is also tamely ramified, % (\cite{Neu90}, Chapter~II, Corollary~7.8), 
	there exists $\xi\in \Ahat(K)$ such that $N_{Kk'/K}(\xi) = x$ 
	(\cite[Proposition~3.9]{CG96}). 
	The \emph{projection formula}, 
	that is, the relation \PF defining the Mackey product in \autoref{def:otimesM}, 
	yields 
	\[
	\set{x,a}_{K/k} = \set{N_{Kk'/K}(\xi), a}_{K/k} \stackrel{(\mathbf{PF})}{=} \set{ \xi,\Res_{Kk'/K}(a)}_{Kk'/k} 
	\stackrel{\eqref{eq:norm map}}= N_{k'/k}(\set{\xi,\Res_{Kk'/K}(a)}_{Kk'/k'}).
	\]
	These equations imply the assertion.
\end{proof}

In the same way as in \autoref{def:symbol}, for any $n\ge 1$, 
we define the Galois symbol map  
\begin{equation}
	\label{def:sAhat}
	s_{p^n}:= s_{p^n,k}\colon (\Ahat \otimes \Gm)(k)/p^n \to H^2(k,\Ahat[p^n] \otimes \mu_{p^n}) 
\end{equation}
by $s_{p^n}(\set{x,a}_{K/k}) = \Cor_{K/k}(\delta_{\Ahat}(x)\cup \delta_{\Gm}(a))$, 
where $\delta_{\Ahat}:\Ahat(K)/p^n \inj H^1(K,\Ahat[p^n])$ is the Kummer map. 
This map is well-defined by properties of the cup product (\cite[Proposition 1.5.3]{NSW08}). 

\begin{prop}
\label{prop:Hir20}
We assume $\Ahat[p]\subset \Ahat(k)$, $\mu_p\subset k$, and $\Abar[p] \subset \Abar(\F_k)$.
\begin{enumerate}
	\item There is an isomorphism 
	$\Ahat/p \simeq \Ubar^{\oplus g}$ 
	of  Mackey functors over $k$, 
	where $\Ubar$ is the sub Mackey of $\Gm/p$ defined by 
	\begin{equation}\label{def:Ubar}
		\Ubar(K) := \Ubar_K := \Im(U_K\to K^{\times}/p)  =  U_K/p.
	\end{equation}

	\item For any $n\ge 1$, 
	the Galois symbol map  
	\[
		s_{p^n}\colon (\Ahat \otimes \Gm)(k)/p^n \to H^2(k,\Ahat[p^n] \otimes \mu_{p^n}) 
	\]
	defined in \eqref{def:sAhat} is  bijective.
\end{enumerate}
\end{prop}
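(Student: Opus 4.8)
The plan is to prove the two parts in sequence, using (i) to transport the symbol computation in (ii) to the case of the formal multiplicative group $\Gmhat$.

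For (i), I would first exploit that good \emph{ordinary} reduction forces the $p$-divisible group $\A[p^\infty]^\circ = \Ahat[p^\infty]$ to be of multiplicative type, so that the connected part $\Ahat[p]=\A[p]^\circ$ appearing in \eqref{seq:conn-et_A2} is, as a $G_k$-module, a twist of $\mu_p^{\oplus g}$ by an unramified character. The hypothesis $\Ahat[p]\subset\Ahat(k)$ says the $G_k$-action on $\Ahat[p]$ is trivial, and since $\mu_p\subset k$ this gives $\Ahat[p]\simeq\mu_p^{\oplus g}$ as $G_k$-modules (the condition $\Abar[p]\subset\Abar(\Fk)$ being the Weil-pairing dual of this, so that all three hypotheses are mutually consistent). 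Kummer theory for $\Gm$ then identifies $H^1(K,\Ahat[p])\simeq H^1(K,\mu_p)^{\oplus g}\simeq(\Kt/p)^{\oplus g}$ for every finite $K/k$, and the crux is to show that the Kummer map $\delta_{\Ahat}\colon\Ahat(K)/p\inj H^1(K,\Ahat[p])$ has image exactly $\Ubar_K^{\oplus g}$ (see \eqref{def:Ubar}). This is the abelian-variety analogue of the elementary fact $\Gmhat(\mK)/p=(1+\mK)/p\isomto U_K/p=\Ubar_K$, which holds because the prime-to-$p$ cokernel $U_K/U_K^{(1)}\simeq\FK^{\times}$ disappears after $/p$; I would run this on each of the $g$ multiplicative factors and check compatibility with the norm and restriction maps, so that the pointwise identifications assemble into an isomorphism $\Ahat/p\simeq\Ubar^{\oplus g}$ of Mackey functors.

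Turning to (ii), I would first settle $n=1$. The symbol $s_p$ of \eqref{def:sAhat} factors through $(\Ahat/p\otimes\Gm/p)(k)$, which by (i) is $(\Ubar\otimes\Gm/p)(k)^{\oplus g}$, so $s_p$ becomes $g$ copies of the Galois symbol $(\Ubar\otimes\Gm)(k)/p\to H^2(k,\mu_p\otimes\mu_p)$. As $\mu_p\subset k$ we have $H^2(k,\mu_p^{\otimes2})\simeq H^2(k,\mu_p)\simeq\Z/p$, so the target has order $p^g$, and each factor is essentially the (wild) Hilbert symbol mod $p$, whose nondegeneracy on units is a consequence of local class field theory. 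To conclude bijectivity I would combine this surjectivity with the upper bound $|(\Ahat\otimes\Gm)(k)/p|\le p^g$ coming from the tame-norm surjectivity of \autoref{lem:GH4.3} (an equality of finite orders together with surjectivity forces $s_p$ to be an isomorphism).

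For general $n$ I would induct, comparing $s_{p^{n+1}}$, $s_{p^n}$ and $s_p$ through the short exact sequences $0\to\mu_{p^n}\to\mu_{p^{n+1}}\to\mu_p\to0$ and $0\to\Ahat[p^n]\to\Ahat[p^{n+1}]\to\Ahat[p]\to0$: tensoring these, taking $H^2(k,-)$, and matching them against the corresponding filtration of $(\Ahat\otimes\Gm)(k)/p^{\bullet}$ produces a ladder to which the five lemma applies with the base case above, the finiteness statements of \autoref{lem:l-part} guaranteeing that the ambient groups are finite of comparable order. The hard part will be exactly this inductive step, because the rationality hypotheses are imposed only modulo $p$: for $n>1$ neither $\mu_{p^n}\subset k$ nor triviality of the Galois action on $\Ahat[p^n]$ holds, so the coefficient module $\Ahat[p^n]\otimes\mu_{p^n}$ is genuinely twisted. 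Verifying that the symbol maps are compatible with the connecting homomorphisms of both filtrations, and that the graded pieces really do reduce to the $n=1$ computation, is the technical heart of the argument, and is where \autoref{lem:GH4.3} and local Tate duality (\cite[Theorem 7.2.6]{NSW08}) must be combined most carefully.
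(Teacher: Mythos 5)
Your outline follows the same architecture as the paper's proof (trivialize $\Ahat[p]\simeq(\mu_p)^{\oplus g}$, identify $\Ahat/p$ with $\Ubar^{\oplus g}$, settle $n=1$ by reduction to symbols of units, then induct), but at both decisive points the mechanism you propose does not work. In (i), running the computation $\Gmhat(\mK)/p\isomto U_K/p$ ``on each of the $g$ multiplicative factors'' presupposes a splitting $\Ahat\simeq\Gmhat^{\oplus g}$ over $\Ok$. Mazur's theorem (\cite[Lemmas 4.26--4.27]{Maz72}) gives such a splitting only over $\O_{\kur}$, and the hypotheses of the proposition constrain only the $p$-torsion: the unramified twist by which $\Ahat$ differs from $\Gmhat^{\oplus g}$ need only be trivial mod $p$ (think of a nontrivial Serre--Tate parameter), so no such factors exist over $\Ok$ in general. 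What the unramified splitting genuinely yields is the containment $\Im(\delta_K)\subset(\Ubar_K)^{\oplus g}$ --- this is exactly the paper's first step, comparing $\delta_K$ with $\delta_{K^{\ur}}$ via the valuation map. The reverse inclusion is a separate counting step that your proposal omits entirely: one must bound $\#\Ahat(K)/p$ below by $p^{g([K:\Qp]+1)}=\#(\Ubar_K)^{\oplus g}$, which the paper does using Mattuck's theorem, the ordinariness of $\Abar$, and the exact sequence $\Ahat(K)/p\to A(K)/p\to\Abar(\FK)/p\to 0$, concluding by equality of orders.

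In (ii) the base case $n=1$ rests on a step that fails: \autoref{lem:GH4.3} cannot produce the bound $\#(\Ahat\otimes\Gm)(k)/p\le p^g$. That lemma asserts surjectivity of norm maps down from tamely ramified extensions; surjectivity of norms never bounds a Mackey product from above, and the whole difficulty is that $(\Ubar\otimes\Gm/p)(k)$ is generated by symbols $\set{x,a}_{K/k}$ over arbitrary (including wildly ramified) finite extensions $K/k$. The injectivity of $s_1$ is precisely the content of \cite[Lemma 4.2.1]{RS00} (the composite $(\Ubar\otimes\Gm/p)(k)\to(\Gm/p\otimes\Gm/p)(k)\to H^2(k,\mu_p^{\otimes 2})$ is bijective), which the paper cites and for which your argument has no substitute. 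Finally, the inductive step, which you yourself flag as the technical heart, is left as a sketch; note that the paper's induction (after \cite[Lemma 4.2.2]{RS00}) is not a five-lemma argument --- the top row $\Ahat[p]\otimesZ\kt\to(\Ahat\otimes\Gm)(k)/p^{n-1}\to(\Ahat\otimes\Gm)(k)/p^{n}\to(\Ahat\otimes\Gm)(k)/p\to 0$ is not left exact --- and its key input is the surjectivity of $\phi\colon\Ahat[p]\otimesZ\kt\to H^1(k,\Ahat[p]\otimes\mu_p)$, which uses $\mu_p\subset k$ and the fixed trivialization of $\Ahat[p]$; local Tate duality, which you invoke at this point, plays no role there.
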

The isomorphism $\Ahat/p \simeq \Ubar^{\oplus g}$ in the assertion (i) is not canonical 
and depends on the choice of an isomorphism 
$\Ahat[p] \simeq (\mu_p)^{\oplus g}$ of (trivial) Galois modules. 
The proof of the above proposition essentially follows from \cite[Section~4]{Hir21},
but the assumptions are weakened slightly. 

\begin{proof}[Proof of \autoref{prop:Hir20}]
	(i) We fix an isomorphism 
	$\Ahat[p] \simeq (\mu_p)^{\oplus g}$ 
	of Galois modules.
	This induces the bijection ($\clubsuit$) below 
	\[
	\delta_K\colon  \Ahat(K)/p\stackrel{\delta_{\Ahat}}{\inj} H^1(K,\Ahat[p]) \stackrel{(\clubsuit)}{\simeq} H^1(K,\mu_p)^{\oplus g} \xleftarrow{\simeq} \left(K^{\times}/p\right)^{\oplus g}
	\]
	for any finite extension $K/k$. 
	Here, the last map is the Kummer map on $\Gm$ (\Cf \eqref{def:Kummer}) 
	which is bijective from ``Hilberts Satz 90''.
	First, we show 	
		$\Im(\delta_{K}) \subset (\Ubar_{K})^{\oplus g}$.
	Consider the following commutative diagram: 
	\[
	\xymatrix{
	\Ahat(K)/p \ar[r]^-{\delta_K}\ar[d] & (\Kt/p)^{\oplus g} \ar[r]^-{v}\ar[d]^-{\iota} & (\Z/p)^{\oplus g}\ar[d]^-{\Id}   \\ 
	\Ahat(K^{\ur})/p\ar[r]^{\delta_{K^{\ur}}} & \left((K^{\ur})^{\times}/p\right)^{\oplus g} \ar[r]^-{v}  & (\Z/p)^{\oplus g},
	}
	\]
	where $K^{\ur}$ is the completion of the maximal unramified extension of $K$, 
	and $v$ is the valuation map. 
	Since we have $\Ahat\otimes_{\Ok}O_{k^{\ur}} \simeq (\widehat{\mathbb{G}}_m)^{\oplus g}$ (\cite[Lemma~4.26, Lemma~4.27]{Maz72}), 
	$\Ahat(K^{\ur})/p \simeq (\Ubar_{K^{\ur}})^{\oplus g}$ and 	
	the composition $v\circ \delta_{K^{\ur}} = 0$ in the above diagram. 
	Thus, the composition 
	$v\circ \delta_K = 0$ 
	in the top sequence and hence $\Im(\delta_{K}) \subset 
	(\Ubar_{K})^{\oplus g}$. 
	From the structure theorem of the multiplicative group $K^{\times}$, 
	we have  $U_K/p \simeq (\Z/p)^{\oplus ([K:\Qp]+1)}$ 
	and hence $\# (\Ubar_K)^{\oplus g}=  \{\#(U_K/p)\}^g = p^{g([K:\Qp]+1)}$. 
	%	For, the group $\Ubar_K$ has a filtration $\{\overline{U}_K^i\:=\img(U_K^i\to K^\times/p)\}_{i\geq 1}$ with successive quotients described in \cite[Lemma 3.4]{Gazaki/Hiranouchi2021}. 
	It is enough to show $\#\Ahat(K)/p \ge p^{g([K:\Qp]+1)}$. 
	
	Mattuck's theorem (\cite{Mat55}) and $\#A(K)[p]= p^{2g}$ imply 
	$\#A(K)/p = p^{g([K:\Qp]+2)}$. 
	Recall that 
	$\Abar$ has ordinary reduction 
	so that 	$\Abar [p] \simeq (\Z/p)^{\oplus g}$. 
	The exact sequence 
	\[
	\Ahat(K)/p \to A(K)/p \to \Abar(\F_K)/p \to 0
	\]
	and the equality 
	$\#\Abar(\F_K)/p = \#\Abar(\F_K)[p]$ 
	imply the inequality 
	$\#\Ahat(K)/p\ge p^{g([K:\Qp]+1)}$.
	The map $\delta_{K}\colon \Ahat(K)/p \isomto (\Ubar_K)^{\oplus g}$ is bijective.
	
	\sn 
	(ii)
	For each $n\in \Z_{\ge 1}$, 
	to simplify the notation, 
	we put $\M_n := (\Ahat \otimes \Gm )(k)/p^n$, 
	$\H_n := H^2(k,\Ahat[p^n]\otimes \mu_{p^n})$ 
	and  
	$s_n:= s_{p^n}\colon  \M_n\to \H_n$.  
	We will show by induction that $s_n$ is bijective. 
	First, we show that $s_1\colon \M_1 \to \H_1$ is bijective. 
	As in the proof of (i) above, we fix an isomorphism $\Ahat[p]\simeq (\mu_p)^{\oplus g}$ of Galois modules
	and hence we obtain  
	\begin{equation}\label{eq:H1}
		\H_1 = H^2(k,\Ahat[p]\otimes \mu_p) \simeq H^2(k,\mu_p^{\otimes 2})^{\oplus g}. 
	\end{equation}
	By (i), there is an isomorphism  $\Ahat/p \simeq \Ubar^{\oplus g}$. 
	For the Mackey product commutes with the direct sum, 
	\begin{equation}\label{eq:M1}
		\M_1 \simeq (\Ahat/p\otimes \Gm/p)(k) \simeq (\Ubar \otimes \Gm/p)(k)^{\oplus g}.
	\end{equation}
	The natural inclusion 
	$\Ubar \hookrightarrow \Gm/p$,  
	induces the following commutative diagram:
	\[	
	\xymatrix@C=5mm{
		\M_1\ar[d]^{\simeq}_{\eqref{eq:M1}}\ar[rr]^-{s_{1}} & & \H_1 \ar[d]_{\eqref{eq:H1}}^-{\simeq} \\
	(\Ubar\otimes \Gm/p)(k)^{\oplus g}\ar[r] & (\Gm/p\otimesM \Gm/p)(k)^{\oplus g}\ar[r]^-{(s_p)^{\oplus g}} &H^2(k,\mu_p^{\otimes 2})^{\oplus g}.
	 }
	\]
	Here, the map $s_p$ in the bottom is 
	the Galois symbol map associated to two $\Gm$.  
	In fact, the composition 
	 $(\Ubar\otimes \Gm/p)(k) \to (\Gm/p\otimesM \Gm/p)(k) \xrightarrow{s_p} H^2(k,\mu_p^{\otimes 2})$ is 
	bijective (\cite[Lemma 4.2.1]{RS00}, see also \cite[Lemma~4.5]{Hir21})
	and so is $s_1\colon \M_1\to \H_1$.
		 
	Next, we consider the following commutative diagram with exact rows 
	except possibly at $\M_{n-1}$: 
	\[
		\xymatrix@C=3mm@R=0mm{
		\Ahat[p] \otimesZ \kt \ar[rr]^-{\psi}\ar[dd]^{\phi} & 
			& \M_{n-1} \ar[rr]\ar[dd]^{s_{n-1}}_{\simeq}   & 
			& \M_{n} \ar[rr]\ar[dd]^-{s_{n}} & & \M_1\ar[dd]_{\simeq}^-{s_{1}}\ar[rr] &  & 0 \\
			& (\diamondsuit) & \\
		H^1(k, \Ahat[p] \otimes \mu_{p}) \ar[rr] & & \H_{n-1} \ar[rr] &
			& \H_{n} \ar[rr] & & \H_1 \ar[rr] & & 0
	}
	\]
	(\Cf \cite[Lemma 4.2.2]{RS00}), 
	where the bottom sequence is induced from 
	\[
	0 \to \Ahat[p^{n-1}]\otimes \mu_{p^n} \to \Ahat[p^{n}]\otimes \mu_{p^n} \to \Ahat[p]\otimes \mu_p\to 0. 
	\]
	% The Tate twist $\otimes \otimes_{\Zp} \Z_p(1)$ is exact, and $\Ahat[p^n](1) = \Ahat[p^n]\otimes \mu_{p^n}$.
	Here, the far left vertical map $\phi$ is given by 
	\[
		\Ahat[p] \otimesZ \kt \xrightarrow{\Id\otimes \delta} 
			H^0(k,\Ahat[p])\otimesZ H^1(k,\mu_p) \onto{\cup} H^1(k,\Ahat[p]\otimes \mu_p)
	\]
	and $\psi$ is induced from $\Ahat[p]\inj \Ahat(k) \surj \Ahat(k)/p^{n-1}$: 
	$\psi(w\otimes a) := \set{w,a}_{k/k}$ for $w\otimes a \in \Ahat[p] \otimes \kt$. 
	The commutativity of the square ($\diamondsuit$) follows from 
	a property of the cup product (\Cf \cite[Proposition 1.4.3 (i)]{NSW08}).
	By the fixed isomorphism $\Ahat[p]\simeq (\mu_p)^{\oplus g}$ 
	of trivial Galois modules, 
	the map $\phi$ becomes 
	\[
		\Ahat[p] \otimesZ \kt  \surj (\mu_p \otimesZ \kt/p)^{\oplus g} \simeq 
			H^1(k,\mu_p^{\otimes 2})^{\oplus g} \simeq H^1(k,\Ahat[p]\otimes \mu_p).
	\]
	In particular, $\phi$ is surjective. 
	From the inductive hypothesis, $s_{n-1}$ is bijective and hence $s_n$ is surjective. 
	From the diagram chase and the induction hypothesis, 
	$s_{n}$ is injective. 	
\end{proof}

\begin{thm}
\label{thm:GmA}
	For any $n\ge 1$, 
	there is a surjective homomorphism 
	\[
		(\Z/p^{\Mur})^{\oplus g} \surj  (\Ahat \otimes \Gm)(k)/p^n,
	\] 
	where $\Mur = \max\set{m \ge 0 | \mu_{p^m}\subset k^{\ur}}$.
\end{thm}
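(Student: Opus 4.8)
The plan is to compute $(\Ahat \otimes \Gm)(k)/p^n$ by lifting the whole problem to a finite unramified extension $k'/k$ over which the hypotheses of \autoref{prop:Hir20} are available, and then transporting the resulting bound back down by a norm map. The point that makes this work is that a finite unramified extension is tamely ramified, so \autoref{lem:GH4.3} supplies a surjection $N_{k'/k}\colon (\Ahat \otimes \Gm)(k') \surj (\Ahat \otimes \Gm)(k)$, which stays surjective after reducing modulo $p^n$. Hence it suffices to construct a surjection $(\Z/p^{\Mur})^{\oplus g} \surj (\Ahat \otimes \Gm)(k')/p^n$ for a well-chosen $k'$.

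First I would choose $k' \subset \kur$ finite unramified, large enough that (a) $\mu_{p^{\Mur}} \subset k'$, (b) the isomorphism $\Ahat \otimes_{\Ok}\O_{k'} \simeq (\Gmhat)^{\oplus g}$ of \cite[Lemma~4.26, Lemma~4.27]{Maz72} is already defined over $\O_{k'}$, and (c) $\Abar[p] \subset \Abar(\Fk')$ (here I abbreviate $\Fk' := \F_{k'}$). Since $\mu_{p^{\Mur+1}} \not\subset \kur \supseteq k'$, the root-of-unity level of $k'$ is exactly $\Mur$, i.e.\ $\max\set{m | \mu_{p^m} \subset k'} = \Mur$; this is precisely why I insist $k' \subset \kur$. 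Assuming first $\Mur \ge 1$, we get $\mu_p \subset k'$, and then (b) forces $\Ahat[p] \simeq \mu_p^{\oplus g} \subset \Ahat(k')$. Now \autoref{prop:Hir20}(ii), applied to $A$ over $k'$, gives $(\Ahat \otimes \Gm)(k')/p^n \isomto H^2(k', \Ahat[p^n] \otimes \mu_{p^n})$. Using (b) once more, $\Ahat[p^n] \simeq \mu_{p^n}^{\oplus g}$ as $G_{k'}$-modules, so the target is $H^2(k', \mu_{p^n}^{\otimes 2})^{\oplus g}$. A local Tate duality computation (\cite[Theorem 7.2.6]{NSW08}) identifies $H^2(k', \mu_{p^n}^{\otimes 2})$ with the Pontryagin dual of $(\Z/p^n(-1))^{G_{k'}} \simeq \Z/p^{\min(n,\Mur)}$, whence $(\Ahat \otimes \Gm)(k')/p^n \simeq (\Z/p^{\min(n,\Mur)})^{\oplus g}$. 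Composing the evident surjection $(\Z/p^{\Mur})^{\oplus g} \surj (\Z/p^{\min(n,\Mur)})^{\oplus g}$ with $N_{k'/k}$ yields the assertion.

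It remains to treat $\Mur = 0$, where $(\Z/p^{\Mur})^{\oplus g} = 0$ so the claim is that $(\Ahat \otimes \Gm)(k)/p^n$ vanishes; here \autoref{prop:Hir20} is unavailable since $\mu_p \not\subset k'$ for every unramified $k'$. I would instead pick $k' \subset \kur$ satisfying (b) and (c) and prove $p$-divisibility directly. By (b), $(\Ahat \otimes \Gm)(k')/p \simeq ((\Gmhat \otimes \Gm)(k')/p)^{\oplus g}$, and since $\Gmhat/p \simeq \Ubar$ holds canonically (as $\Gmhat(K)/p = (1+\m_K)/p = U_K/p$) with $\Gmhat[p] = \mu_p$, the computation $(\Ubar \otimes \Gm/p)(k') \isomto H^2(k',\mu_p^{\otimes 2})$ of \cite[Lemma 4.2.1]{RS00} applies with no further hypotheses. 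As $\mu_p \not\subset k'$ forces $H^2(k',\mu_p^{\otimes 2}) = 0$, we obtain $(\Ahat \otimes \Gm)(k')/p = 0$; hence $(\Ahat \otimes \Gm)(k')$ is $p$-divisible, so $(\Ahat \otimes \Gm)(k')/p^n = 0$, and the norm surjection gives $(\Ahat \otimes \Gm)(k)/p^n = 0$.

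The step I expect to demand the most care is guaranteeing that all the hypotheses of \autoref{prop:Hir20} can be realized at a finite unramified level while keeping the root-of-unity level equal to $\Mur$: one must verify that Mazur's isomorphism really descends from the unramified closure to some finite $k' \subset \kur$ and simultaneously arrange $\Abar[p] \subset \Abar(\Fk')$, all without enlarging $\max\set{m | \mu_{p^m} \subset k'}$ beyond $\Mur$. The other delicate point is the $\Mur = 0$ case, where bijectivity of the Galois symbol is not provided by \autoref{prop:Hir20}; the resolution above hinges on the observation that the mod-$p$ computation through $\Gmhat/p \simeq \Ubar$ and \cite[Lemma 4.2.1]{RS00} requires neither $\mu_p \subset k'$ nor \autoref{prop:Hir20}, converting the vanishing of $H^2(k',\mu_p^{\otimes 2})$ into the desired $p$-divisibility.
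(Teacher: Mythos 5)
Your overall strategy (push everything up to a finite unramified extension via \autoref{lem:GH4.3}, compute there with \autoref{prop:Hir20}, and push back down by the norm) is exactly the paper's, but your proof hinges on condition (b), and (b) is false: Mazur's isomorphism $\Ahat\otimes_{\Ok}\O_{\kur}\simeq(\Gmhat)^{\oplus g}$ \emph{never} descends to a finite unramified level. Indeed, $\Ahat\otimes_{\Ok}\O_{k'}\simeq(\Gmhat)^{\oplus g}$ would give $\Ahat[p^n]\simeq(\mu_{p^n})^{\oplus g}$ as $G_{k'}$-modules for \emph{all} $n$; by Cartier duality (the Weil pairing) this says that all $p$-power torsion of the reduction of the dual abelian variety is $\F_{k'}$-rational, which is absurd since that group is finite. (Equivalently: Frobenius acts on the character group of $\Ahat[p^{\infty}]$ through the unit-root eigenvalues of Frobenius on the reduction, and these are Weil numbers, never roots of unity.) This is precisely why the paper only ever uses torsion-level shadows of Mazur's theorem: after one finite unramified extension it assumes $\Ahat[p]\simeq(\mu_p)^{\oplus g}$, and, later and separately, after another one it assumes $\Ahat[p^{M}]\simeq(\mu_{p^{M}})^{\oplus g}$. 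In your case $\Mur\ge 1$ the damage is repairable, because you use (b) only through the single isomorphism $\Ahat[p^n]\simeq(\mu_{p^n})^{\oplus g}$ of $G_{k'}$-modules for the one fixed $n$ in the statement, and this can be arranged by choosing $k'\subset\kur$ \emph{depending on $n$}. With that repair, your endgame $(\Ahat\otimes\Gm)(k')/p^n\simeq H^2(k',\mu_{p^n}^{\otimes 2})^{\oplus g}\simeq(\Z/p^{\min(n,\Mur)})^{\oplus g}$ is correct and actually shortcuts the paper's argument, which instead stabilizes the quotients $(\Ahat\otimes\Gm)(k)/p^{n}$ for $n\ge M$ by a Hilbert-symbol generation claim and only then computes at level $M$.

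The case $\Mur=0$ is where the gap is genuine and not repairable along your lines. There you need the Mackey-functor identification $\Ahat/p\simeq\Ubar^{\oplus g}$ over $k'$ (so that \cite[Lemma 4.2.1]{RS00} applies), and this is a statement about \emph{points}, not torsion: a $G_{k'}$-isomorphism $\Ahat[p]\simeq(\mu_p)^{\oplus g}$ does not identify the mod-$p$ points functors of $\Ahat$ and $\Gmhat^{\oplus g}$, since these formal groups are genuinely non-isomorphic over $\O_{k'}$ by the first paragraph. The only available tool for producing $\Ahat/p\simeq\Ubar^{\oplus g}$ is \autoref{prop:Hir20}~(i), whose proof (the Mattuck count of $\#A(K)/p$, which needs $A[p]\subset A(K)$) requires $\mu_p\subset k'$ --- unavailable for every unramified $k'$ when $\Mur=0$. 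The paper handles $M=0$ by a different idea: it passes to the \emph{tamely ramified} extension $K=k(\mu_p)$, of degree prime to $p$, where \autoref{prop:Hir20}~(ii) does apply; then $\Res_{K/k}$ is injective on $(\Ahat\otimes\Gm)(k)/p$ (norm after restriction is multiplication by $[K:k]$), so the Galois symbol over $k$ is injective, and $H^2(k,\mu_p^{\otimes 2})\simeq K_2^M(k)/p=0$ forces $(\Ahat\otimes\Gm)(k)/p=0$, hence $p$-divisibility. You need this restriction-injectivity idea (or a substitute) to close the $\Mur=0$ case.
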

\begin{proof}	
	For any finite unramified extension $k'/k$, 
	the norm map 
	$(\Ahat \otimes \Gm)(k')  \to (\Ahat \otimes \Gm)(k)$ 
	is surjective (\autoref{lem:GH4.3}). 
	We may assume $M^{\ur} = M := \max\set{m\ge 0 | \mu_{p^m}\subset k}$.  
	We have a short exact sequence $0 \to \Ahat[p] \to A[p]\to \Abar[p] \to 0$ 
	by \cite[Theorem C.2.6]{HS00}. 
	Mazur's theorem $\Ahat\otimes_{\Ok}O_{k^{\ur}} \simeq (\widehat{\mathbb{G}}_m)^{\oplus g}$ (\cite[Lemma~4.26, Lemma~4.27]{Maz72}) 
	indicates that,  
	by replacing $k$ with a finite unramified extension, 
	the above sequence becomes 
	$0 \to (\mu_p)^{\oplus g} \rightarrow A[p]\rightarrow (\Z/p)^{\oplus g}\rightarrow 0$ 
	as $G_k$-modules. 
	In particular, we have  $\Abar[p]\subset \Abar(\F_k)$. %\magenta 
	All the assumptions in \autoref{prop:Hir20} are satisfied. 
	In the following, we put $K = k(\mu_p)$.

	\sn 
	\textbf{The case $M=0$:}
	First, we consider the case $M = 0$ and show $(\Ahat \otimes \Gm)(k)/p  = 0$. 
	This implies that $(\Ahat \otimes \Gm)(k)$ is $p$-divisible 
	so that $(\Ahat \otimes \Gm)(k)/p^n = 0$ for any $n \ge 1$. 
	The assumption $M=0$ implies $\mu_p\not\subset k$ and $k\subsetneqq K$. 
	Using $\Ahat[p] \simeq (\mu_p)^{\oplus g}$, 
	the Galois symbol map defined in \eqref{def:sAhat} is of the form: 
	\[
	s_p\colon  (\Ahat \otimes \Gm)(k)/p  \to H^2(k,\Ahat[p]\otimes \mu_p) \simeq H^2(k,\mu_p^{\otimes 2})^{\oplus g}.
	\]
	Since we have $H^2(k,\mu_p^{\otimes 2}) \simeq K_2^M(k)/p = 0$ 
	(\Cf \cite[Chapter IX, Proposition 4.2]{FV02}), 
	it is left to show that the Galois symbol map 
	$s_p$ is injective. 
	The extension degree of $K = k(\mu_p)/k$ is prime to $p$.  
	The composition 
	\[
		(\Ahat \otimes \Gm)(k)/p \xrightarrow{\Res_{K/k}} 
		(\Ahat \otimes \Gm)(K)/p \xrightarrow{N_{K/k}} (\Ahat \otimes \Gm)(k)/p
	\]
	is the multiplication by $[K:k]$ and is bijective. 
	The restriction $\Res_{K/k}\colon (\Ahat \otimes \Gm)(k)/p \to (\Ahat \otimes \Gm)(K)/p $ is injective. 
	Consider the following commutative diagram: 
	\[
	\xymatrix{
	(\Ahat \otimes \Gm)(k)/p  \ar@{^{(}->}[d]_-{\Res_{K/k}}\ar[r]^{s_p} & H^2(k,\mu_p^{\otimes 2})^{\oplus g}\ar[d]^{\Res_{K/k}} \\ 
	(\Ahat \otimes \Gm)(K)/p  \ar[r]_-{\simeq}^{s_{p,K}} & H^2(K,\mu_p^{\otimes 2})^{\oplus g}.
	}
	\]
	Here, the Galois symbol map $s_{p,K}$ is bijective from \autoref{prop:Hir20} (ii).
	%and $(\Ubar\otimes \Gm/p)(K) \simeq K_2^M(K)/p \simeq H^2(K,\mu_p^{\otimes 2})$ 
	%(\cite[Lemma 4.2.1]{RS00}).
	From the diagram above, the Galois symbol map 
	$s_p$ is injective. 
	We obtain $(\Ahat \otimes \Gm)(k)/p^n =0$.

	\sn
	\textbf{The case $M>0$:}
	Next, consider the case $M>0$. 
	In this case, $K=k$. 
	Fix $\zeta\in \mu_{p^{M}}$ a primitive $p^{M}$-th root of unity. 
	In the following, we show the following claim: 

	\begin{oneclaim}
		$(\Ahat \otimes \Gm)(k)/p$ is generated by 
		symbols of the form $\set{ w,\zeta}_{k/k}$ for some $w\in \Ahat(k)$.
	\end{oneclaim} 
	\begin{proof}
		Recall that the Hilbert symbol 
		$(-,-)_p\colon k^{\times}\otimes k^{\times} \to \mu_p\simeq \Z/p$ satisfies  
		\begin{equation}
		\label{eq:H}
		(y,x)_p = 0
		\Leftrightarrow y \in N_{k(\sqrt[p]{x}\,)/k}\left(k(\sqrt[p]{x}\,)^{\times}\right),\quad \mbox{for $x,y\in k^{\times}$}
			\end{equation}
		(\Cf \cite[Proposition~4.3]{Tat76}). 
		From the very definition of $M$ and $M = \Mur$, the extension $L := k(\mu_{p^{M+1}})/k$ 
		is non-trivial, and totally ramified. 
		We have $U_k/N_{L/k}U_L \simeq k^{\times}/N_{L/k}L^{\times}$ (\Cf the proof of \cite[Section~V.3,\ Corollary~7]{Ser68}) and 
		local class field theory says 
		$k^{\times}/N_{L/k}L^{\times} \simeq \Gal(L/k)\neq 0$  (\Cf \cite[Section~XIII.3]{Ser68}). 
		Thus, there exists $y\in U_k \ssm N_{L/k}U_L$ such that 
		$(y,\zeta)_p \neq 0$ from \eqref{eq:H}. 
		As $(y,\zeta)_p\neq 0$, the chosen element $y$  induces a non-trivial element in $\Ubar_k = U_k/p$. We use the same notation $y$ 
		for this induced element in $\Ubar_k$. 
		For each $1\le i \le g$, put 
		$y^{(i)} := (1,\ldots , 1, \stackrel{i}{\stackrel{\vee}{y}}, 1,\ldots , 1) \in (\Ubar_k)^{\oplus g}$  
		and we denote by $w^{(i)}\in \Ahat(k)/p$ 
		the element corresponding to $y^{(i)}$ through the isomorphism 
		$\Ahat(k)/p \simeq (\Ubar_k)^{\oplus g}$ (\autoref{prop:Hir20} (i)).
		The Galois symbol map is compatible with the Hilbert symbol map 
		(\cite[Section~XIV.2, Proposition~5]{Ser68})
		as the following commutative diagram indicates: 
		\begin{equation}\label{diag:H-AV}
			\vcenter{\xymatrix{
			\Ahat(k)/p \otimesZ \kt/p\ar[r]^-{\iota}\ar[d]^{\simeq} & (\Ahat \otimes \Gm)(k)/p \ar[r]^-{s_{p}}_-{\simeq} \ar[r]& H^2(k,\Ahat[p]\otimes \mu_p) \ar[d]^-{\simeq} \\
			\left(\Ubar_k \otimesZ \kt/p\right)^{\oplus g} 
			\ar[rr]^-{(-,-)_p}&  &(\Z/p)^{\oplus g}.
			}}
		\end{equation}
		Here, $s_p$ is the Galois symbol map and is bijective (\autoref{prop:Hir20} (ii)), 
		and the map $\iota$ is given by 
		$\iota( w\otimes x) = \set{w,x}_{k/k}$. 
		The image of $ w^{(i)}\otimes\zeta \in \Ahat(k)/p\otimesZ \kt/p$ in $(\Z/p)^{\oplus g}$ 
		via the lower left corner in \eqref{diag:H-AV} 
		is
		$\xi^{(i)} := (0,\ldots , 0, \stackrel{i}{\stackrel{\vee}{(y,\zeta)_p}}, 0, \ldots , 0)
		\in (\Z/p)^{\oplus g}$. 
		These elements $\xi^{(i)}$ $(1\le i \le g)$ generate $(\Z/p)^{\oplus g}$ 
		and hence the symbols $\set{w^{(i)}, \zeta}_{k/k} = \iota (w^{(i)} \otimes \zeta )$ for $1\le i \le g$ 
		generate $(\Ahat \otimes \Gm)(k)/p$. 
	\end{proof}

 	For any $n\ge 1$, consider the exact sequence  
	\[
	(\Ahat \otimes \Gm)(k)/p \onto{p^n} (\Ahat \otimes \Gm)(k)/p^{n+1} \to  (\Ahat \otimes \Gm)(k)/p^n \to  0, 
	\]
	where $p^n$ is the map induced from the multiplication by $p^n$. 
	From the claim above, the map $p^n$ becomes $0$ for all $n\ge M$,
	so that $(\Ahat \otimes \Gm)(k)/p^{n+1} \simeq (\Ahat \otimes \Gm)(k)/p^{n}$. 
	It is left to show 
	$(\Z/p^M)^{\oplus g } \surj  (\Ahat \otimes \Gm)(k)/p^{M}$. 
	%$\ordp(\#\M_M(k)) \le gM$. 
	%Finally, we give a bound for $(\Ahat \otimes \Gm)(k)/p^{M}$. 
	From \autoref{lem:GH4.3}, 
	by replacing $k$ with a sufficiently large unramified extension of it,  
	we may assume $\Ahat[p^{M}] \simeq (\mu_{p^M})^{\oplus g}$ as $G_k$-modules. 
	As the Galois symbol map 
	 $(\Ahat \otimes \Gm)(k)/p^M\to H^2(k,\Ahat[p^{M}]\otimes \mu_{p^M})$ 
	 is bijective (\autoref{prop:Hir20}, (ii)) and  $\mu_{p^M} \subset k$, we have  
	 \[
	 (\Ahat \otimes \Gm)(k)/p^M \simeq  H^2(k,\Ahat[p^{M}]\otimes \mu_{p^M}) \simeq H^2(k,\mu_{p^M}^{\otimes 2})^{\oplus g} \simeq (\Z/p^M)^{\oplus g}.
	 \]
	%Thus, we obtain 
	%$\ordp(\#\M_M(k)) \le gM$ and the assertion follows from this.
\end{proof}

\subsection*{Upper and lower bounds of the kernel of the boundary maps}
The Mackey functor defined by 
the formal group law $\Ahat$ associated to $A$ 
gives the short exact sequence as Mackey functors 
\begin{equation}	\label{seq:E}
	0\to \Ahat \xrightarrow{\iota} A \xrightarrow{\pi} A/\Ahat \to 0,
\end{equation}
where $A/\Ahat$ is defined by the exactness. 
The Mackey functor $A/\Ahat$ is given by 
$(A/\Ahat)(K) \simeq \Abar(\FK)$ 
for each finite extension $K/k$ with residue field $\FK$  
(for the precise description, see \cite[(3.3)]{RS00}). 
By applying $- \otimesM \Gm$ (which is right exact) 
to the sequence \eqref{seq:E}, 
we have the following commutative diagram with exact rows 
\begin{equation}\label{diag:partial}
	\vcenter{\xymatrix{
	& (\Ahat \otimesM \Gm) (k) \ar[d]^{\varphi} \ar[r]^{\iota\otimes \Id} &   (A \otimesM \Gm)(k)\ar@{->>}[d] \ar[r] & ((A/\Ahat)  \otimesM \Gm) (k) \ar[r]\ar@{-->>}[d]^{\psi}  & 0 \\
	0 \ar[r] & \Ker(\dA) \ar[r] & K(k;A,\Gm) \ar[r]^-{\dA} \ar[r] & \Abar(\Fk) \ar[r] & 0, 
	}}
\end{equation}
where the middle vertical map is the quotient map, and 
$\dA$ is the boundary map defined in \eqref{def:dG}.  
%and the far right vertical map $\psi$ is defined by 
%$\set{x,a}_{K/k} \mapsto N_{\FK/\Fk}(v_{K}(a)x)$. 
Here, the commutativity of the left square in \eqref{diag:partial} 
follows from the lemma below and 
this induces the right vertical map $\psi$ which is surjective.

\begin{lem}
	The boundary map $\dA$ annihilates the image of 
	$(\Ahat\otimes \Gm)(k)$ in $K(k;A,\Gm)$.
\end{lem}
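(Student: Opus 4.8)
The plan is to exploit that, by its very definition \eqref{def:dG}, the boundary map factors as $\dA=\pi\circ\plim_m s_m$, where $\pi$ denotes the projection onto the \'etale quotient $(T(\A)^{\et})_{G_k}\simeq\Abar(\Fk)$ appearing in the $G_k$-coinvariants \eqref{seq:con-et_Gcoinv} of the connected--\'etale sequence. By the definition of the Mackey product \eqref{eq:otimesM}, the image of $(\Ahat\otimesM\Gm)(k)$ in $K(k;A,\Gm)$ is generated by symbols $\set{x,a}_{K/k}$ with $K/k$ finite, $x\in\Ahat(K)\subset A(K)$ and $a\in K^{\times}$. Since $\dA$ is a homomorphism, it suffices to show that $\pi$ annihilates the image of each such symbol under $\plim_m s_m$.

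The key input is a compatibility of Kummer maps. The sequence \eqref{seq:conn-et_A2} identifies $\iota\colon\Ahat[m]\inj A[m]$ with the inclusion of the connected part, and arises from the compatibility of the multiplication-by-$m$ sequences for $\Ahat(\kbar)$ and $A(\kbar)$. This yields, for every finite $K/k$, a commutative square
\[
\xymatrix{
\Ahat(K)/m \ar[r]^-{\delta_{\Ahat}} \ar[d] & H^1(K,\Ahat[m]) \ar[d]^{\iota_{\ast}} \\
A(K)/m \ar[r]^-{\delta_A} & H^1(K,A[m])
}
\]
so that $\delta_A(x)=\iota_{\ast}\,\delta_{\Ahat}(x)$ for $x\in\Ahat(K)$. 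Combining this with the functoriality of the cup product and of the corestriction, the Galois symbol factors as
\[
s_m(\set{x,a}_{K/k})=\Cor_{K/k}\big(\iota_{\ast}\delta_{\Ahat}(x)\cup\delta_{\Gm}(a)\big)=(\iota\otimes\Id)_{\ast}\,\Cor_{K/k}\big(\delta_{\Ahat}(x)\cup\delta_{\Gm}(a)\big),
\]
whence $s_m(\set{x,a}_{K/k})$ lies in the image of $(\iota\otimes\Id)_{\ast}\colon H^2(k,\Ahat[m]\otimes\mu_m)\to H^2(k,A[m]\otimes\mu_m)$.

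It remains to pass to coinvariants and conclude. The local Tate duality isomorphism $H^2(k,M\otimes\mu_m)\simeq M_{G_k}$ --- the middle map $(\diamondsuit)$ of \eqref{def:dG} --- is natural in the finite $G_k$-module $M$, so it carries $(\iota\otimes\Id)_{\ast}$ to the map $\iota\colon(\A[m]^{\circ})_{G_k}\to(\A[m])_{G_k}$ obtained by applying $(-)_{G_k}$ to \eqref{seq:conn-et_A2}. Thus $s_m(\set{x,a}_{K/k})$ corresponds to an element of $\Im(\iota)\subset(\A[m])_{G_k}$ for every $m$. Because $(-)_{G_k}$ is right exact, the composite $\pi\circ\iota$ of the two maps in \eqref{seq:con-et_Gcoinv}, taken at each finite level, vanishes; hence the image of $s_m(\set{x,a}_{K/k})$ in $(\A[m]^{\et})_{G_k}$ is zero for all $m$. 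Passing to the limit and using the identification $(T(\A)^{\et})_{G_k}\simeq\Abar(\Fk)$ of \eqref{eq:TAet} gives $\dA(\set{x,a}_{K/k})=0$, as desired.

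The step I expect to require the most care is the Kummer-map compatibility, \Ie the assertion that the Kummer class $\delta_A(x)$ of a formal-group point $x\in\Ahat(K)$ comes from the connected part $\Ahat[m]$. This rests on the fact that $\Ahat(\kbar)$ is divisible --- it has no nontrivial prime-to-$p$ torsion and multiplication by $p$ is surjective on it --- which is precisely what makes $\delta_{\Ahat}$ defined and renders \eqref{seq:conn-et_A2} a morphism of short exact Kummer sequences; granting this, the commuting square and the factorization of $s_m$ are formal consequences of the functoriality of cup product, corestriction, and Tate duality.
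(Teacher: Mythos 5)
Your proof is correct and follows essentially the same route as the paper: both arguments rest on the functoriality of the Galois symbol with respect to $\iota\colon\Ahat\inj A$ (your Kummer-map compatibility, which in the paper appears as the commutativity of the left square in its diagram of symbol maps), the naturality of local Tate duality identifying $H^2(k,-\otimes\mu_m)$ with $G_k$-coinvariants, and the vanishing of the composite $\Ahat[m]_{G_k}\xrightarrow{\iota}A[m]_{G_k}\xrightarrow{\pi}\Abar[m]_{G_k}$. The only cosmetic difference is bookkeeping: the paper reduces once and for all modulo the single integer $m=\#\Abar(\Fk)$ (so that $\Abar(\Fk)\to\Abar(\Fk)/m$ is bijective and no limit is needed), whereas you verify the vanishing at every finite level and pass to $\plim_m$, which is equally valid given the naturality of the identification $T(A)_{G_k}\simeq\plim_m\,[A[m]_{G_k}]$ from the appendix.
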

\begin{proof}
	For $m = \#\Abar(\Fk)$, 
	there is a commutative diagram:
	\[
	\xymatrix{
	(\Ahat\otimes \Gm)(k) \ar[r]\ar[d]^{\bmod m} & K(k;A,\Gm)\ar[d]^{\bmod m} \ar[r]^-{\dA} & \Abar(\Fk) \ar[d]_{\simeq}^{\bmod m} \\
	(\Ahat\otimes \Gm)(k)/m \ar[r] & K(k;A,\Gm)/m\ar[r]^-{\d_{A,m}} & \Abar(\Fk)/m. 
	}
	\]
	It is enough to show that 
	the bottom sequence is a complex. 
	The Galois symbol maps induce the following commutative diagram with exact rows:
	\[
	\xymatrix{
	(\Ahat \otimes \Gm)(k)/m \ar[r]^{\iota \otimes \Id}\ar[d]^{s_m} & (A\otimes \Gm)(k)/m\ar[d]^{s_m} \ar[r]\ar[rd]^{\d_{A,m}} & ((A/\Ahat)\otimes \Gm)(k) \ar[r]\ar@{-->}[d] & 0\,\\ 
	H^2(k,\Ahat[m]\otimes \mu_m)\ar[r]\ar[d]^{\simeq}& H^2(k,A[m]\otimes \mu_m) \ar[r]\ar[d]^{\simeq} & H^2(k,\Abar[m]\otimes \mu_m)\ar[d]^{\simeq}\ar[r] & 0\, \\
	\Ahat[m]_{G_k} \ar[r]^{\iota} & A[m]_{G_k} \ar[r]^{\pi} & \Abar[m]_{G_k} \ar[r] & 0,
	}
	\]
	where the second exact sequence is induced from the exact sequence for $A[m]$ 
	noted in \eqref{seq:conn-et_A2}. 
	The definition of the boundary map $\dA$ (\Cf \eqref{def:dG}) says that 
	the composition 
	\[
	(A\otimes \Gm)(k)/m \surj K(k;A,\Gm)/m \xrightarrow{s_m} H^2(k,A[m]\otimes \mu_m) \xrightarrow{\pi}  H^2(k,\A[m]^{\et}\otimes \mu_m)
	\]
	is the boundary map $\d_{A,m}$.
	Since the bottom sequence in the above diagram is exact, 
	$\d_{A,m}$ annihilates the image of $(\Ahat\otimes \Gm)(k)/m$ and the assertion follows from this.
\end{proof}

\begin{thm}\label{thm:main}
	There are surjective homomorphisms 
	\[
	(\Z/p^{M^{\ur}})^{\oplus g} \surj \Ker(\dA)_{\fin} \surj (\Z/p^{N_A})^{\oplus g}, 
	\]
	where 
	$N_A = \max\set{n \ge 0| A[p^n] \subset A(k)}$ and  $g = \dim(A)$.
\end{thm}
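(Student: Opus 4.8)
The plan is to prove the two bounds separately, in both cases working with the finite $p$-group $\Ker(\dA)_{\fin}$, which is finite by \autoref{lem:dec} and $p$-primary by \autoref{lem:l-part}~(ii).

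For the lower bound I would extract it from the defining sequence $0\to\Ker(\dA)\to K(k;A,\Gm)\xrightarrow{\dA}\Abar(\Fk)\to 0$ by reducing modulo $p^{N_A}$. Since the divisible part of $\Ker(\dA)$ dies, $\Ker(\dA)_{\fin}\surj\Ker(\dA)/p^{N_A}$, and the associated $\Tor$-sequence gives a surjection $\Ker(\dA)/p^{N_A}\surj\Ker(\overline{\dA})$, where $\overline{\dA}\colon K(k;A,\Gm)/p^{N_A}\to\Abar(\Fk)/p^{N_A}$. By the definition \eqref{def:dG} of $\dA$, up to the local-duality isomorphism $\overline{\dA}=\pi\circ s_{p^{N_A}}$, with $s_{p^{N_A}}\colon K(k;A,\Gm)/p^{N_A}\surj H^2(k,A[p^{N_A}]\otimes\mu_{p^{N_A}})\simeq A[p^{N_A}]_{G_k}$ surjective (\cite[Theorem 3.3]{Som90}) and $\pi$ the coinvariant of the étale projection. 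Surjectivity of $s_{p^{N_A}}$ then forces $s_{p^{N_A}}\colon\Ker(\overline{\dA})\surj\Ker(\pi)$ to be surjective, and right exactness of $(-)_{G_k}$ applied to \eqref{seq:conn-et_A2} identifies $\Ker(\pi)=\Im(\Ahat[p^{N_A}]_{G_k}\to A[p^{N_A}]_{G_k})$. As $A[p^{N_A}]\subset A(k)$, the Galois action on $A[p^{N_A}]$, hence on its connected part, is trivial, so this image is just $\Ahat[p^{N_A}]\simeq(\Z/p^{N_A})^{\oplus g}$. Composing the displayed surjections yields $\Ker(\dA)_{\fin}\surj(\Z/p^{N_A})^{\oplus g}$; note that this step uses only the surjectivity of the Galois symbol, not its injectivity.

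For the upper bound I would exploit \autoref{thm:GmA} together with the left vertical map $\varphi\colon(\Ahat\otimes\Gm)(k)\to\Ker(\dA)$ of diagram \eqref{diag:partial}. A diagram chase (using that the top row is right exact, the middle vertical $q$ is surjective, and $\dA\circ q=\psi\circ b$) shows that $\Coker(\varphi)$ is a quotient of $\Ker(\psi)$: any $z\in\Ker(\dA)$ lifts along $q$ to some $x$ with $b(x)\in\Ker(\psi)$, and the assignment $b(x)\mapsto[z]$ is a well-defined surjection $\Ker(\psi)\surj\Coker(\varphi)$. Granting that $\Ker(\psi)$ is divisible, $\Coker(\varphi)$ is divisible, and since $\Ker(\dA)_{\fin}$ is finite the composite $\Im(\varphi)\to\Ker(\dA)\surj\Ker(\dA)_{\fin}$ is forced to be surjective (a group that is at once a quotient of a divisible group and finite must vanish). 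Hence $\Ker(\dA)_{\fin}$ is a quotient of $(\Ahat\otimes\Gm)(k)$; being finite of some exponent $p^e$, the surjection factors through $(\Ahat\otimes\Gm)(k)/p^e$, which by \autoref{thm:GmA} is itself a quotient of $(\Z/p^{\Mur})^{\oplus g}$. This produces the required surjection $(\Z/p^{\Mur})^{\oplus g}\surj\Ker(\dA)_{\fin}$.

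The main obstacle is the divisibility of $\Ker(\psi)$, equivalently the claim that $\psi$ restricts to an isomorphism $((A/\Ahat)\otimes\Gm)(k)_{\fin}\isomto\Abar(\Fk)$ on finite parts: the source is finite by divisible by \autoref{lem:dec} and \autoref{lem:RS3.4.4}~(ii), so $\psi$ automatically kills its divisible part and the only issue is injectivity on the finite part. I would establish this by identifying, via the Galois symbol $\plim_m s_m$ for the étale quotient $A/\Ahat$ and the isomorphism \eqref{eq:TAet} $(T(\A)^{\et})_{G_k}\simeq\Abar(\Fk)$, the finite part of $((A/\Ahat)\otimes\Gm)(k)$ with the Somekawa-type group of the ordinary abelian variety $\Abar$ over the finite residue field $\Fk$, for which the relevant symbol is an isomorphism onto $\Abar(\Fk)$; the complementary principal-unit (formal-group) contribution is divisible by the Mazur isomorphism $\Ahat\otimes_{\Ok}O_{\kur}\simeq\Gmhat^{\oplus g}$ already used in \autoref{prop:Hir20}. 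Alternatively, whenever the Galois symbol $s_m$ is injective for all $m$ (e.g.\ for Jacobians, as established after \autoref{thm:cft}), one may bypass $\psi$ entirely: \autoref{bloch} gives $\Ker(\dA)_{\fin}\simeq\Im((T_p(\A)^{\circ})_{G_k}\to T_p(A)_{G_k})$, and since $T_p(\A)^{\circ}=T_p(\Ahat)$ is an unramified twist of $\Z_p(1)^{\oplus g}$, a direct coinvariants computation bounds this image by $(\Z/p^{\Mur})^{\oplus g}$, the exponent $p^{\Mur}$ arising precisely because $\mu_{p^m}\subset\kur$ is the condition that the cyclotomic character be trivial on inertia modulo $p^m$.
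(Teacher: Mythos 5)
Your lower bound is complete and correct, and it takes a genuinely more self-contained route than the paper: where the paper identifies $\Ker(\d_{A,p^{N_A}})$ with $(\Ahat \otimesM \Gm)(k)/p^{N_A} \simeq (\Z/p^{N_A})^{\oplus g}$ by citing \cite[Lemma 4.1, Corollary 4.3, Lemma 4.5]{Hir21}, you obtain the surjection onto $\Ker(\pi)=\Im\bigl(\Ahat[p^{N_A}]_{G_k}\to A[p^{N_A}]_{G_k}\bigr)\simeq (\Z/p^{N_A})^{\oplus g}$ using only the surjectivity of the Galois symbol map (\cite[Theorem 3.3]{Som90}), right exactness of $G_k$-coinvariants applied to \eqref{seq:conn-et_A2}, and the triviality of the $G_k$-action on $A[p^{N_A}]$. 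Your upper-bound skeleton (the connecting surjection $\Ker(\psi)\surj\Coker(\varphi)$ from \eqref{diag:partial}, divisibility of $\Coker(\varphi)$ forcing $\Im(\varphi)\surj \Ker(\dA)_{\fin}$, then \autoref{thm:GmA}) is also exactly the paper's.

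The gap is the step you "grant": the divisibility of $\Ker(\psi)$. This is not a technical detail but the entire mathematical content of the upper bound, and neither of your proposals establishes it. Your correct reformulation --- that $\psi$ must be injective on $\M_{\fin}$, where $\M=((A/\Ahat)\otimesM\Gm)(k)$ --- requires control of $\M/m \to \Abar(\Fk)/m$ for \emph{all} $m$, including $m$ prime to $p$, and your sketch says nothing about that; moreover the proposed identification of $\M_{\fin}$ with a ``Somekawa-type group of $\Abar$ over $\Fk$'' cannot work as stated, since over a finite field such Somekawa groups vanish (\cite{Kah92b}). The paper never proves divisibility of $\Ker(\psi)$ itself: it replaces $\dA$ by its $p$-primary projection $\dA^{\{p\}}$, checks $\Ker(\dA)/p^n \isomto \Ker(\dA^{\{p\}})/p^n$ (\Cf \eqref{eq:KerdA}), and then proves only that $\Ker(\psi^{\{p\}})$ is $p$-divisible, the key input being \cite[Lemma 4.1]{Hir21}: $\psi^{\{p\}}$ induces isomorphisms $\M/p^n \simeq \Abar(\Fk)\{p\}/p^n$ for all $n$, whence $\Ker(\psi^{\{p\}})=\bigcap_{n\ge 1}p^n\M$, which is $p$-divisible by an elementary argument using the exponent of $\Abar(\Fk)\{p\}$. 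Note that $p$-divisibility is all your own chase needs, since $\Ker(\dA)_{\fin}$ is a finite $p$-group by \autoref{lem:l-part}~(ii); so the missing idea is precisely this passage to the $p$-primary part together with the input from \cite{Hir21}. Finally, your fallback via \autoref{bloch} is sound under its hypothesis, but that hypothesis --- injectivity of $s_m$ for all $m$ --- is known only for Jacobians (via \eqref{diag:sm}); it would therefore recover \autoref{cor:main}, but not \autoref{thm:main} for an arbitrary abelian variety with good ordinary reduction, which is what is being claimed.
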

\begin{proof}
	\textbf{(Lower bound)}  
	To give the lower bound, we may assume $N := N_A>0$. 
	The diagram \eqref{diag:partial} induces  
	\[
		\xymatrix{
		(\Ahat \otimesM \Gm) (k)/p^{N} \ar[d] \ar[r] &   (A \otimes \Gm )(k)/p^{N} \ar[d]\ar[r] & ((A/\Ahat)  \otimesM \Gm) (k)/p^{N} \ar[r]\ar[d]  & 0\ \\
		\Ker(\dA)/p^{N} \ar[r] & K(k;A,\Gm)/p^{N} \ar[r]^{\d_{A,p^N}} \ar[r] & \Abar(\Fk)/p^{N} \ar[r] & 0. 
		}
	\]
	In fact, the middle and right vertical maps are bijective 
	(\cite[Lemma 4.1, Corollary 4.3 (i)]{Hir21}), 
	the upper sequence is left exact, 	and 
	$(\Ahat \otimes \Gm)(k)/p^{N} \simeq (\Z/p^{N})^{\oplus g}$  
	(\cite[Lemma 4.5, (ii)]{Hir21}). 
	Therefore, 
	\begin{equation}\label{eq:lower}
		\Ker(\dA) \surj \Ker(\dA)/p^{N}\surj \Ker(\d_{A,p^N}) \simeq (\Ahat \otimesM \Gm)(k)/p^{N}\simeq (\Z/p^{N})^{\oplus g}.
	\end{equation}
	
	\sn
	(\textbf{Upper bound}) 
	Consider the decomposition 
	$\Abar(\Fk) = \Abar(\Fk)\{p\} \oplus \Abar(\Fk) \{m\}$ 
	for some $m$ coprime to $p$. 
	The composition $\d_A^{\{p\}}: K(k;A,\Gm)\xrightarrow{\dA} \Abar(\Fk) \surj \Abar(\Fk)\{p\}$ 
	gives the following diagram:
	\begin{equation}\label{eq:d_Ap} 
		\xymatrix{
		0 \ar[r] & \ar@{^{(}->}[d]^{j} \Ker(\dA) \ar[r] & K(k;A,\Gm) \ar@{=}[d]\ar[r]^{\dA} & \Abar(\Fk) \ar@{->>}[d]\ar[r] & 0\, \\
		0 \ar[r] & \Ker(\dA^{{\{p\}}}) \ar[r] & K(k;A,\Gm) \ar[r]^{\dA^{\{p\}}} & \Abar(\Fk)\{p\} \ar[r] & 0.
		}
	\end{equation}
	By applying the snake lemma, the above diagram induces an isomorphism 
	$\Abar(\F)\{m\}\isomto  \Coker(j)$. 
	%This in particular implies 
	%$\ck(\iota)=\Ker(\dA^{\{p\}})/\Ker(\dA)$ is an $m$-torsion group, and hence the induced map $\Ker(\dA)/p^n \to \Ker(\dA^{\{p\}})/p^n$ is injective, 
	Since $\Tor_\Z(\ck(j),\Z/p^n)=0$, we conclude that 
	%\[\ck(\iota)/p^n\simeq \Ker(\dA^{\{p\}})/p^n/\Ker(\dA)/p^n=0,\] and hence we have an isomorphism 
	%Here we used the isomorphism $K(k;\Gm,A)_{\fin}\{m\}\stackrel{\star}{\simeq}\Abar(\Fk)\{m\}$ (cf. \eqref{eq:star}). 
	\begin{equation}\label{eq:KerdA} 
		\Ker(\dA)/p^n \isomto \Ker(\dA^{\{p\}})/p^n.
	\end{equation}
	From the diagram \eqref{diag:partial}, we have 
	\begin{equation}\label{diag:partialp} 
		\vcenter{\xymatrix{
		& (\Ahat \otimesM \Gm) (k) \ar[d]^{\varphi^{{\{p\}}}} \ar[r] &   (A \otimesM \Gm)(k)\ar@{->>}[d] \ar[r] & ((A/\Ahat)  \otimesM \Gm) (k) \ar[r]\ar[d]^{\psi^{{\{p\}}}}  & 0 \\
		0 \ar[r] & \Ker(\dA^{\{p\}}) \ar[r] & K(k;A,\Gm) \ar[r]^-{\dA^{\{p\}}} \ar[r] & \Abar(\Fk)\{p\} \ar[r] & 0, 
		}}
	\end{equation}
	where the right vertical map $\psi^{{\{p\}}}$ is 
	the composition $((A/\Ahat)  \otimesM \Gm) (k)\xrightarrow{\psi} 
	\Abar(\Fk) \surj \Abar(\Fk){\{p\}}$.

	\begin{oneclaim}
		$\Ker(\psi^{\{p\}})$ is $p$-divisible. 
	\end{oneclaim}
	\begin{proof}
		Put $\M = ((A/\Ahat\,) \otimes \Gm)(k)$. 
		Since $\psi^{\{p\}}$ induces an isomorphism 
		\[
		\M/p^n \simeq \Abar(\Fk){\{p\}}/p^n = \Abar(\Fk)/p^n
		\] 
		for all $n\ge 1$ (\cite[Lemma 4.1]{Hir21}), we have
		$\plim_n \M/p^n \simeq \plim_n \Abar(\Fk)/p^n \simeq \Abar(\Fk){\{p\}}$.  
		It follows that 
		\begin{equation}
		\label{eq:Kerpsi}
		\Ker(\psi^{\{p\}}) = \Ker\left(\M\to \plim_n\M/p^n\right) = \bigcap_{n\ge 1}p^n\M. 
		\end{equation}  
		As $\Abar(\Fk)\{p\}$ is a finite $p$-group, 
		there exists $s\ge 0$ such that $p^s$ annihilates $\Abar(\Fk)\{p\}$. 
		To show the claim, take any $x\in \Ker(\psi^{\{p\}})$ and any $n\ge 1$. 
		From \eqref{eq:Kerpsi}, 
		there exists $y \in \M$ such that $x = p^{n+s}y = p^n(p^sy)$. 
		Here, $p^sy \in \Ker(\psi^{\{p\}})$. 
		Thus, $\Ker(\psi^{\{p\}})$ is $p$-divisible. 
	\end{proof}
 
	The snake lemma applied to the diagram \eqref{diag:partialp} yields a surjection 
	$\Ker(\psi^{{\{p\}}})\surj \Coker(\varphi^{\{p\}})$. 
	From the above claim, $\Coker(\varphi^{\{p\}})$ is also $p$-divisible. 
	The map $\varphi^{\{p\}}$ induces a surjective homomorphism
	%$\varphi_n^{\{p\}} \colon (\Ahat \otimes \Gm)(k)/p^n \surj  \Ker(\dA)/p^n$ for each $n\ge 1$.
	\[
	\varphi_n: (\Ahat \otimesM \Gm) (k)/p^n \stackrel{\varphi^{\{p\}}}{\surj} \Ker(\dA^{\{p\}})/p^n \stackrel{\eqref{eq:KerdA}}{\simeq} \Ker(\dA)/p^n.
	\]
	From \autoref{thm:GmA}, 
	we obtain 
	\begin{equation}
	\label{eq:p-part}	
	(\Z/p^{M^{\ur}})^{\oplus g} \surj (\Ahat\otimes \Gm)(k)/p^n \overset{\varphi_n}{\surj} \Ker(\dA)/p^n 
	\end{equation}
	for any $n \ge 1$. 
	For the finite part $\Ker(\dA)_{\fin}$ is a $p$-group 
	(\autoref{lem:l-part} (ii))  
	this implies the existence 
	of surjective homomorphism
	$(\Z/p^{M^{\ur}})^{\oplus g} \surj \Ker(\dA)_{\fin}$
	as required.
\end{proof}

\begin{rem}
\label{rem:upper_E}
	In the case where $A = E$ is an elliptic curve, define 
	\[
	\Nhat := \max \set{n | \Ehat[p^n]\subset \Ehat(k)}.
	\] 
	In general, we have $N \le \Nhat$. 
	By \cite[Lemma 4.26 and Lemma 4.27]{Maz72}, 
	the base change $\Ehat[p^n]_{\kur}$ to $\kur$ 
	gives $\Ehat_{\kur}[p^{n}] \simeq \mu_{p^n}$ and hence 
	$\Nhat \le  M^{\ur}$. 
	Using this, 
	we will give a refined upper bound 
	$\Z/p^{\Nhat} \surj \Ker(\dE)$ 
	 in \autoref{prop:upper_E}.
\end{rem}

\begin{rem}
	As noted in the introduction, 
	we apply \autoref{thm:main} 
	to the Jacobian variety $J = \Jac(X)$ for 
	a curve $X$ over $k$ which has good reduction  
	to obtain the structure of $\piabXgeoram$ (\autoref{cor:main}). 
	However, the structure of $\Ker(\d_J) \subset K(k;J,\Gm) \simeq V(X)$ 
	can be obtained 
	without assuming $X$ has good reduction. 
	Precisely, 
	let $X$ be a projective smooth curve over $k$  with $X(k)\neq \emptyset$ 
	and assume that the Jacobian variety $J = \Jac(X)$ has good ordinary reduction. 
	From \autoref{thm:main}
	there are surjective homomorphisms 
	\[
	(\Z/p^{M^{\ur}})^{\oplus g} \surj \Ker(\dJ)_{\fin} \surj (\Z/p^{N_J})^{\oplus g}.
	\]
	Note that when $X$ has good reduction 
	(this is the very case studied in \cite{Blo81}), 
	its Jacobian $J$ has good reduction. But, the converse does not hold in general.  
	By the semi-stable reduction theorem, at least $X$ has semi-stable reduction, 
	that is, there exists a model $\X$ over $\Ok$ of $X$ whose 
	closed fiber $\Xbar = X\otimes_{\Ok}\Fk$ is semistable, \Ie $\Xbar$ is reduced and has at most
	ordinary double points as singularities
	(\cite[Theorem 2.4]{DM69}).
\end{rem}

The following proposition due to Yoshiyasu Ozeki
insists that 
if we enlarge the base field $k$ 
then the difference $N_A\le \Mur$ becomes arbitrarily large.

\begin{prop}\label{prop:ozeki}
	Let $A$ be an abelian variety over $k$ with potentially good reduction. 
	For an extension $K/k$, we define
	\begin{align*}
	N_A(K) &:=  \max \set{n | A[p^n] \subset A(K)} = N_{A_K},\ \mbox{and}\\
	M(K) &:= \max \set{m | \mu_{p^n} \subset K^{\times}}. 
	\end{align*}
	Then, 
	for any $x > 0$,
	there exists a finite extension $K/k$, such that 
	$M(K)-N_A(K) > x$. 
\end{prop}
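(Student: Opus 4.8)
The plan is to reduce the statement to the single cyclotomic tower and then to a finiteness statement for torsion points. By the Weil pairing one always has $N_A(K)\le M(K)$, so the content of the proposition is to make the gap large. For $m\ge 1$ put $K_m := k(\mu_{p^m})$, a finite extension of $k$ with $M(K_m)\ge m$. It therefore suffices to bound $N_A(K_m)$ independently of $m$. Since $K_m\subset k(\mu_{p^\infty})=:k_\infty$, this follows from the finiteness of the $p$-primary torsion subgroup $A(k_\infty)\{p\}$: if that group is finite there is an integer $N_0$ with $A[p^{N_0+1}]\not\subset A(k_\infty)$, and hence $N_A(K_m)\le N_0$ for every $m$. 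Granting this, given $x>0$ I would take any $m>x+N_0$ and set $K=K_m$, so that $M(K)-N_A(K)\ge m-N_0>x$.

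It remains to show that $A(k_\infty)\{p\}$ is finite; this is Imai's theorem for abelian varieties with potentially good reduction. First I would reduce to good reduction: choose a finite extension $k'/k$ over which $A$ acquires good reduction and set $k'_\infty := k'(\mu_{p^\infty})$. As $k_\infty\subset k'_\infty$ is a finite subextension, finiteness of $A(k'_\infty)\{p\}$ implies that of $A(k_\infty)\{p\}$, so I may replace $k$ by $k'$ and assume $A$ has good reduction. Writing $V := T_p(A)\otimes_{\Zp}\Qp$, the group $A(k'_\infty)\{p\}$ is a cofinitely generated $\Zp$-module whose divisible corank equals $\dim_{\Qp}V^{G_{k'_\infty}}$; thus it is finite if and only if $V^{G_{k'_\infty}}=0$, and the whole proposition comes down to this vanishing.

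The heart of the matter, and the step I expect to be the main obstacle, is exactly this vanishing. Here I would invoke $p$-adic Hodge theory. Because $A$ has good reduction over $k'$, the representation $V$ is crystalline with Hodge--Tate weights $0$ and $1$, and the eigenvalues of crystalline Frobenius on $D_{\mathrm{cris}}(V)$ are the Weil numbers of the special fibre $\Abar$, each of complex absolute value $q^{1/2}$, where $q$ is the cardinality of the residue field of $k'$. Now $W := V^{G_{k'_\infty}}$ is $G_{k'}$-stable (as $G_{k'_\infty}$ is normal in $G_{k'}$), hence itself crystalline, and $G_{k'_\infty}$ acts trivially on it. A crystalline character of $G_{k'}$ trivial on $G_{k'_\infty}$ must be an integral power $\chi^n$ of the cyclotomic character $\chi$: its unramified part is forced to be trivial because $k'_\infty/k'$ is totally ramified, so $G_{k'_\infty}$ still surjects onto the residue Galois group. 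Hence every Jordan--Hölder constituent of $W$ over $\ol{\Qp}$ is such a $\chi^n$, and since the Hodge--Tate weights of $W$ lie in $\{0,1\}$ we get $n\in\{0,1\}$. But the crystalline Frobenius eigenvalue of $\chi^n$ is an integral power of $q$, whereas every Frobenius eigenvalue of $D_{\mathrm{cris}}(W)\subset D_{\mathrm{cris}}(V)$ has absolute value $q^{1/2}$; since $q\ge 2$, no integral power of $q$ has absolute value $q^{1/2}$, so $W=0$.

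The delicate point is precisely this numerical incompatibility: a bare Hodge--Tate (Sen-operator) analysis does not by itself exclude a nonzero invariant subspace of weight $0$ or $1$, and it is the finer crystalline Frobenius information that the eigenvalues are weight-one Weil numbers rather than powers of $q$ which produces the contradiction. If one prefers to avoid $p$-adic Hodge theory altogether, the finiteness of $A(k_\infty)\{p\}$ can be cited directly from Imai's theorem, and the reduction carried out in the first paragraph is unchanged.
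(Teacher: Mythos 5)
Your core argument is exactly the paper's: both proofs run up the cyclotomic tower $k_m=k(\mu_{p^m})$, use $M(k_m)\ge m$, and invoke Imai's theorem \cite{Ima80} that the torsion of $A(k_\infty)$, $k_\infty=\bigcup_m k_m$, is finite in order to bound $N_A(k_m)$ independently of $m$. Your version of this reduction is in fact marginally cleaner: the inclusion $A(k_m)\subset A(k_\infty)$ bounds $N_A(k_m)$ by $N_0$ directly, whereas the paper first establishes the stabilization $A(k_m)[p^\infty]=A(k_\infty)[p^\infty]$ for large $m$. Where you genuinely depart from the paper is your optional second half, a crystalline proof of Imai's finiteness itself; the paper does not attempt this and simply cites \cite{Ima80}. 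That sketch is sound in outline (the reduction to $V^{G_{k'_\infty}}=0$ via coranks is correct, and the Frobenius-eigenvalue contradiction is the right mechanism), but it contains two inaccuracies. First, $k'_\infty/k'$ need \emph{not} be totally ramified: its maximal unramified subextension can be nontrivial, though it is finite (of degree at most $e_{k'}$, since inertia is open in $\Gal(k'_\infty/k')$); the fix is either to enlarge $k'$ by this finite unramified extension, or to note that an unramified character trivial on $G_{k'_\infty}$ is then of finite order, whose Frobenius eigenvalue is a root of unity, so the incompatibility with absolute value $q^{1/2}$ persists. Second, for $k'\neq \Qp$ the crystalline characters of $G_{k'}$ are not all of the form $\chi^n\cdot(\mathrm{unramified})$ --- there are Lubin--Tate characters attached to the various embeddings $k'\hookrightarrow \ol{\Q}_p$ --- so ``crystalline and trivial on $G_{k'_\infty}$ implies $\chi^n$'' needs the extra input that the character factors through $\Gal(k'_\infty/k')\subset \Zp^{\times}$: restricted to inertia this forces all Lubin--Tate exponents to be equal, which recovers your conclusion. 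Since you explicitly allow falling back on citing Imai's theorem for the finiteness, neither point affects the validity of your proof of the proposition.
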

\begin{proof}
	For each  $m\ge 1$, put 
	$k_m := k(\mu_{p^m})$ and $k_{\infty} := \bigcup_{m\ge 1}k_m$.
	By definition, for any $m\ge 1$, we always have 
	\begin{equation}
		\label{M}
		m \le M(k_m).
	\end{equation}
	By Imai's theorem \cite{Ima80}, 
	$\# A(k_{\infty})_{\tor}<\infty$.
	In particular, $N_A(k_{\infty})< \infty$.
	For sufficiently large $m>0$, we have 
	$A(k_{\infty})[p^{\infty}] = A(k_m)[p^{\infty}]$.
	Take such $m$ satisfying 
	\begin{equation}
	\label{s} m >N_A(k_{\infty}).
	\end{equation} 
	On the other hand, for any $t\ge 1$, 
	\[
	A[p^t]\subset A(k_\infty)
	\Leftrightarrow A[p^t] \subset A(k_{\infty})[p^{\infty}] = A(k_m)[p^{\infty}]
	\Leftrightarrow A[p^t] \subset A(k_m).
	\]
	From these equivalences, 
	\begin{equation}
	\label{N2}
	A[p^{N_A(k_{\infty})+1}] \not\subset A(k_m), \quad \mbox{and}\quad 
	A[p^{N_A(k_{\infty})}]\subset A(k_m). 
	\end{equation}
	Thus we obtain 
	\[
	N_A(k_m) \stackrel{\eqref{N2}}{=} N_A(k_{\infty}) \stackrel{\eqref{s}}{<} m \stackrel{\eqref{M}}{\le} M(k_m).
	\]
	As $N_A(k_{\infty})$ does not depend on $m$ and we can take arbitrary large $m$, 
	the assertion follows by putting $K = k_m$.
\end{proof}

%%%%%%%%%%%%%%%%%%%%%
\section{Curves over a $p$-adic field}
\label{sec:max}
In this section, 
we give a proof of \autoref{thm:main_intro} 
and also construct the maximal covering of a curve $X$ over $k$ 
which produces all the subgroup $\piabXgeoram$ of $\piabXgeo$. 
Throughout this section, we use the following notation:
\begin{itemize}
	\item $X$: a projective smooth curve over $k$ with $X(k)\neq \emptyset$ 
	and we additionally assume that $X$ has \emph{good reduction}. 
	\item $\Xbar:= \X\otimes_{\Ok}\Fk$: 
	the special fiber of the regular model $\X$ over $\Ok$ of $X$. 
	\item 
	$J = \Jac(X)$: the Jacobian variety of $X$ which has good reduction from the assumption on $X$, 
	\item $\J$: the N\'eron model over $\Ok$ of $J$. 
	\item $\Jbar := \Jac(\Xbar)$: the Jacobian variety of $\Xbar$ 
	which is also  the closed fiber of $\J$. 
\end{itemize}
Finally, we suppose that  $\Jbar$ is an \emph{ordinary} abelian variety.
From this assumption, the Jacobian variety $J$ has good ordinary reduction. 
We fix a rational point $x\in X(k)$. 
By the valuative criterion for properness, 
$x$ gives rise to an $\Fk$-rational point of $\Xbar$ which is denoted by $\xbar \in \Xbar(\Fk)$.

\subsection*{Proof of the main theorem}
The boundary map $\d_J$ for $J$ 
defined in \eqref{def:dG} 
is compatible with $\dX$ defined in \eqref{def:dX} 
as in the following commutative diagram: 
\begin{equation}
	\label{def:dXdJ}
	\xymatrix{
	\VX \ar@{->>}[r]^-{\dX}\ar[d]_{\eqref{thm:Som}}^{\simeq} & A_0(\Xbar) \ar[d]^{\simeq} \\
	K(k;J,\Gm) \ar@{->>}[r]^-{\dJ} & \Jbar(\F),
	}
\end{equation}
where 
the right vertical map is the Abel-Jacobi map $A_0(\Xbar) \isomto \Jbar(\Fk)$ which is bijective 
(\cite[Lemma 2.2]{Som90}, see also \cite[Lemma 2.12]{Blo81}). 
Recall that both of $\dX$ and $\dJ$ are surjective, 
we obtain an isomorphism $\Ker(\dX)\isomto \Ker(\dJ)$. 
This isomorphism and \autoref{thm:main}
together with the class field theory $\mu_X\colon \Ker(\dX)_{\fin}\isomto \piabXgeoram$ (\Cf \eqref{eq:mu}) induce 
the following main result referred in \autoref{thm:main_intro}: 	
\begin{cor}
\label{cor:main}
We have surjective homomorphisms: 
\[
(\Z/p^{\Mur})^{\oplus g} \surj \piabXgeoram \surj (\Z/p^{N_J})^{\oplus g}.
\]
\end{cor}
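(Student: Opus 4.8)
The plan is to obtain \autoref{cor:main} by specializing \autoref{thm:main} to the Jacobian variety $A = J = \Jac(X)$ and then transporting its conclusion to $\piabXgeoram$ through the identifications already assembled in this section. By the standing assumption $\Jbar$ is ordinary, so $J$ has good ordinary reduction and \autoref{thm:main} is available for $A = J$. Since $N_A = N_J$ when $A = J$, since $g = \dim J$, and since $\Mur$ depends only on the base field $k$, this immediately yields surjections
\[
(\Z/p^{\Mur})^{\oplus g} \surj \Ker(\dJ)_{\fin} \surj (\Z/p^{N_J})^{\oplus g}.
\]

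Next I would transport these bounds from $\Ker(\dJ)_{\fin}$ onto $\piabXgeoram$ by chaining two canonical isomorphisms. First, the Somekawa identification $\VX \simeq K(k;J,\Gm)$ of \eqref{thm:Som}, together with the Abel--Jacobi isomorphism $A_0(\Xbar) \isomto \Jbar(\Fk)$, makes the square \eqref{def:dXdJ} commute; as both $\dX$ and $\dJ$ are surjective, passing to kernels gives an isomorphism $\Ker(\dX) \isomto \Ker(\dJ)$. Second, the reciprocity isomorphism $\mu_X \colon \Ker(\dX)_{\fin} \isomto \piabXgeoram$ of \eqref{eq:mu} supplied by the class field theory for $X$ identifies the finite part with the sought fundamental group.

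To combine these, I would observe that any isomorphism of abelian groups carries the maximal divisible subgroup onto the maximal divisible subgroup, so $\Ker(\dX) \isomto \Ker(\dJ)$ descends to an isomorphism of the canonical finite quotients $\Ker(\dX)_{\fin} \isomto \Ker(\dJ)_{\fin}$. Composing with $\mu_X$ then gives $\piabXgeoram \simeq \Ker(\dJ)_{\fin}$, and substituting this into the display above produces the asserted surjective homomorphisms.

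I do not expect a genuine obstacle here, as all the analytic content sits in \autoref{thm:main} and in the class field theory recalled in \autoref{thm:cft}. The only point demanding care is that the finite-by-divisible decomposition of \autoref{def:fin-by-div} is not literally unique; I would therefore phrase the transport of the bounds through the \emph{canonical} maximal divisible subgroup and its quotient rather than through a chosen finite complement, so that the identifications $\Ker(\dX)_{\fin} \isomto \Ker(\dJ)_{\fin}$ and $\Ker(\dX)_{\fin} \isomto \piabXgeoram$ are unambiguous.
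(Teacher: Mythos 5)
Your proposal is correct and follows essentially the same route as the paper: apply \autoref{thm:main} to $A=J$, identify $\Ker(\dX)\isomto\Ker(\dJ)$ via the commutative square \eqref{def:dXdJ} built from the Somekawa and Abel--Jacobi isomorphisms, and transport the bounds through the reciprocity isomorphism $\mu_X\colon\Ker(\dX)_{\fin}\isomto\piabXgeoram$ of \eqref{eq:mu}. Your extra remark that one should pass through the canonical maximal divisible subgroup and its quotient (rather than a chosen finite complement) is a sound clarification of a point the paper leaves implicit, not a different argument.
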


When the absolute ramification index $e_k = e_{k/\Qp}$ of $k$ satisfies $e_k < p-1$, 
we have 
$\mu_p \not\subset k^{\ur}$, and this implies $\Mur=0$. 
%If the extension $k(\mu_p)/k$ is unramified, then 
%$e_{k(\mu_p)} = e_{k} < p-1$. This contradicts with  
%$e_{k(\mu_p)/\Qp} = (p-1)e_{k(\mu_p)/\Qp(\mu_p)} \ge p-1$. 
%As a result, $\mu_p\not\subset k^{\ur}$ and hence $M = 0$. 
From \autoref{cor:main}  
we recover the following assertion in \cite[Proposition 7]{KS83b} 
(\Cf \cite[Theorem 3.2, Theorem 4.1]{Yos02}. For more general results, see also \cite[Proposition 4.25]{Ras95}).

\begin{cor}\label{georam}
	Assume $e_k<p-1$. Then, 
	we have
	$\piabXgeoram = 0$.
\end{cor}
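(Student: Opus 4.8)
The plan is to read this off directly from the upper bound in \autoref{cor:main}, once I check that the hypothesis $e_k < p-1$ forces $\Mur = 0$.

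First I would compute $\Mur$. Since $\kur$ is unramified over $k$, the ramification indices agree: $e(\kur/\Qp) = e_k$. On the other hand $\Qp(\mu_p)/\Qp$ is totally ramified of degree $p-1$, so $e(\Qp(\mu_p)/\Qp) = p-1$. If one had $\mu_p \subset \kur$, then $\Qp(\mu_p) \subseteq \kur$, and multiplicativity of ramification indices would give $p-1 \mid e_k$, contradicting $e_k < p-1$. Hence $\mu_p \not\subset \kur$, and a fortiori $\mu_{p^m}\not\subset \kur$ for all $m\ge 1$; thus $\Mur = 0$. This is exactly the assertion recorded in the sentence immediately preceding the statement, so one may alternatively just cite it.

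Then I would feed $\Mur = 0$ into the left surjection of \autoref{cor:main}. As $\Z/p^{\Mur} = \Z/1 = 0$, that corollary furnishes a surjection $0 \surj \piabXgeoram$, which forces $\piabXgeoram = 0$. The only real content is the ramification computation giving $\Mur = 0$; everything after that is a one-line consequence of the already-established bound, so I anticipate no genuine obstacle here.
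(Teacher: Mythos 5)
Your proposal is correct and is precisely the paper's argument: the paper records that $e_k < p-1$ implies $\mu_p \not\subset k^{\ur}$, hence $\Mur = 0$, and then the corollary follows at once from the upper bound $(\Z/p^{\Mur})^{\oplus g} \surj \piabXgeoram$ of \autoref{cor:main}. Your ramification-index computation just makes explicit the step the paper states without proof.
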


\subsection*{Construction of the maximal covering} 
In the following, we construct 
a geometric covering $\varphi\colon \wt{X} \to X$ 
such that  
the composition 
\[
  \piab(X)^{\geo}_{\ram} \inj \piabXgeo \simeq \Gal(\kX^{\geo}/\kX) \surj \Aut(\varphi)
\] 
is bijective. 
The construction of such covering is known classically as 
the pullback of an appropriate isogeny $\wt{J}\to J$ along 
the Albanese map $f^x\colon X\to J = \Jac(X)$ associated with the given rational point $x\in X(k)$ 
(\Cf \cite{Ser88}). 
Since we could not find appropriate references, we give precise explanations below: 
Consider also the Albanese map $f^{\xbar}\colon \Xbar \to \Jbar$ 
(\cite[Section 6]{MilneJac}). 
We have the middle vertical arrow in the commutative diagram below 
\begin{equation}\label{diag:NMP}
	\vcenter{\xymatrix{
	X\ar[r]\ar[d]_{f^x} & \X\ar@{-->}[d] & \ar[l] \Xbar \ar[d]^{f^{\xbar}} \\
	J\ar[r] & \J & \ar[l] \Jbar
	}}
\end{equation}
by the N\'eron mapping property of $\J$.% (\Cf \cite[Section 8]{MilneJac}).

\begin{lem}
	The diagram \eqref{diag:NMP} above induces
	$\piabXgeo \simeq \pi_1(J)^{\geo}$ and $\piabX^{\geo}_{\ram} \simeq \pi_1(J)^{\geo}_{\ram}$.
	Note that all finite \'etale coverings of $J$ are abelian.
\end{lem}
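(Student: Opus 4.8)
The plan is to reduce both isomorphisms to the single classical fact that the Albanese map of a curve induces an isomorphism on abelian fundamental groups, applied over $k$ to $f^x\colon X\to J$ and over the residue field $\Fk$ to $f^{\xbar}\colon \Xbar\to \Jbar$. First I would record the parenthetical remark: since $J$ is an abelian variety, every connected finite \'etale cover of $J$ is an isogeny $\wt{J}\to J$ with $\wt{J}$ again an abelian variety, so $\pi_1(J)$ is abelian and coincides with $\piab(J)$. Functoriality of $\piab$ applied to the $k$-morphism $f^x$ then produces a homomorphism $\piabX\to \pi_1(J)$ compatible with the structure maps to $\Spec(k)$; concretely it gives a morphism of the split short exact sequences \eqref{eq:fund_ex_seq} for $X$ and for $J$ which is the identity on $\Gkab$. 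By the five lemma, the induced map $\piabXgeo\to \pi_1(J)^{\geo}$ on geometric parts is an isomorphism as soon as the middle map $\piabX\to \pi_1(J)$ is one.

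The essential input is therefore that $f^x$ induces an isomorphism $\piabX\isomto \pi_1(J)$, which is the classical statement that all abelian \'etale coverings of $X$ arise as pullbacks along $f^x$ of isogenies of $J$. I would justify it by base change to $\kbar$: the Albanese map identifies $\piab(X\otimes_k\kbar)\cong H_1^{\et}(X\otimes_k\kbar,\Zhat)$ with the full Tate module $T(J\otimes_k\kbar)=\pi_1(J\otimes_k\kbar)$ of the Jacobian, $G_k$-equivariantly, and this descends to $k$. The same statement over the finite field $\Fk$, applied to $f^{\xbar}\colon \Xbar\to \Jbar$, yields $\piabXbargeo\isomto \pi_1(\Jbar)^{\geo}$ in the same way.

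For the ramified parts I would exploit that the model diagram \eqref{diag:NMP}, provided by the N\'eron mapping property of $\J$, is compatible with the specialization maps \eqref{eq:sp}. Since $\sp$ is natural with respect to morphisms of models, the reduction $f^{\xbar}$ of $f^x$ intertwines the specialization maps of $X$ and of $J$, giving a commutative square whose vertical arrows are the geometric isomorphisms just established and whose horizontal arrows are the two specialization maps. Taking kernels of the horizontal maps then identifies $\piabXgeoram=\Ker(\sp\colon\piabXgeo\to\piabXbargeo)$ with $\pi_1(J)^{\geo}_{\ram}$ (\Cf \autoref{def:piram}). The main obstacle is the careful verification of this commutative square, namely the functoriality of the specialization map along \eqref{diag:NMP}; the descent of the Albanese isomorphism from $\kbar$ to $k$ is the remaining point requiring care, though it follows formally once the $G_k$-equivariant identification $H_1^{\et}(X\otimes_k\kbar,\Zhat)\cong T(J\otimes_k\kbar)$ is in hand.
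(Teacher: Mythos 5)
Your proposal is correct in substance, and it rests on the same essential input as the paper's proof, namely Milne's theorem that the Albanese map induces an isomorphism $\piab(X\otimes_k\kbar)\isomto\pi_1(J\otimes_k\kbar)$; but the descent from $\kbar$ to $k$ (and to $\Fk$) is carried out by a genuinely different mechanism. The paper works on the Pontryagin-dual side: it first checks $H^2(k,\QZ)=0$, compares the five-term Hochschild--Serre sequences $0\to H^1(k,\QZ)\to H^1_{\et}(X,\QZ)\to H^1_{\et}(X\otimes_k\kbar,\QZ)^{G_k}\to 0$ for $X$ and for $J$ (the vanishing of $H^2(k,\QZ)$ giving exactness on the right), applies Milne's theorem to identify the right-hand terms, and dualizes to obtain $\piabX\simeq \piab(J)$, then repeats this over $\Fk$. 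You argue directly on fundamental groups: the sections induced by $x$ and $0_J$ split the sequences \eqref{eq:fund_ex_seq} compatibly (since $f^x(x)=0_J$), Milne's isomorphism is $G_k$-equivariant, and the five lemma transfers everything. Both routes are valid. What the paper's route buys is that it never needs the splitting: exactness at the geometric term is guaranteed by $H^2(k,\QZ)=0$ alone. What your route buys is the avoidance of \'etale cohomology; the price is that the step you call ``formal'' is precisely where the rational point enters and should be spelled out: one needs that $\piabXgeo$ \emph{equals} (not merely receives a surjection from) the coinvariants $\piab(X\otimes_k\kbar)_{G_k}$, which holds because $1\to\pi_1(X\otimes_k\kbar)\to\pi_1(X)\to G_k\to 1$ splits via $x$, and similarly for $J$ via $0_J$; for a non-split extension the geometric part can be a proper quotient of the coinvariants.

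Two further remarks. First, your justification of the parenthetical note proves too much: $\pi_1(J)$ is \emph{not} abelian over $k$, being an extension of $G_k$ by $T(J)$ with nontrivial action (for instance, a non-normal extension $k'/k$ yields the connected, non-Galois covering $J\otimes_k k'\to J$). The statement the lemma actually uses is that the \emph{geometric} connected coverings of $J$ are abelian: by the Lang--Serre theorem such a covering is an isogeny, and complete splitness of the fiber over $0_J$ forces its kernel to be a constant group scheme, so the covering is Galois with abelian group; this is also how the paper's own loose note, and the notation $\pi_1(J)^{\geo}$, should be read, namely as $\piab(J)^{\geo}$. Second, for the ramified parts your reduction to the commutativity of the square formed by the two specialization maps and the two Albanese isomorphisms is exactly right, and is in fact the step the paper leaves implicit in its closing ``Thus''; as you say, it follows from functoriality of \eqref{eq:sp} applied to the morphism of models in \eqref{diag:NMP}.
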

\begin{proof}
	%The sequences are exact on the right because the group $H^2(k,\QZ)$ vanishes. 
	Because of $H^2(k,\Q) = H^3(k,\Z) = 0$,  %($\Q$ is injective), 
	and the long sequence arising from $0 \to \Z\to \Q \to \Q/\Z \to 0$, 
	we have $H^2(k,\QZ) = 0$. 
	The five-term exact sequence induced by the Hochschild-Serre spectral sequence gives 
	short exact sequences
	\[
	\xymatrix{
	0 \ar[r] & H^1(k,\QZ)\ar[r]\ar[d]^{\simeq} &  H^1_{\et}(X,\QZ)\ar[r]\ar[d] & H^1_{\et}(X\otimes_k\kbar,\QZ)^{G_k}\ar[r] \ar[d]^{\simeq} & 0 \\
	0 \ar[r] & H^1(k,\QZ)\ar[r] &  H^1_{\et}(J,\QZ)\ar[r] & H^1_{\et}(J\otimes_k\kbar,\QZ)^{G_k}\ar[r]& 0. \\
	}
	\] 
	The sequences are exact on the right because the group $H^2(k,\QZ)$ vanishes. 
	%(because of $H^2(k,\Q) = 0$, %($\Q$ is injective), 
	%the cohomological dimension reason, 
	%and the long sequence arising from $0 \to \Z\to \Q \to \Q/\Z \to 0$.)
	Here, the right vertical map is bijective, 
	because $f^x$ induces an isomorphism $\piab(X\otimes_k\kbar) \simeq \pi_1(J\otimes_k \kbar) = \piab(J\otimes_k \kbar)$ (\cite[Proposition 9.1]{MilneJac}). 
	We obtain $\piabX\simeq H^1_{\et}(X,\QZ)^{\vee}\simeq H^1_{\et}(J,\QZ)^{\vee}\simeq \pi_1(J)$. 
	In the same way, we also obtain $\piab(\Xbar) \simeq \pi_1(\Jbar)$. 
	%(by $H^2(\Fk,\QZ) = 0$. 
	Thus, we obtain $\piabXgeo \simeq  \pi_1(J)^{\geo}$ and $\piabXgeoram \simeq \pi_1(J)^{\geo}_{\ram}$. 
\end{proof}

It follows by \autoref{cor:main} that there is an isomorphism
\[ 
	\piabXgeoram \simeq\bigoplus_{i=1}^g\Z/p^{r_i},
\] 
for some integers $N_J\leq r_i\leq M^{\ur}$, $i=1,\ldots,g$. 
In particular, this implies that $\piabXgeoram$ has a subgroup isomorphic to $(\Z/p^{N_J})^{\oplus g}$.  
We wish to find an explicit finite abelian covering $X'\rightarrow X$ 
whose Galois group coincides with  the aforementioned subgroup of $\piab(X)_{\ram}^{\geo}$. 
This is of course only interesting when $N_J\geq 1$. 
Put $N := N_J$ and suppose $N \geq 1$. 
Consider the splitting 
\[
	J[p^{N}]\simeq\Jhat[p^{N}]\oplus\Jbar[p^{N}]\simeq(\mu_{p^{N}})^{\oplus g}\oplus(\Z/p^{N})^{\oplus g}
\] 
induced by the connected-\'{e}tale short exact sequence for $J$ (\Cf \eqref{seq:conn-et_A2}). 
Put $H_N := \Jbar[p^N]$ and consider it as a subgroup of $J[p^N]$. 
This induces an isogeny $\psi:J\to J/H_N=:J_N$ with kernel $H_N$ (\cite[Example 4.40]{EMvdG}). 
Let $\check{\psi}:J_N\to J$ be its dual (\cite[Proposition 5.12]{EMvdG}).  

\begin{prop}\label{lem:geometric_J}
	The isogeny $\check{\psi} :J_N\to J$ is a geometric covering which is completely unramified over $\Jbar$. 
	Furthermore, we have $\Aut(\check{\psi}) \simeq (\Z/p^N)^{\oplus g}$.
\end{prop}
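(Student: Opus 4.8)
The plan is to establish the three assertions of \autoref{lem:geometric_J}—that $\check\psi$ is a finite \'etale Galois covering with $\Aut(\check\psi)\simeq(\Z/p^N)^{\oplus g}$, that it is geometric, and that it is completely unramified over $\Jbar$—one at a time, using throughout that $\Char k=0$ and that $N=N_J\le M$ (see \autoref{rem:M}), so that $\mu_{p^N}\subset k$. First I would record the easy formal points. Since $\Char k=0$, every finite $k$-group scheme is \'etale, so the isogeny $\check\psi$ is separable and is therefore a finite \'etale covering. Moreover, any $J$-automorphism $\sigma$ of $J_N$ satisfies $\check\psi\circ(\sigma-\Id)=0$, so $\sigma-\Id$ is a morphism from the abelian variety $J_N$ into the finite group scheme $\ker\check\psi$, hence constant; thus $\sigma$ is translation by a $k$-point of $\ker\check\psi$, and $\check\psi$ is Galois with $\Aut(\check\psi)\simeq\ker(\check\psi)(k)$.

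Next I would pin down the kernel. The defining relation $\check\psi\circ\psi=[p^N]_J$ exhibits $\ker\check\psi$ as a finite group scheme of order $\deg[p^N]_J/\deg\psi=p^{Ng}$. Because $N=N_J\le M$ forces $\mu_{p^N}\subset k$, all $p^{Ng}$ geometric points of this kernel are already $k$-rational, so $\ker(\check\psi)(k)\simeq(\Z/p^N)^{\oplus g}$ and hence $\Aut(\check\psi)\simeq(\Z/p^N)^{\oplus g}$. The same rationality shows $\check\psi$ is geometric in the sense of \autoref{def:ur}: the fibre over the base point $0\in J(k)$ (the image of the fixed $x\in X(k)$ under the Albanese map) is $\check\psi^{-1}(0)=\ker\check\psi$, a disjoint union of $p^{Ng}$ copies of $\Spec k$, i.e.\ completely split.

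For unramifiedness over $\Jbar$ I would spread everything out over $\Ok$. The subgroup $H_N=\Jbar[p^N]$ is exactly the \'etale part of the $p^N$-torsion of the Néron (abelian) scheme $\J$, so $\psi$ prolongs to an isogeny of abelian schemes over $\Ok$ with \'etale kernel $H_N$, and dually $\check\psi$ prolongs to an isogeny $\J_N\to\J$. It then suffices to check that this prolongation is finite \'etale over $\Ok$: its closed fibre will be an abelian \'etale covering $\ol{J_N}\to\Jbar$ whose group is again $(\Z/p^N)^{\oplus g}$, and $\check\psi$ together with this closed fibre will fit into the square \eqref{eq:covers}, exhibiting $\check\psi$ as unramified over $\Jbar$ and placing it under the correspondence \eqref{eq:1to1}.

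The hard part will be this last \'etaleness. Concretely, one must show that the scheme-theoretic closure of $\ker(\check\psi)$ inside $\J_N$ is an \'etale $\Ok$-group scheme, equivalently that the reduction $\check\psi\otimes_{\Ok}\Fk$ is a separable isogeny of degree $p^{Ng}$ with $p^{Ng}$ distinct $\ol{\Fk}$-points in its kernel. This is precisely where the ordinary-reduction hypothesis on $\Jbar$ and the equality $N=N_J$ must be used, through the connected–\'etale splitting of $\J[p^N]$ over $\Ok$ that was used to define $H_N$ in the first place; I expect the delicate bookkeeping to lie entirely in controlling this closed fibre. Once the kernel is known to be \'etale over $\Ok$, the matching of Galois groups $\Aut(\ol{J_N}/\Jbar)\simeq\Aut(\check\psi)\simeq(\Z/p^N)^{\oplus g}$ is immediate and the verification of \autoref{def:ur} is complete.
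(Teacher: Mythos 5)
Your first two steps essentially reproduce the paper's own proof: \'etaleness of $\check{\psi}$ because $\Char k=0$, the identification $\Aut(\check{\psi})\simeq\Ker(\check{\psi})$ via translations, and complete splitness of the fibre over $0_J$ because $\Ker(\check{\psi})=\psi(\Jhat[p^N])$ consists of $k$-rational points. One local imprecision: the isomorphism $\Ker(\check{\psi})\simeq(\Z/p^N)^{\oplus g}$ does not follow from the order $p^{Ng}$ alone (a group of that order killed by $p^N$ need not be of this shape); it follows from $\Ker(\check{\psi})=\psi(J[p^N])\simeq J[p^N]/H_N\simeq\Jhat[p^N]\simeq(\mu_{p^N})^{\oplus g}$, which is where your appeal to $\mu_{p^N}\subset k$ belongs.

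The third step, however, is a wrong approach, and the error is fatal rather than cosmetic. The word ``unramified'' in the statement of \autoref{lem:geometric_J} is a slip (the paper's proof is headed ``(Completely ramified)'', consistent with \autoref{def:piram} and with \autoref{explicitetale}, which needs exactly this): the assertion to prove is that $\check{\psi}$ is completely \emph{ramified} over $\Jbar$, i.e.\ admits no nontrivial subcovering unramified over $\Jbar$, so that $\Aut(\check{\psi})$ realizes $\pi_1(J)^{\geo}_{\ram}=\Ker(\sp)$. What you set out to verify --- that the N\'eron prolongation $\J_N\to\J$ is finite \'etale over $\Ok$ with \'etale closed fibre of group $(\Z/p^N)^{\oplus g}$ --- is false, and the ordinary-reduction hypothesis forces the opposite. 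Since $\Ker(\check{\psi})=\psi(\Jhat[p^N])$, the kernel of the prolonged isogeny is the image of the \emph{connected} part $\J[p^N]^{\circ}$, a finite flat group scheme of multiplicative type: its generic fibre is constant (as $\mu_{p^N}\subset k$), but its special fibre is infinitesimal of order $p^{Ng}$ in characteristic $p$. Hence the reduction $\ol{\check{\psi}}\colon\Jbar_N\to\Jbar$ is a purely inseparable isogeny of degree $p^{Ng}$, inducing the trivial \'etale covering of $\Jbar$; the ``delicate bookkeeping'' you defer to cannot be carried out. Structurally this had to fail: if $\check{\psi}$ were unramified over $\Jbar$ in the sense of \autoref{def:ur}, its class would survive under $\sp\colon\piab(J)^{\geo}\to\piab(\Jbar)^{\geo}$ and could not cut out the ramified part.

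The argument actually needed runs in the opposite direction, as in the paper: suppose $\check{\psi}$ contains a subcovering $\phi\colon A\to J$ unramified over $\Jbar$; by the Lang--Serre theorem $A$ is an abelian variety; by N\'eron functoriality $\phi$ extends to an isogeny of the N\'eron models whose reduction $\ol{\phi}$ is \'etale; since $\ol{\check{\psi}}$ factors through $\ol{\phi}$ and has separable degree one (its kernel being infinitesimal, as above), $\ol{\phi}$ has degree one, so $\phi$ is an isomorphism. None of this appears in your proposal, so the third assertion is left unproved.
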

\begin{proof}
	\textbf{(Abelian covering)} 
	It is known that 
	any isogeny on abelian varieties is finite flat (\cite[Proposition 5.2]{EMvdG}) 
	and we are working over a characteristic 0 field, hence 
	the isogeny $\check{\psi}:J_N\to J$ is finite \'{e}tale 
	(\cite[Proposition 5.6]{EMvdG}). 
	%We show this gives an abelian covering. 
	%We denote by $\Aut(\check{\psi})$ the automorphism group 
	%of $J_N$ over $J$ through $\check{\psi}$. 
	The map  
	$\Ker(\check{\psi}) \to \Aut(\check{\psi})$ which sends
	$\xi \in \Ker(\check{\psi})$ to the automorphism given by the translation by $\xi$ 
	is bijective, 
	because any non-constant homomorphism is the composition of an isogeny and a translation by some $\xi$ (\cite[Proposition 1.14]{EMvdG}). 
	Since $\Aut(\check{\psi})$ acts transitively on the fibers $\Ker(\check{\psi})$,  
	the covering $\check{\psi}$ is Galois with Galois group $\Aut(\check{\psi})\simeq \Ker(\check{\psi}) \simeq (\Z/p^N)^{\oplus g}$. 
	
	\sn
	\textbf{(Geometric covering)} 
	Next, we show that $\check{\psi}$ is a geometric covering of $J$. 
	As we recalled in \autoref{sec:Galois}, 
	using the zero $0_J \in J(k)$, 
	it suffices to show that the fiber $(J_N)_0$ over $0_J$ 
	\[
	\xymatrix{
	J_N\ar[d]_{\check{\psi}} & \ar[d]\ar[l] (J_N)_0 = J_N \times_{J} 0_J \\
	J & \ar[l]_{0_J} \Spec(k)
	}
	\]
	is completely split over $\Spec (k)$. 
	In fact, 
	we have $(J_N)_0  \simeq \ker(\check{\psi})$ 
	as schemes and the later $\Ker(\check{\psi})$ 
	is precisely the subgroup $\psi(\Jhat[{p^N]})$, which is $k$-rational by assumption. 
	Therefore, 
	$(J_N)_0$ is the sum of $k$-rational points, and hence  
	$\check\psi\colon J_N\to J$ is a geometric covering of $J$.

	\sn
	\textbf{(Completely ramified)} 
	Finally, we show that the geometric covering $\check{\psi}\colon J_N\to J$ is completely ramified over $\Jbar$. 
	Suppose that $\check{\psi}$ contains a sub covering $\phi\colon A\to J$ 
	unramified over $\Jbar$. 
	Since the isogeny $\check{\psi}$ maps $0$ in $J_N$ to $0$ in $J$, 
	there exists a rational point $e\in A(k)$ such that $\phi (e) = 0$. 
	From the Lang-Serre theorem (\cite[Theorem 10.36]{EMvdG}), 
	$A$ is an abelian variety. 
	Let $\A$ be the N\'eron model of $A$ and $\Abar$ its closed fiber. 
	By the functorial property of the N\'eron models (\cite[Section 7.3, Proposition 6]{BLR90})  
	there exists an isogeny $\Phi\colon \A\to \J$ which makes the following diagram commutative: 
	\begin{equation}
		\label{diag:phi}
		\vcenter{
		\xymatrix{
		A\ar[d]_{\phi} \ar[r] & \A \ar[d]_{\Phi}  & \Abar\ar[d]^{\ol{\phi}}\ar[l]\\ 
		J \ar[r] & \J & \ar[l] \Jbar.
	}}
	\end{equation}
	
	\begin{oneclaim}
		The isogenies $\ol{\phi}$ and $\Phi$ are \'etale. 
		In particular, in the correspondence \eqref{eq:1to1}, 
		$\phi$ comes from the above diagram \eqref{diag:phi} with the isogeny $\Phi\colon \A\to \J$ 
		of the N\'eron models.
	\end{oneclaim}
	\begin{proof}
		The kernel $\Ker(\Phi)$ of the induced isogeny $\Phi$ 
		is a finite group scheme (\cite[Proposition 5.2]{EMvdG}). 
		Consider the connected-\'etale sequence 
		$0 \to \Ker(\Phi)^{\circ} \to \Ker(\Phi) \to \Ker(\Phi)^{\et} \to 0$ 
		(\cite[Proposition 4.45]{EMvdG}). 
		We can factor $\Phi$ as a composition of two isogenies $\A \to \A/\Ker(\Phi)^{\circ} \xrightarrow{\Phi^{\et}}\J$. 
		In the same way, $\ol{\phi}$ can be written 
		$\Abar \to \Abar/\Ker(\ol{\phi})^{\circ} \xrightarrow{\ol{\phi}^{\et}} \Jbar$. 
		Putting $\A^{\et} := \A/\Ker(\Phi)^{\circ}$ and $\Abar^{\et} := \Abar/\Ker(\ol{\phi})^{\circ}$, 
		they make the following diagram commutative:
		\[
		\xymatrix@C=3mm@R=3mm{
		A\ar[rr]\ar[dd]_{\phi} \ar@{-->}[rd] & & \A\ar[rd] \ar[dd] & & \ar[ll] \Abar\ar[rd]\ar[dd] \\
		& A^{\et}\ar@{-->}'[r][rr]\ar@{-->}[ld]^-{\phi^{\et}} &  & \A^{\et}\ar[ld]^-{\Phi^{\et}} & & \ar@{->}'[l][ll]\Abar^{\et}\ar[ld]^-{\ol{\phi}^{\et}} \\
		J\ar[rr] & & \J  & & \ar[ll] \Jbar & ,\\ 
		}
		\]
		where $\phi^{\et}\colon A^{\et}\to J$ is given by taking the generic fiber of $\Phi^{\et}$. 
		Here, $\Phi^{\et}$ and $\phi^{\et}$ are isogenies whose kernels are \'etale group schemes
		so that $\Phi^{\et}$ and $\phi^{\et}$ are \'etale (\cite[Proposition 5.6]{EMvdG}). 
		From this, $\phi^{\et}$ is an abelian covering of $J$ which is unramified over $\Jbar$. 
		
		Since $\phi$ is unramified over $\Jbar$ (and $A\to A^{\et}$ is not unramified over $\Abar^{\et}$), 
		we have $A\simeq A^{\et}$. 
		This implies that $\A \simeq \A^{\et}$ and $\Abar\simeq \Abar^{\et}$ and the assertions follow.
	\end{proof} 
	
	Let, $\J_N$ be the N\'eron model of $J_N$. 
	Extending the diagram \eqref{diag:phi}, 
	we have the following commutative diagram: 
	\[
	\xymatrix{
	J_N \ar[r]\ar[d]\ar@/_10mm/[dd]_{\check{\psi}}   & \ar[d]\J_N \ar@{-->}@/_10mm/[dd] & \ar[l] \Jbar_N \ar@/^10mm/[dd]^{\ol{\check{\psi}}}\ar[d] \\
	A \ar[r]\ar[d]^{\phi} & \A \ar[d] & \ar[l] \Abar\ar[d]_{\ol{\phi}} \\
	J\ar[r] & \J & \ar[l]\Jbar.
	}
	\]
	From the functorial property of N\'eron models, 	 % \cite[Section 7.3, Proposition 6]{BLR} 
	the above diagram is commutative. 
	%and this induces also an isogeny $\ol{\check{\psi}}\colon \Jbar_N\to \Jbar$. 
	Here, $\ol{\phi}$ is \'etale. 
	From the construction of $J_N$, 
	$\ol{\check{\psi}}\colon \Jbar_N\isomto  \Jbar$ is an isomorphism 
	and so is $\ol{\phi}$. 
	This implies that $\phi\colon A\to J$ is an isomorphism. 
	Therefore, $\check{\psi}$ does not contain sub abelian coverings of $J$ which are unramified over $\Jbar$. 
\end{proof}

It follows (see \emph{e.g.},\ \cite[Section 9]{MilneJac}) that the pull-back 
\[
	\xymatrix{
	X_N\ar[r]\ar[d]_{\varphi} & J_N\ar[d]^{\check{\psi}}\\
	X \ar[r]^{f^x} & J
	}
\] 
of $\check{\psi}$ along $f^x\colon X\to J$ defines an \'etale covering of $X$. 
From the construction of $X_N$ and the universal property of the Albanese map $f^x$, we have 
$\Aut(\check{\psi}) \simeq \Aut(\varphi)$.

\begin{thm}\label{explicitetale}	
	Suppose we have $\Ker(\dX) \simeq (\Z/p^{N_J})^{\oplus g}$ with $N := N_J \ge 1$. 
	The \'etale covering $\varphi:X_N\to X$ 
	is a geometric covering 
	which is completely ramified over $\Xbar$. 
	Furthermore, the composition
	\[
		\piabXgeoram\inj \piabXgeo\surj \Aut(\varphi)
	\]
	is bijective.
\end{thm}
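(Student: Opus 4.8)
The plan is to transport both assertions to the Jacobian and then close by an order count. By the lemma preceding this theorem, the Albanese map $f^x$ induces isomorphisms $\piabXgeo\simeq\pi_1(J)^{\geo}$ and $\piabXgeoram\simeq\pi_1(J)^{\geo}_{\ram}$ compatible with the specialization maps $\sp$. Since $\varphi\colon X_N\to X$ is the pullback of $\check{\psi}\colon J_N\to J$ along $f^x$, functoriality of $\pi_1$ identifies the open subgroup of $\piabXgeo$ classifying $\varphi$ with the preimage of the one classifying $\check{\psi}$; as $\check{\psi}$ is geometric (\autoref{lem:geometric_J}) the latter lies in $\pi_1(J)^{\geo}$, and the identification restricts to the isomorphism $\Aut(\varphi)\simeq\Aut(\check{\psi})$ already noted. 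First I would check that $\varphi$ is geometric on its own: its fibre over $x$ is $X_N\times_X\{x\}\simeq J_N\times_J\{0_J\}=\Ker(\check{\psi})$, which is completely split over $\Spec(k)$ by \autoref{lem:geometric_J}. Thus $\varphi$ is classified by a surjection $\piabXgeo\surj\Aut(\varphi)$, and under the above identifications the composite $\piabXgeoram\inj\piabXgeo\surj\Aut(\varphi)$ becomes $\pi_1(J)^{\geo}_{\ram}\inj\pi_1(J)^{\geo}\surj\Aut(\check{\psi})$, so it is enough to treat the latter.

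The key step will be to read ``completely ramified over $\Jbar$'' as a surjectivity statement. Writing $Q:=\Aut(\check{\psi})$ and letting $R\subseteq Q$ denote the image of $\pi_1(J)^{\geo}_{\ram}=\Ker(\sp)$, the correspondence \eqref{eq:1to1} together with the characterisation of complete ramification recalled in \autoref{sec:Galois} shows that a nontrivial sub covering of $\check{\psi}$ unramified over $\Jbar$ is the same as a nontrivial quotient $Q\surj G$ whose composite with $\pi_1(J)^{\geo}\surj Q$ factors through $\sp$, equivalently kills $\Ker(\sp)$; such a $G$ is exactly a nontrivial quotient of $Q/R$, so $\check{\psi}$ admits no such sub covering precisely when $R=Q$. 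Since $\check{\psi}$ is completely ramified over $\Jbar$ by \autoref{lem:geometric_J}, I conclude $R=Q$, i.e.\ $\pi_1(J)^{\geo}_{\ram}\surj\Aut(\check{\psi})$ is surjective.

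Finally I would upgrade surjectivity to bijectivity by counting. The hypothesis $\Ker(\dX)\simeq(\Z/p^{N_J})^{\oplus g}$ is finite, so $\Ker(\dX)_{\fin}=\Ker(\dX)$ and \eqref{eq:mu} gives $\piabXgeoram\simeq(\Z/p^{N})^{\oplus g}$, hence $\pi_1(J)^{\geo}_{\ram}\simeq(\Z/p^{N})^{\oplus g}$; on the other side $\Aut(\check{\psi})\simeq(\Z/p^{N})^{\oplus g}$ by \autoref{lem:geometric_J}. Both groups have order $p^{Ng}$, so the surjection of the previous paragraph is a bijection, and transporting back along the identifications of the first paragraph shows the composite $\piabXgeoram\inj\piabXgeo\surj\Aut(\varphi)$ is bijective. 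In particular it is surjective, so the image of $\piabXgeoram$ in $\Aut(\varphi)$ is all of $\Aut(\varphi)$; running the equivalence of the second paragraph on $X$ shows that $\varphi$ has no nontrivial sub covering unramified over $\Xbar$, i.e.\ $\varphi$ is completely ramified over $\Xbar$. The main obstacle is the second paragraph: making precise the dictionary between an unramified sub covering and the group-theoretic condition $R=Q$, and verifying that the Albanese pullback really matches the two specialization sequences so that the ramified parts correspond are the conceptual points, both essentially formal once the statements of the preceding lemma and \autoref{lem:geometric_J} are in hand.
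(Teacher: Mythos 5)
Your proposal is correct and takes essentially the same route as the paper's proof: transport both assertions to the Jacobian via the preceding lemma, invoke \autoref{lem:geometric_J} for the fact that $\check{\psi}$ is geometric, completely ramified over $\Jbar$, and has $\Aut(\check{\psi})\simeq(\Z/p^N)^{\oplus g}$, then conclude bijectivity by comparing orders using the hypothesis $\Ker(\dX)\simeq(\Z/p^{N})^{\oplus g}$ together with \eqref{eq:mu}. The only differences are cosmetic: you verify that $\varphi$ is geometric by computing its fibre over $x$ directly as $\Ker(\check{\psi})$ rather than by the paper's diagram chase, and you make explicit the group-theoretic dictionary (complete ramification over the special fibre $\Leftrightarrow$ surjectivity of the map restricted to the ramified part) that the paper uses implicitly.
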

\begin{proof}
	From \autoref{lem:geometric_J}, the right vertical map in the following commutative diagram is surjective
	\[
	\xymatrix{
	\Aut(\varphi) \ar[r]^{f^x}_{\simeq} & \Aut(\check{\psi}) \\
	\piabXgeo \ar[r]^{f^x}_{\simeq} \ar[u] & \pi_1(J)^{\geo}.\ar@{->>}[u].
	}
	\]
	Thus, the left vertical map is surjective, 
	and hence $\varphi\colon X_N\to X$ is a geometric (abelian) covering of $X$. 
	
	Recall that we have $(\Z/p^N)^{\oplus g} \simeq \Ker(\dX) \simeq \piabXgeoram$ 
	and $\Aut(\varphi)\simeq \Aut(\check{\psi}) \simeq (\Z/p^N)^{\oplus g}$ (\autoref{lem:geometric_J}). 
	Consider the following commutative diagram:
	\[
	\xymatrix{
	\piabXgeoram \ar@{^{(}->}[r]\ar[d]^{\simeq} &  \piabXgeo \ar@{->>}[r]\ar[d]^{\simeq} & \Aut(\varphi)\ar[d]^{\simeq} \\
	\pi_1(J)^{\geo}_{\ram} \ar@{^{(}->}[r] &  \pi_1(J)^{\geo} \ar@{->>}[r] & \Aut(\check{\psi}). 
	}
	\]
	From \autoref{lem:geometric_J}, the composition of the bottom maps is bijective, 
	so is the top map. 
	This implies that $\varphi\colon X_N\to X$ is completely ramified over $\Xbar$ and is maximal.  
\end{proof}

%\begin{rem}
%\label{rem:piabXgeo}
%For $N = N_J$, consider the commutative diagram with exact rows: 
%\[
%\xymatrix{
%0 \ar[r] & \piabXgeoram \ar[r]\ar@{->>}[d]^{\mu_X^{-1}} & \piabXgeo \ar[r]\ar@{->>}[d]^{\tau_X^{-1}} & \piabXbargeo \ar@{->>}[d]\ar[r] & 0\, \\
%0 \ar[r] & \Ker(\dX)/p^N\ar[r] & K(k;J,\Gm)/p^N \ar[r] & \Jbar(\Fk)/p^N\ar[r] & 0.
%}
%\]
%In fact, the bottom sequence splits (\cite[Proof of Theorem 4.8]{Hir21}). 
%Therefore, if we assume 
%$\Ker(\dX) \simeq  \piabXgeoram \simeq (\Z/p^{N})^{\oplus g}$ as in \autoref{explicitetale}, 
%then $\mu_X^{-1}$ in the above diagram is bijective and 
%the top sequence also splits. 
%The construction of the Hilbert class fields in the class field theory of function fields of one variable over finite fields 
%such as the theory of Drinfeld modules 
%together with \autoref{explicitetale} yields the \emph{geometrically maximal covering $\wt{X}$ of $X$} 
%in the sense that every \'etale covering of $X$ 
%is a sub covering of $\wt{X}\otimes k^{\ab} \to X$ (\Cf \cite[Definition 2.2.2]{YY20}).
%\end{rem}

\begin{rem}
	The assumption in \autoref{explicitetale} holds 
	if we have $N_J = M^{\ur}$ (see \autoref{rem:M}). 
	In \autoref{thm:2.9} below, we also consider elliptic curves 
	which satisfy this assumption. 
\end{rem}

\subsection*{Products of curves}
The above results can be extended to products of curves. 
For a product $X = X_1\times \cdots\times X_d$ of 
smooth and projective curves $X_i$ over $k$ 
with good reduction and $X_i(k)\neq \emptyset$ for all $i$, 
we have a short exact sequence
$0 \to V(X) \to SK_1(X) \onto{N} k^{\times} \to 0$  
defined similarly as in \eqref{eq:rho}. 
There is a commutative diagram  
\begin{equation}
	\xymatrix@R=4mm{
  	V(X) \ar[r]^-{\simeq}\ar[d]^{\tau_X} & \ds \bigoplus_{i=1}^dV(X_i)\ar[d]^{\oplus \tau_{X_i}}  \oplus \wt{V}(X)\\ 
  	\piabXgeo   \ar[r]^-{\simeq }&  \ds \bigoplus_{i=1}^d \piab(X_i)^{\geo}, 
  	}
\end{equation}
where $\wt{V}(X)$ is a divisible group 
(\cite[Proposition 1.7 and Corollary 2.5, see also the proof of Theorem 1.1]{Yam09}).
From the decomposition of $\VX$, 
one define the boundary map 
\[
	\d_X \colon  V(X)\xrightarrow{\mathrm{projection}} 
	\bigoplus_{i=1}^d V(X_i)\xrightarrow{\oplus \d_{X_i}} \bigoplus_{i=1}^d A_0(\Xbar_i)
	\simeq \bigoplus_{i=1}^d \Jbar_i(\Fk),
\]
where $\Jbar_i$ is the Jacobian variety of the special fiber $\Xbar_i$ for each $i$.
Here, the target of the boundary map $\dX$ can be considered 
as the Albanese variety $\Alb(\Xbar)(\Fk) = \bigoplus_i \Jbar_i(\Fk)$, 
where $\Xbar = \Xbar_1 \times \cdots \times \Xbar_d$. 
This induces the commutative diagram with horizontal exact sequence: 
\[
	\xymatrix@R=4mm{
	0\ar[r] & \Ker(\dX) \ar[r]\ar@{-->>}[d]^{\mu_X} & V(X)\ar[r]\ar@{->>}[d]^{\tau_X} &  \ds\bigoplus_{i=1}^d \Jbar_i(\Fk)\ar[r] \ar[d]_{\simeq}^{\oplus \rho_{\Xbar_i}} & 0\\
	0\ar[r] & \piabXgeoram \ar[r] & \piabXgeo \ar[r]&  \ds \bigoplus_{i=1}^d \piab(\Xbar_i)^{\geo} \ar[r] & 0. 
	}
\]
From the top horizontal sequence, 
we have a decomposition $\Ker(\dX)\simeq \Ker(\dX)_{\fin}\oplus \Ker(\dX)_{\div}$ 
(\autoref{lem:RS3.4.4} (iii)), 
with $\Ker(\dX)_{\fin} \simeq \bigoplus_i \Ker(\d_{X_i})_{\fin}$ and 
$\Ker(\dX)_{\div} = \wt{V}(X)$. 
Since $\mu_X$ induces an isomorphism 
\begin{equation}
	\Ker(\dX)_{\fin} \simeq \bigoplus_{i=1}^d\Ker(\d_{X_i})_{\fin} \isomto 
	\bigoplus_{i=1}^d\piab(X_i)^{\geo}_{\ram} \simeq \piab(X)^{\geo}_{\ram}, 
\end{equation}
\autoref{thm:main} gives  the following corollary. 

\begin{cor}
	Let $X = X_1\times \cdots\times X_d$ be a 
	product of smooth and projective  curves over $k$ 
	 with good reduction, and $X_i(k)\neq \emptyset$ for all $1\le i\le d$. 
	Assume that the Jacobian variety $\Jbar_i := \Jac(\Xbar_i)$ has ordinary reduction for each $1\le i \le d$. 
	Then, 
	there are surjective homomorphisms 
	\[
	\bigoplus_{i=1}^d (\Z/p^{\Mur})^{\oplus g_i} 
	\surj \piabXgeoram \surj \bigoplus_{i=1}^d (\Z/p^{N_{J_i}} )^{\oplus g_i},
	\]
	where $g_i = \dim(J_i)$.
\end{cor}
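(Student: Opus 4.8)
The plan is to reduce the statement to the single-curve result of \autoref{thm:main}, applied one factor at a time. The essential structural input has already been recorded just above: the isomorphism
\[
\Ker(\dX)_{\fin} \simeq \bigoplus_{i=1}^d \Ker(\d_{X_i})_{\fin} \isomto \bigoplus_{i=1}^d \piab(X_i)^{\geo}_{\ram} \simeq \piabXgeoram
\]
rests on the decomposition $\VX \simeq \bigoplus_{i=1}^d V(X_i) \oplus \wt{V}(X)$ with $\wt{V}(X)$ divisible, so that the ``cross terms'' are absorbed into the divisible part $\Ker(\dX)_{\div}$ and contribute nothing to the finite part. This is the only ingredient that genuinely uses the product structure, and it is what makes the ramified geometric fundamental group of a product of curves split as a direct sum over the factors.

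First I would check that each factor $X_i$ satisfies the hypotheses of \autoref{cor:main}: it is a projective smooth curve over $k$ with $X_i(k) \neq \emptyset$, it has good reduction, and its Jacobian $\Jbar_i = \Jac(\Xbar_i)$ is ordinary, whence $J_i = \Jac(X_i)$ has good ordinary reduction. Applying \autoref{thm:main} to the abelian variety $A = J_i$ (equivalently, \autoref{cor:main} to $X_i$) therefore produces, for each $i$, surjective homomorphisms
\[
(\Z/p^{\Mur})^{\oplus g_i} \surj \Ker(\d_{J_i})_{\fin} \simeq \piab(X_i)^{\geo}_{\ram} \surj (\Z/p^{N_{J_i}})^{\oplus g_i},
\]
where $g_i = \dim(J_i)$ and the middle identification comes from the single-curve comparison $\Ker(\d_{X_i})_{\fin} \simeq \Ker(\d_{J_i})_{\fin}$ together with the reciprocity isomorphism $\mu_{X_i}$. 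I would emphasize that the exponent $\Mur$ appearing in the upper bound depends only on the base field $k$, so the \emph{same} value occurs for every factor, whereas the lower-bound exponent $N_{J_i}$ varies with $i$.

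Finally I would take the direct sum of these surjections over $i = 1, \ldots, d$ and compose with the displayed isomorphism $\piabXgeoram \simeq \bigoplus_{i=1}^d \piab(X_i)^{\geo}_{\ram}$ to obtain the asserted chain
\[
\bigoplus_{i=1}^d (\Z/p^{\Mur})^{\oplus g_i} \surj \piabXgeoram \surj \bigoplus_{i=1}^d (\Z/p^{N_{J_i}})^{\oplus g_i}.
\]
I do not anticipate any substantive obstacle here: all the analytic work is already contained in \autoref{thm:main}, and the only delicate point is the divisibility of $\wt{V}(X)$, which guarantees that $\Ker(\dX)_{\fin}$ is precisely the direct sum of the finite parts of the factors. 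The remainder is bookkeeping with direct sums.
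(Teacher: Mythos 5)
Your proposal is correct and follows essentially the same route as the paper: the paper establishes the decomposition $\VX \simeq \bigoplus_{i=1}^d V(X_i)\oplus \wt{V}(X)$ (with $\wt{V}(X)$ divisible, via Yamazaki), deduces $\Ker(\dX)_{\fin}\simeq \bigoplus_{i=1}^d \Ker(\d_{X_i})_{\fin}\isomto \piabXgeoram$, and then invokes \autoref{thm:main} factor by factor, exactly as you do. Your identification of the divisibility of $\wt{V}(X)$ as the one genuinely product-specific ingredient, with the rest being direct-sum bookkeeping, matches the paper's argument.
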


%%%%%%%%%%%%%%%%%%%%%
\section{Elliptic curve}
\label{sec:EC}
In this section, 
we consider an elliptic curve $X = E$ over $k$ 
which has good reduction. 
Recalling from \autoref{lem:dec_KerdX}, 
we have a decomposition $\Ker(\dE) \simeq \Ker(\dE)_{\fin}\oplus \Ker(\dE)_{\div}$.
We will obtain a sharp computation of the group $\Ker(\dE)_{\fin}$ under 
some mild assumptions on $E$. 
\emph{From now on} we will simply write $N$ for the integer $N_E$.

\subsection*{Good ordinary reduction}
First, we assume that $E$ has good ordinary reduction.  \autoref{thm:main} gives surjections
\[\Z/p^{M^{\ur}}\twoheadrightarrow\ker(\dE)_{\fin}\twoheadrightarrow\Z/p^N.\] 
Recall that we have the invariants 
\begin{equation}\label{def:Nhat}
	\Nhat  = \max \set{m\ge 0 | \Ehat[p^m]\subset \Ehat(k)}, \ 
	\mbox{and}\ M = \max\set{m\ge 0 | \mu_{p^m}\subset k}. 
\end{equation}
In general, we have $N \le \Nhat \le \Mur$ as noted in \autoref{rem:upper_E}.

\begin{lem}\label{lem:Tcoinv}
	Let $G\subset G_k$ be a closed subgroup, 
	and $T$ a  free  $\Zp$-module of rank $1$ with non-trivial $G$-action 
	$\chi:G \to \Aut(T)$.
	Then, $T_G \simeq \plim_{n}[(T/p^n)_G] \simeq T/p^{M_G}$,
	where $M_G = \max\set{m| \mbox{$G$ acts on $T/p^m$ trivially}}$.
\end{lem}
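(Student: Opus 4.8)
The plan is to reduce the computation of the coinvariants to a single ideal of $\Zp$. Since $T$ is free of rank one over $\Zp$, its $\Zp$-linear automorphism group is $\Aut(T) \simeq \Zptimes$, so the action is encoded by a character $\chi \colon G \to \Zptimes$ with $g\cdot t = \chi(g)\,t$. By the definition of coinvariants, $T_G = T/I_G T$, where $I_G T$ is the $\Zp$-submodule generated by the elements $(g-1)t = (\chi(g)-1)t$ for $g\in G$ and $t\in T$. Under the identification $T\simeq \Zp$ this reads $T_G \simeq \Zp/J$, where $J\subseteq \Zp$ is the ideal generated by $\set{\chi(g)-1 | g\in G}$.

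First I would pin down $J$. As $\Zp$ is a discrete valuation ring, $J = (p^a)$ with $a := \min_{g\in G}\ordp(\chi(g)-1)$; the minimum is attained since it is taken over a nonempty set of non-negative integers, and the non-triviality of $\chi$ forces $\chi(g)\neq 1$ for some $g$, so $a<\infty$ and $T_G \simeq \Z/p^a$ is finite. Next I would identify $a$ with $M_G$: by definition $G$ acts trivially on $T/p^m$ exactly when $\chi(g)\equiv 1 \pmod{p^m}$ for all $g$, i.e.\ $\ordp(\chi(g)-1)\ge m$ for every $g\in G$; the largest such $m$ is therefore $\min_{g}\ordp(\chi(g)-1) = a$, giving $M_G = a$ and $T_G \simeq T/p^{M_G}$.

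For the limit description I would rerun the same computation modulo $p^n$. Reducing mod $p^n$, the element $\chi(g)-1$ generates in $\Z/p^n$ the ideal $(p^{\min(\ordp(\chi(g)-1),\,n)})$, so the joint ideal is $(p^{\min(a,n)})$ and hence $(T/p^n)_G \simeq \Z/p^{\min(a,n)}$. The transition maps of the system $\set{(T/p^n)_G}$ are the surjections induced by the reductions $T/p^{n+1}\surj T/p^n$; for $n\ge a$ every term equals $\Z/p^a$, and a surjection $\Z/p^a\surj \Z/p^a$ is an isomorphism, so the system stabilizes and $\plim_n (T/p^n)_G \simeq \Z/p^a \simeq T_G$, as claimed.

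I do not expect a serious obstacle here: the argument is essentially the structure theory of ideals in the discrete valuation ring $\Zp$. The only points requiring care are that the minimal valuation $a$ is genuinely attained (guaranteed by the non-triviality of $\chi$, which also makes all the groups in sight finite) and that the finite-level and limit computations are compatible, which follows formally once the transition maps are seen to stabilize to isomorphisms.
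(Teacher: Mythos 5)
Your proof is correct, and it takes a genuinely different route from the paper's. The paper argues level by level: the mod $p^n$ representation has finite cyclic image, so choosing a generator $\sigma_n$ and writing $\chi_n(\sigma_n) = a_n = 1 + p^{M_G}l_n$, the maximality of $M_G$ forces $p \nmid l_n$, whence $I_G(T/p^n) = p^{M_G}(T/p^n)$ and $(T/p^n)_G \simeq T/p^{M_G}$ for $n \ge M_G$; the remaining identification $T_G \simeq \plim_n[(T/p^n)_G]$ is then not computed but imported from the appendix (\autoref{prop:coinv}), whose proof runs through profinite group homology, finiteness of $H_1$, Mittag--Leffler, and vanishing of ${\plim}^1$. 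You instead compute both sides directly from the ideal theory of the discrete valuation ring $\Zp$: $T_G$ is the quotient of $T \simeq \Zp$ by the ideal generated by all $\chi(g)-1$, namely $(p^a)$ with $a = \min_{g \in G}\ordp(\chi(g)-1)$, which you identify with $M_G$; the same computation mod $p^n$ gives $(T/p^n)_G \simeq \Z/p^{\min(a,n)}$, and stabilization of the surjective transition maps yields the inverse limit with no homological input. (One implicit point worth making explicit: for a profinite module one should quotient by the \emph{closed} submodule generated by the elements $(g-1)t$, but this is harmless here since every ideal of $\Zp$ is closed.) What each approach buys: the paper's argument reuses \autoref{prop:coinv}, which it needs anyway for higher-rank Tate modules (e.g.\ in the proof of \autoref{thm:2.9}); yours is self-contained and more elementary, and it also sidesteps a wrinkle in the paper's argument, since the image of $G$ in $(\Z/p^n)^{\times}$ need not be cyclic when $p=2$, while your ideal-theoretic computation never requires a single generator.
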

\begin{proof}
	Put $T_n := T/p^n$ and $m := M_G$. 
	Take a generator $(z_n)$ of $\plim_n T_n = T$ with $z_n \in T_n$. 
%	$0 \to p^m T_n \to T_n\to T_m \to 0$ induces 
%	$(p^mT_n)_G\to (T_n)_G \to (T_m)_G = T_m \to 0$.
	We will show that, for any $n\ge m$, the natural map $T_n\surj T_m$ induces $(T_n)_G \isomto T_m$. 
	The mod $p^n$-representation $\chi_n:G\xrightarrow{\chi} \Aut(T) \surj \Aut(T_n)$ factors through a finite cyclic subgroup $G_n \subset G$. 
	Fix a generator $\sigma_n$ of $G_n$. 
	Thus, $(T_n)_G = T_n/I_G(T_n)$, where $I_G(T_n) := \Braket{(\chi_n(\sigma_n) -1)x | x\in T_n}$.
	Then $\chi_n(\sigma_n)(z_n) = a_nz_n$ for some $a_n \in (\Z/p^n)^{\times}$. 
	Since $G$ acts on $T_m$ trivially, 
	$a_nz_n \bmod p^m = z_n \bmod p^m$ in $T_m$ and hence $a_n \bmod p^m = 1$. 
	Write $a_n -1 = p^m l_n$. This equality means precisely that the subgroup 
	$I_G(T_n) \subset p^{m}T_n$. 
	To prove the reverse inclusion it is enough to show that $(l_n,p)=1$. 
	Assume for contradiction that $p \mid l_n$. 
	This yields 
	\[
	\chi_n(\sigma_n)z_n \bmod p^{m+1} = a_nz_n \bmod p^{m+1} = z_n \bmod p^{m+1}\ \mbox{in $T_{m+1}$}.
	\]
	But this means that $G$ acts trivially on $T_m$, 
	which contradicts the definition of the integer $m=M_G$. 
	
	To finish the proof we consider the following commutative diagram with exact rows,
	\[
		\xymatrix{
		0\ar[r] & I_G(T_n) \ar[r]\ar[d] & T_n \ar[r]\ar@{=}[d] & (T_n)_G\ar[r]\ar[d] & 0\\
		0\ar[r] & p^mT_n\ar[r]& T_n\ar[r] & T_m\ar[r] & 0.
		}
	\] 
	The first two vertical maps are equalities, giving the desired isomorphism $(T_n)_G\simeq T_m$. 
	In the appendix we prove an isomorphism $T_{G}\simeq\varprojlim\limits_n \left[(T_n)_{G}\right]$ (\Cf \autoref{prop:coinv}). 
\end{proof}

\begin{prop}\label{prop:upper_E}
	There are surjective homomorphisms
	\[
	\Z/p^{\Nhat} \surj \Ker(\dE)_\fin \surj \Z/p^N.
	\]
	In particular, we have $\Ker(\dE)_{\fin} \simeq \Z/p^N$ if $N = \Nhat$. 
	The inequality $N\le \Nhat$ can be strict. 
\end{prop}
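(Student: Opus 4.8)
The plan is to refine the general upper bound $\Z/p^{\Mur}\surj\Ker(\dE)_{\fin}$ supplied by \autoref{thm:main} (taken with $g=\dim E=1$) by exploiting the rank-one structure of the connected part of the Tate module. First I would note that, since $E$ is its own Jacobian and has good reduction, the class field theory recalled after \eqref{diag:sm} forces the Galois symbol $s_m\colon K(k;E,\Gm)/m\to H^2(k,E[m]\otimes\mu_m)$ to be bijective for every $m\ge 1$. Hence the hypothesis of \autoref{bloch} is satisfied and we obtain
\[
\Ker(\dE)_{\fin}\simeq\Im\big((T_p(\E)^{\circ})_{G_k}\xrightarrow{\iota}T_p(E)_{G_k}\big),
\]
so that $\Ker(\dE)_{\fin}$ is in particular a quotient of $(T_p(\E)^{\circ})_{G_k}$.

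Next I would compute $(T_p(\E)^{\circ})_{G_k}$. Since $E$ has good \emph{ordinary} reduction and $\dim E=1$, the connected part is the Tate module of the formal group, $T_p(\E)^{\circ}\simeq T_p(\Ehat)$ (\Cf \eqref{eq:conn part}), a free $\Zp$-module of rank $1$ on which $G_k$ acts through a character $\chi\colon G_k\to\Zptimes=\Aut(T_p(\Ehat))$. This $\chi$ is non-trivial, for otherwise all $p$-power torsion of $\Ehat$ would be $k$-rational and $\Nhat=\infty$, contradicting the finiteness of the torsion of $\Ehat(k)\subset E(k)$. Thus \autoref{lem:Tcoinv} applies with $T=T_p(\Ehat)$ and $G=G_k$, yielding $(T_p(\Ehat))_{G_k}\simeq T_p(\Ehat)/p^{M_{G_k}}$, where $M_{G_k}$ is the largest $m$ with $G_k$ acting trivially on $T_p(\Ehat)/p^m=\Ehat[p^m]$. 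As the $G_k$-action on $\Ehat[p^m]$ is trivial exactly when $\Ehat[p^m]\subset\Ehat(k)$, I get $M_{G_k}=\Nhat$ and hence $(T_p(\Ehat))_{G_k}\simeq\Z/p^{\Nhat}$.

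Combining the two steps gives the surjection $\Z/p^{\Nhat}\simeq(T_p(\Ehat))_{G_k}\surj\Ker(\dE)_{\fin}$, while the lower bound $\Ker(\dE)_{\fin}\surj\Z/p^N$ is \autoref{thm:main} with $g=1$. For the ``in particular'' clause, when $N=\Nhat$ the composite $\Z/p^N\surj\Ker(\dE)_{\fin}\surj\Z/p^N$ is a surjective endomorphism of the finite group $\Z/p^N$, hence an isomorphism, which forces $\Ker(\dE)_{\fin}\simeq\Z/p^N$. To see that $N<\Nhat$ genuinely occurs I would exhibit an elliptic curve for which the formal $p^{\Nhat}$-torsion is $k$-rational but the \'etale part of $E[p^{\Nhat}]$ is not; such curves appear in the explicit computations below (\Cf \autoref{thm:2.9}).

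The step I expect to be the main obstacle is the identification $M_{G_k}=\Nhat$ together with the justification that \autoref{lem:Tcoinv} may legitimately be invoked, namely that $T_p(\E)^{\circ}$ is genuinely free of rank one with non-trivial $G_k$-action; this is where the good ordinary hypothesis and the relation between the formal torsion and $\Ehat(k)$ are really used. Once this coinvariant computation is secured, both surjections follow formally from \autoref{bloch} and \autoref{thm:main}.
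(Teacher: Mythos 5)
Your proof is correct and follows essentially the same route as the paper: both identify $\Ker(\dE)_{\fin}$ with $\Im\big((T_p(\E)^{\circ})_{G_k}\xrightarrow{\iota} T_p(E)_{G_k}\big)$ via \autoref{bloch} (with injectivity of the Galois symbol supplied by \eqref{diag:sm}) and then compute $(T_p(\E)^{\circ})_{G_k}\simeq \Z/p^{\Nhat}$ via \autoref{lem:Tcoinv}. The only cosmetic differences are that you get the lower bound by citing \autoref{thm:main} with $g=1$ (exactly how the paper frames it just before the proposition, whereas the proof itself re-derives it by reducing the image modulo $p^N$), and that you spell out the non-triviality of the $G_k$-action on $T_p(\Ehat)$ required to invoke \autoref{lem:Tcoinv}, a point the paper leaves implicit.
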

\begin{proof}  
	From \autoref{bloch}
	 we have an isomorphism
	\[
	\Ker(\dE)_\fin \simeq \Im((T_p(\E)^{\circ})_{G_k}\xrightarrow{\iota} T_p(E)_{G_k}).
	\]  
	Note that the injectivity of the Galois symbol map follows from \eqref{diag:sm}. 
	By the definition of $N$, $G_k$ acts on $E[p^N]$ trivially and so does on $\Ehat[p^N]$. 
	We obtain 
	\[
	 \Im((T_p(\E)^{\circ})_{G_k}\xrightarrow{\iota} T_p(E)_{G_k}) \simeq \Im(\Ehat[p^N] \inj E[p^N]) \simeq \Z/p^N.
	\]
	From \autoref{lem:Tcoinv}, 
	$(T_p(\E)^{\circ})_{G_k} \simeq \Ehat[p^{\Nhat}] \simeq \Z/p^{\Nhat}$ 
	and this implies the assertion.
	It is clear that if $\Ehat[p^{\Nhat}]\not\subset\Ebar(\Fk)$,  the inequality $N\leq\Nhat$ becomes strict. 
\end{proof}

Let $\E$ be the N\'{e}ron model of $E$. 
For every $n\geq 1$, consider the connected-\'{e}tale exact sequence of finite flat group schemes over $\Spec(\Ok)$ (\Cf \eqref{seq:conn-et_T}), 
\begin{equation}\label{ses2}
	0\rightarrow\E[p^n]^\circ\rightarrow\E[p^n]\rightarrow\E[p^n]^{\et}\rightarrow 0.
\end{equation}
When $E$ has complex multiplication, \eqref{ses2} splits 
(\cite[A.2.4]{Ser89}). 
% From \cite[A.2.4]{Ser89}), 
%the short exact sequence $0 \to \Qp(1) \to V_p(E) \to \Qp \to 0$ 
%of $G_k$-modules splits. 
%This implies also \eqref{ses2} splits. In fact, 
%an image of $G_k\to GL_2(\Zp) \inj GL_2(\Qp)$ can be diagonal, 
%then the integral image in $GL_2(\Zp)$ is diagonal. 
Equivalently, the $G_k$-action on $E[p^n]$ is
diagonal for all $n \ge 1$. 
We will refer to this as the \textbf{semisimple case}. 
In general \eqref{ses2} 
does not split and the $G_k$-action on $E[p^n]$ is upper triangular. 
Over $k^{\ur}$ the sequence \eqref{ses2} becomes 
\begin{equation}\label{ses6}
	0\rightarrow \mu_{p^n}\rightarrow \E[p^n]\rightarrow \Z/p^n\rightarrow 0.
\end{equation}  Passing to the limit we obtain a short exact sequence of continuous $G_{k^{\ur}}$-modules 
\begin{equation}
\label{ses1}
	0 \to \Z_p(1)\to T_p(E) \to \Zp\to 0.
\end{equation}
When $E$ has complex multiplication, \eqref{ses1} splits; that is, $T_p(E)$ is semisimple as $G_{k^\ur}$-module.
Suppose we are in the non-semisimple case. Assume additionally that $\mu_{p^n}\subset k$ and that $E[p^n] \subset E(\Fk)$ for some $n$. 
Then the sequence \eqref{ses6} is given over $k$ 
%becomes
%\begin{equation}\label{ses6}
%	0\rightarrow \mu_{p^n}\rightarrow \E[p^n]\rightarrow \Z/p^n\rightarrow 0.
%\end{equation}   
In particular, the group scheme $\E[p^n]$ defines an element of $\mathcal{E}xt^1_{\mathcal{O}_k}(\Z/p^n,\mu_{p^n})\simeq H^1_{fppf}(\Ok,\mu_{p^n})$. This group is isomorphic to $\Ok^{\times}/p^n$ and therefore the extension $\E[p^n]$ (or equivalently the Galois module $E[p^n]$) corresponds to a unit $u\in\Ok^{\times}/p^n$. 
That is, the sequence \eqref{ses6} becomes split after extending to the finite extension $k(\sqrt[p^n]{u})$. The unit $u$ is known as the \textbf{Serre-Tate parameter} of $E$ and it is trivial when $E$ has complex multiplication. For more information we refer to \cite[Chapter 8, Section 9]{KM85}.

\begin{thm}\label{thm:2.9}
	Let $\rho_n\colon G_k\to \Aut(E[p^n])$ be the mod $p^n$ representation arising from 
	$E[p^n]$ for any $n\ge 1$. 
	\begin{enumerate}
	\item If $\rho_{\Nhat}$ is semisimple, then $\Ker(\dE)_{\fin}\simeq \Z/p^{\Nhat}$. 
	\item 
	If $\rho_{\Nhat}$ is non semisimple, 
	we further assume
	that $M=M^{\ur}$, $\Ebar[p^M]\subset\overline{E}(\F_k)$ and 
	the restriction $\rho_{N+1}|_{I_k}$ of the mod $p^{N+1}$ representation $\rho_{N+1}$ to the inertia subgroup $I_k \subset G_k$ is also non semisimple.
	Then,  we have $\Nhat = M$, and an isomorphism $\ker(\dE)_{\fin}\simeq\Z/p^N$. That is, the lower bound is achieved  and the inequality $N\leq M=M^{\ur}$ can be strict.
	\end{enumerate}
\end{thm}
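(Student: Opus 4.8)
The plan is to reduce both statements to computing the image of the map $\iota\colon (T_p(\E)^{\circ})_{G_k}\to T_p(E)_{G_k}$, since by \autoref{bloch} (applicable because the Galois symbol is injective, \Cf \eqref{diag:sm}) one has $\Ker(\dE)_{\fin}\simeq\Im(\iota)$, and by \autoref{lem:Tcoinv} the source is $(T_p(\E)^{\circ})_{G_k}\simeq \Ehat[p^{\Nhat}]\simeq\Z/p^{\Nhat}$. Throughout I fix a $\Zp$-basis $e_1,e_2$ of $T_p(E)$ with $\Zp e_1 = T_p(\E)^{\circ}$ and $e_2$ lifting an \'etale generator, so that $\sigma e_1=\chi_1(\sigma)e_1$ and $\sigma e_2=b(\sigma)e_1+\chi_2(\sigma)e_2$, where $\chi_1,\chi_2$ are the diagonal characters (with $\chi_1\chi_2=\chi_{\mathrm{cyc}}$ by the Weil pairing) and $b$ is the associated cocycle. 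For part (i): if $\rho_{\Nhat}$ is semisimple, the submodule $\Ehat[p^{\Nhat}]\subset E[p^{\Nhat}]$ is a direct summand of $G_k$-modules, so after taking coinvariants the inclusion $\Ehat[p^{\Nhat}]=(\Ehat[p^{\Nhat}])_{G_k}\inj (E[p^{\Nhat}])_{G_k}$ remains injective. Reducing $\iota$ modulo $p^{\Nhat}$ and using that the reduction identifies $(T_p(\E)^{\circ})_{G_k}\isomto\Ehat[p^{\Nhat}]$ (\autoref{lem:Tcoinv}), this forces $\iota$ itself to be injective, whence $\Ker(\dE)_{\fin}\simeq\Z/p^{\Nhat}$.

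For part (ii) I would first pin down $\Nhat=M$. Since $\Ehat$ becomes isomorphic to $\Gmhat$ over $\kur$ (\cite[Lemma~4.26, Lemma~4.27]{Maz72}), we have $\chi_1|_{I_k}=\chi_{\mathrm{cyc}}|_{I_k}$, so $\chi_1$ differs from $\chi_{\mathrm{cyc}}$ by an unramified character. The hypothesis $\mu_{p^M}\subset k$ gives $\chi_{\mathrm{cyc}}\equiv 1\pmod{p^M}$ on $G_k$, and $\Ebar[p^M]\subset\Ebar(\Fk)$ gives $\chi_2\equiv 1\pmod{p^M}$; hence $\chi_1=\chi_{\mathrm{cyc}}\chi_2^{-1}\equiv 1\pmod{p^M}$, so $\Nhat\ge M$, and with $\Nhat\le\Mur=M$ this yields $\Nhat=M$. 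Moreover the non-semisimplicity of $\rho_{\Nhat}=\rho_M$ forces $\rho_M\neq 1$, so $N<M$, which exhibits the strict inequality $N<M=\Mur$.

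Next I compute $\Im(\iota)=\langle\bar e_1\rangle$ inside $T_p(E)_{G_k}\simeq\Zp^2/R$, where $R$ is the $\Zp$-submodule generated by $(\chi_1(\sigma)-1,0)$ and $(b(\sigma),\chi_2(\sigma)-1)$ for $\sigma\in G_k$. By \autoref{prop:upper_E} the order of $\bar e_1$ is $p^r$ with $N\le r\le\Nhat=M$, so it suffices to prove $(p^N,0)\in R$. For $\sigma\in I_k$ the character $\chi_2$ is unramified, so $\chi_2(\sigma)=1$ and $(b(\sigma),0)\in R$. The definition of $N$ gives $b(I_k)\subset p^N\Zp$, while $\chi_{\mathrm{cyc}}\equiv 1\pmod{p^{N+1}}$ (as $N+1\le M$) shows that $b|_{I_k}\bmod p^{N+1}$ is a genuine homomorphism $I_k\to p^N\Zp/p^{N+1}\Zp$; the assumption that $\rho_{N+1}|_{I_k}$ is non-semisimple says this homomorphism is nonzero. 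Thus some $\sigma_0\in I_k$ satisfies $b(\sigma_0)=p^N\cdot(\text{unit})$, so $(p^N,0)\in R$, giving $r=N$ and $\Ker(\dE)_{\fin}\simeq\Z/p^N$.

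The main obstacle I anticipate is the coinvariant computation in part (ii): the cocycle $b$ is \emph{not} a homomorphism (its diagonal characters are only trivial modulo $p^M$), so the delicate step is to justify that, after restricting to $I_k$ and reducing modulo $p^{N+1}$, it becomes an honest homomorphism whose nonvanishing is \emph{exactly} the hypothesis that $\rho_{N+1}|_{I_k}$ is non-semisimple, while simultaneously controlling its values so that they lie in $p^N\Zp$ but not entirely in $p^{N+1}\Zp$. Getting these valuations precisely right, and checking that the inertia relations alone already force $(p^N,0)\in R$ independently of the (a priori larger) order of the \'etale character $\chi_2$, is the heart of the argument.
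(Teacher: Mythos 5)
Your proposal is correct, and while it rests on the same foundation as the paper's proof (reduce to computing $\Im\bigl((T_p(\E)^{\circ})_{G_k}\xrightarrow{\iota} T_p(E)_{G_k}\bigr)$ via \autoref{bloch} and \autoref{lem:Tcoinv}), your part (ii) takes a genuinely different route. For part (i) you argue as the paper does, in fact slightly more carefully: you use only splitness of the connected--\'etale sequence at level $\Nhat$ to get injectivity of $\iota$, whereas the paper asserts that the sequences \eqref{ses2} split for all $n\ge 1$. For part (ii), both arguments establish $\Nhat = M$ identically (Weil pairing, triviality of the action on $\Ebar[p^M]$, and $M=\Mur$), and both ultimately exploit that the representation is unipotent with off-diagonal entry of $p$-adic valuation exactly $N$; but the paper works at the finite level $M$, computing $\Ker(\mu_{p^M}\xrightarrow{\iota_M}E[p^M]_{G_k})=\braket{\zeta^b}$ via the Serre--Tate parameter and a cyclic generator (\autoref{clm:1}), and then needs two further claims --- total ramification of $k(E[p^M])/k$ (\autoref{clm:ram}) and a snake-lemma/inverse-limit comparison resting on the appendix isomorphism $T_G\simeq\plim_n[(A_n)_G]$ (\autoref{clm:3}) --- to transport that finite-level computation to the Tate-module level. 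You bypass all three claims by presenting $T_p(E)_{G_k}=\Zp^{2}/R$ directly, with $R$ generated by $(\chi_1(\sigma)-1,0)$ and $(b(\sigma),\chi_2(\sigma)-1)$, and observing that inertia elements alone force $(p^N,0)\in R$: on $I_k$ one has $\chi_2=1$ exactly, $b(I_k)\subset p^N\Zp$ by definition of $N$, and non-semisimplicity of $\rho_{N+1}|_{I_k}$ produces $\sigma_0\in I_k$ with $b(\sigma_0)$ of valuation exactly $N$ (your verification that this is \emph{equivalent} to non-semisimplicity is sound, since both diagonal characters are trivial mod $p^{N+1}$ on $I_k$, using $N+1\le M$). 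The matching lower bound you import from \autoref{prop:upper_E} rather than re-deriving it, which is legitimate since $\Im(\iota)$ is cyclic. What each approach buys: yours is shorter and avoids the delicate passage from finite level to the limit (and hence the appendix); the paper's yields structural byproducts --- the explicit kernel $\braket{\zeta^{b}}$, the Serre--Tate interpretation of the extension class, and the total ramification of $k(E[p^M])/k$ --- that your argument does not produce.
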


\begin{proof}
	If $N=\Nhat$ there is nothing to show, 
	so we assume $N<\Nhat$. 	
	
	\sn 
	(i)
	As in the proof of \autoref{prop:upper_E}, 
	$\Ker(\dE) \simeq \Im( (T_p(\E)^{\circ})_{G_k} \xrightarrow{\iota} T_p(E)_{G_k})$ 
	and $T_p(\E)^{\circ} \simeq \Z/p^{\Nhat}$.
	From the assumption, 
	the sequences \eqref{ses2}  are split for all $n\geq 1$ and hence 
	$(T_p(\E)^{\circ})_{G_k} \xrightarrow{\iota} T_p(E)_{G_k}$ is injective. 
	This implies that $\Z/p^{\Nhat} \simeq T_p(\E)^{\circ} \simeq \Ker(\dE)$.
	
	\sn 
	(ii) 
	Consider the short exact sequence 
	\begin{equation}\label{seq:EpM}	
		0 \to \Ehat[p^M]\to E[p^M]\to \Ebar[p^M]\to 0
	\end{equation}
	as $G_k$-modules from \eqref{seq:conn-et_A2}. 
	From the assumption $\Ebar[p^M]\subset \Ebar(\Fk)$, 
	the Galois invariance of the Weil pairing  
	(\cite[Chapter III, Proposition 8.1]{Sil106})
	implies that 
	the determinant of the mod $p^{M}$ representation 
	\[
		G_k\xrightarrow{\rho_M} \Aut(E[p^M])\simeq  GL_2(\Z/p^M)\xrightarrow{\det} (\Z/p^M)^{\times}
	\]
	coincides with the cyclotomic character $\chi_{M}\colon G_k\to \mu_{p^M}$ by fixing 
	a primitive $p^{M}$-th root of unity $\zeta$ which is in $k$. 
	% $e\colon E[p^M]\times E[p^M] \to \mu_{p^M}$ satisfies 
	%$\sigma e(z,y) = e(\sigma z,\sigma y) = e(az,bz+cy) = e(z,z)^{ab}e(z,y)^{ac} = e(z,y)^{\det(\rho(\sigma))}$, 
	%where $\rho(\sigma) = \begin{pmatrix}a & b \\	0 & d\end{pmatrix}$.
	We have $\Ehat[p^{M}] \simeq \mu_{p^{M}}$ as $G_k$-modules and hence $M \le \Nhat$. 
	As we assumed $M = \Mur$, we have $M = \Nhat$.
	The above short exact sequence \eqref{seq:EpM} becomes
	\begin{equation}\label{nonsplitseq}
		0\rightarrow \mu_{p^M}\xrightarrow{\iota_M} E[p^M]\xrightarrow{\pi_M} \Z/p^M\rightarrow 0. 
	\end{equation} 
	Let $\zeta = \zeta_{p^{M}}$ be a fixed primitive $p^{M}$-th root of unity in $k$. 
	Fix a basis $(z,y)$ of $E[p^{M}]$ where 
	$z = \iota_M(\zeta) \in E[p^{M}]$ and $\Ebar[p^{M}]$ is generated by the reduction of $y$. 
	This gives $\Aut(E[p^{M}]) \simeq GL_2(\Z/p^{M})$. 
	If the sequence \eqref{nonsplitseq} splits, then by taking the mod $p^{N+1}$ 
	\[
		\xymatrix{
		G_k \ar[r]^-{\rho_{M}} \ar[rd]_{\rho_{N+1}} & GL_2(\Z/p^{M})\ar[d]^{\mod p^{N+1}}\\ 
		& GL_2(\Z/p^{N+1})
		}
	\]
	the mod $p^{N+1}$ representation $\rho_{N+1}$ becomes semisimple, 
	which contradicts the assumption that the restriction of $\rho_{N+1}$ 
	to the inertia subgroup is irreducible. 
	We conclude that the above short exact sequence \eqref{nonsplitseq} is non-split. 
	
	Applying $G_k$-coinvariance to \eqref{nonsplitseq} we obtain an exact sequence of abelian groups,
	\begin{equation}\label{ses:2.13a}
	(\mu_{p^{M}})_{G_k}\xrightarrow{\iota_M} E[p^{M}]_{G_k}\xrightarrow{\pi_M}(\Z/p^{M})_{G_k}\rightarrow 0.
	\end{equation} 
	\setcounter{claim}{0}
	\begin{claim}\label{clm:1}
		There is an isomorphism 
		$\Im((\mu_{p^{M}})_{G_k}\xrightarrow{\iota_M} E[p^{M}]_{G_k})\simeq\mu_{p^N}\simeq\Z/p^N.$ 
	\end{claim}
	\begin{proof}
		To prove the claim note that the following are true for the sequence \eqref{nonsplitseq}:
		\begin{itemize}
		\item 
		Its corresponding Serre-Tate parameter $u\in \Ok^{\times}/p^M$ is nontrivial.
		% This is because it restricts to the splitting sequence \ref{ses00}. In conclusion, this sequence splits after extending to the cyclic extension $k(u^{1/p^m})$.
		\item The $G_k$-action on $E[p^M]$ factors through the cyclic quotient $\Gal(k(u^{1/p^M})/k)$. Let $\sigma\in G_k$ be a lift of a generator of the Galois group $\Gal(k(u^{1/p^M})/k)$.
		\end{itemize}
		For the mod $p^M$ representation $\rho_M\colon G_k \to \Aut(E[p^M])= GL_2(\Z/p^M)$, 
		we have $\rho_M(\sigma) = \begin{pmatrix}
			1 & b \\
			0 & 1
		\end{pmatrix}$ 
		for some $b \in \Z/p^M$. 
		Namely, $\sigma(z,0) = (z,0)$ and $\sigma(0,y) = (bz,y)$.
		Consider the map $p^{M-N} \colon E[p^M]\to E[p^N]$ 
		and $(p^{M-N}z,p^{M-N}y)$ is a basis of $E[p^N]$. 
		The following diagram is commutative 
		\[
		\xymatrix{
		G_k\ar[dr]_{\rho_N}\ar[r]^-{\rho_M} & GL_2(\Z/p^M)  \ar[d]^{\bmod p^N}\\
		& GL_2(\Z/p^N). 
		}
		\]
		Since the action of $G_k$ on $E[p^N]$ is trivial,
		we have  $\begin{pmatrix}
			1 & b \\
			0 & 1
		\end{pmatrix} \equiv \begin{pmatrix}
			1 & 0\\
			0 & 1
		\end{pmatrix} \bmod p^N$ 
		and hence $b \equiv 0 \bmod p^N$. 
		If we suppose $b \equiv 0 \bmod p^{N+1}$, 
		then the action of $G_k$ on $E[p^{N+1}]$ becomes trivial 
		so that $b$ is not divisible by $p^{N+1}$.
	
		Next, we show that $\Ker(\mu_{p^M}\xrightarrow{\iota_M} E[p^M]_{G_k})=\braket{\zeta^{b}}$. 
		Since $(\mu_{p^M})_{G_k}=\mu_{p^M}$, $\zeta^{b}$ is a non-trivial element of $(\mu_{p^M})_{G_k}$. In fact, it is a primitive $p^{M-N}$-th root of unity. We have 
		\[\iota_M(\zeta^{b})=(bz,0)  =\sigma(0,y)-(0,y)=0\in E[p^M]_{G_k}.\] 
		This proves $\braket{\zeta^{b}} \subseteq \ker(\mu_{p^M}\xrightarrow{\iota_M} E[p^M]_{G_k})$. Conversely, let $x \in\ker(\mu_{p^M}\xrightarrow{\iota_M} E[p^M]_{G_k})$. 
		Since the $G_k$-action is cyclic, this means that there exists some $w\in E[p^M]$ such that 
		$\iota_M(x) = \sigma(w)-w$ 
		in $E[p^M]$. 
		Since the $G_k$-action on $\mu_{p^M}$ is trivial, we may assume that  $w=l(0,y)$ for some $l\in\Z/p^M$.  
		Then $\iota_M(x) = l(\sigma(0,y) - (0,y)) = l bz = l \cdot \iota_M(\zeta^b)$. 
		This implies $\ker(\mu_{p^M}\xrightarrow{\iota_M} E[p^M]_{G_k})= \braket{\zeta^{b}}$. We conclude that there is an exact sequence
		\[
		0\rightarrow\mu_{p^M}/\braket{\zeta^{b}} \xrightarrow{\iota_M}E[p^M]_{G_k}\xrightarrow{\pi_M}\Z/p^M\rightarrow 0.
		\] 
		Finally notice that we have an isomorphism 
		$\mu_{p^M}/\braket{\zeta^{b} }\simeq\mu_{p^N}$, since $\braket{\zeta^{b}} \simeq\mu_{p^{M-N}}$, which yields the desired isomorphism 
		$\Im(\iota_M)\simeq\mu_{p^N}\simeq\Z/p^N$.
	\end{proof} 

	\begin{claim}\label{clm:ram}
		The extension $k(E[p^{M}])/k$ is totally ramified.
	\end{claim}
	\begin{proof}
		Let $G$ be the image of the Galois representation $\rho_{M}:G_k\to \Aut(E[p^{M}]) =GL_2(\Z/p^{M})$. 
		We have $G \simeq \Gal(k(E[p^{M}])/k)$. 
		As noted in the proof of \autoref{clm:1}, $G$ is generated by 
		$\begin{pmatrix}
			1 & b \\
			0 & 1
		\end{pmatrix}$
		with $b \equiv 0 \bmod p^{N}$. 
		We have $\#G \le p^{M - N}$. 
		We denote by $I$ the image of the inertia subgroup $I_k = G_{k^{\ur}}$ by $\rho_{M}$ 
		which is isomorphic to the inertia subgroup of $\Gal(k(E[p^{M}])/k)$.
		Since $I \subset G$, it is isomorphic to an additive subgroup of $\Z/p^{M}$, and hence 
		$I$ can be written as 
		\[
		I = \Set{\begin{pmatrix}
		1 & x \\
		0 & 1 	
		\end{pmatrix} |
		x \in p^t(\Z/p^{M})
		}.
		\]
		for some $N \le t\le M$. 
		We consider what happens mod $p^{N+1}$. 
		If we assume $N< t$, then 
		$\begin{pmatrix}1 & x \\
		0 & 1 	
		\end{pmatrix} \in I$ 
		for $x \in p^t(\Z/p^{M})$ is the identity mod $p^{N+1}$, 
		and $x\equiv 0 \bmod p^{N+1}$. 
		The action of $I_k$ on $E[p^{N+1}]$ is trivial. 
		This contradicts to the assumption that $\rho_{N+1}|_{I_k}$ is irreducible. 
		Therefore, $t = N$ and hence $\# I = p^{M-N} = \#G$. 
		The extension $k(E[p^{M}])/k$ is totally ramified.
	\end{proof}

	\begin{claim}\label{clm:3}
	We have an isomorphism $\Im((T_p(\E)^\circ)_{G_k}\xrightarrow{\iota} T_p(E)_{G_k})\simeq\img(\mu_{p^M}\rightarrow E[p^M]_{G_k})$.
	\end{claim}
	\begin{proof} 
		In the appendix (\Cf \autoref{prop:coinv}) we prove isomorphisms 
		$T_p(E)_{G_k} \simeq \plim_n (E[p^n]_{G_k})$ and 
		$T_p(E)_{I_k} \simeq \plim_n (E[p^n]_{I_k})$. 
		We have commutative diagrams 
		\[
		\vcenter {
		\xymatrix{
		(T_p(\E)^\circ)_{I_{k}}\ar[r]^{\iota}\ar[d]^{\simeq} & T_p(E)_{I_{k}}\ar@{->>}[d]\\
		(\mu_{p^{M}})_{I_{k}}\ar[r]^{\iota_M} & E[p^M]_{I_{k}}.
		}}
		\mbox{and}
		\vcenter{
		\xymatrix{
		(T_p(\E)^\circ)_{G_{k}}\ar[r]^{\iota}\ar[d]^{\simeq} & T_p(E)_{G_{k}}\ar@{->>}[d]\\
		(\mu_{p^M})_{G_{k}}\ar[r]^{\iota_M} & E[p^M]_{G_{k}}.
		}
		}
		\]
		Here, the left vertical map in each diagram is bijective by \autoref{lem:Tcoinv} 
		and the assumption $M = \Mur$. 
		Consider the following commutative diagram: 
		\[
		\xymatrix{
		\Im((T_p(\E)^\circ)_{I_k}\xrightarrow{\iota} T_p(E)_{I_k})\ar@{->>}[r]\ar@{->>}[d] & \Im((T_p(\E)^\circ)_{G_{k}}\xrightarrow{\iota} T_p(E)_{G_k})\ar@{->>}[d] \\
		\Im((\mu_{p^M})_{I_k} \xrightarrow{\iota_M} E[p^M]_{I_k}) \ar[r]^-{\simeq} & \Im( (\mu_{p^M})_{G_k} \xrightarrow{\iota_M} E[p^M]_{G_k}). 
		}
		\]
		Here, the bottom horizontal map is bijective 
		because of \autoref{clm:ram}.
		Thus, it is enough to prove the injectivity of the left vertical map in the above diagram. 
		%claim with the field $k$ replaced with $k^{\ur}$. 
		It suffices to show that for every $r>M$ we have an isomorphism $\img((\mu_{p^r})_{I_k}\xrightarrow{\iota_r}E[p^r]_{I_k})\simeq \img((\mu_{p^{M}})_{I_k}\xrightarrow{\iota_M}E[p^M]_{I_k})$. This will follow by \autoref{lem:Tcoinv} and snake lemma. We have a commutative diagram with exact rows and columns
		\[
		\xymatrix{
		& & 0\ar[d] & \\
		(\mu_{p^{r-M}})_{I_k}\ar[r]^{\iota_{r-M}}\ar[d]^{\alpha} & E[p^{r-M}]_{I_k}\ar[r]^{\pi_{r-M}}\ar[d] & \Z/p^{r-M}\ar[r]\ar[d] & 0\\
		(\mu_{p^r})_{I_k}\ar[r]^{\iota_r}\ar[d]^{\beta} & E[p^r]_{I_k}\ar[r]^{\pi_r}\ar[d] & \Z/p^r\ar[r]\ar[d] & 0\\
		(\mu_{p^M})_{I_k}\ar[r]^{\iota_M}\ar[d] & E[p^M]_{I_k}\ar[r]^{\pi_M}\ar[d] & \Z/p^M\ar[r]\ar[d] & 0\\
		0 & 0 & 0 &.
		}\]
		Snake Lemma applied to the rightmost part of the diagram gives an exact sequence  
		\[
		\Ker(\pi_{r-M})\xrightarrow{\alpha}\ker(\pi_r)\xrightarrow{\beta}\ker(\pi_M)\xrightarrow{\delta} \ck(\pi_{r-M}) = 0.
		\] 
		Since $\pi_{r-M}$ is surjective, we get an exact sequence 
		$\ker(\pi_{r-M})\xrightarrow{\alpha}\ker(\pi_r)\xrightarrow{\beta}\ker(\pi_M)\rightarrow 0$. 
		The claim will follow if we show that the map $\ker(\pi_r)\xrightarrow{\beta}\ker(\pi_M)$ is 
		an isomorphism, or equivalently that $\ker(\pi_{r-M})\xrightarrow{\alpha}\ker(\pi_r)$ is 
		the zero map. 
		But this follows by \autoref{lem:Tcoinv}. 
		Namely, the map $(\mu_{p^r})_{I_k}\xrightarrow{\beta}(\mu_{p^M})_{I_k}$ is an isomorphism. 
	\end{proof} 
	From \autoref{bloch}, 
	$\Ker(\dE) \simeq \Im((T_p(\E)^{\circ})_{G_k}\xrightarrow{\iota} T_p(E)_{G_k})$. 
	\autoref{clm:1} and \autoref{clm:3} will complete the proof of the theorem in this case.
	It is clear that if $\overline{E}[p^{\Nhat}]\not\subset\overline{E}(\Fk)$,  the inequality $N\leq\Nhat$ becomes strict. 
\end{proof}

\begin{rem}\label{upperboundachieve} 
	One can use  part (ii) of \autoref{thm:2.9} to construct examples of elliptic curves for 
	which we have $N<\Nhat=M^{\ur}$. In particular, the upper bound of \autoref{thm:main_intro} 
	can be strictly achieved. For example, consider $E$ an elliptic curve over $\Q_p$ 
	with complex multiplication. 
	Let $k_0=\Q_p(\mu_p)$ and for $n\geq 1$ consider the tower of finite extensions 
	$k_n=k_0(\widehat{E}[p^n])$. 
	It follows by \cite[Theorem 2.1.6]{Kaw02} and \cite[IV.6, Theorem 6.1]{Sil106} that 
	for every $n\geq 1$ the extension $k_{n+1}/k_n$ is totally ramified of degree $p$. 
	Thus, there exists some $n\geq 1$ such that 
	$\overline{E}[p^n]\not\subset\overline{E}(\mathbb{F}_{k_n})$. 
	This means that over $k_n$ we have a strict inequality $N<n=\Nhat$. 
	Moreover, notice that  $\Nhat=M^{\ur}$, since $k_{n+1}/k_n$ is totally ramified.  
\end{rem}

\subsubsection*{Construction of the maximal covering} 
We next consider the case when the elliptic curve $E$ is the base change of an elliptic curve over  $\Q$  with  potential complex multiplication. 
Let $E_0$ be an elliptic curve over $\Q$. 
For a field extension $F/\Q$, 
we denote by $\End_F(E_0)$  the ring of endomorphisms on $E_0$ 
which are defined over $F$. 
Assume first, $E_0$ has potential complex multiplication 
by the ring of integers $\mathcal{O}_K$ of an imaginary quadratic field $K$. 
Namely, 
$\End_{\overline{\Q}}(E_0) \simeq \OK$. 
As all endomorphisms on $E_0$ are defined over $K$, 
we also have $\End_{\overline{\Q}}(E_0) = \End_K(E_0) \simeq \OK$.
It follows by \cite[Corollary 5.12]{Rub99} that $K$ has class number one. 
Suppose that the prime number $p$ splits completely in $K$ and $E_0$ has good reduction at $p$.  
%Putting $k := F_v$, the base change $E = E_0\otimes k$ is an elliptic curve over $k$ which has good ordinary reduction.
%That is, we assume that the endomorphism ring $\End(E_0)=\mathcal{O}_K$  and there exists a finite place $v$ of $F$ lying above $p$ such that $F_v=k$. 
We consider the reduction modulo $p$,  
\[
r:\End_K(E_0)\rightarrow\End_{\ol{\F_p}}(\overline{E}_0).
\]
It follows by \cite{Deur41} (see also \cite[13.4, Theorem 12]{Lang87}, \cite[p.\ 2]{Raj69})  that
 there exists a prime element $\eta$ of $\OK$ such that $p=\eta\overline{\eta}$ and  the endomorphism $\eta:E_{0}\rightarrow E_{0}$ of $E_{0}$  reduces to the Frobenius automorphism $\varphi:\overline{E}_{0}\to\overline{E}_{0}$.
 Since $p$ splits completely in $K$, the completion of $K$ at $(\eta)$ is $\Q_p$. 
 Denote by $E = E_0\otimes_{\Q}\Qp$ the base change of $E_0$ to $\Q_p$. We conclude that $E$ has complex multiplication defined over $\Q_p$. That is, $\End_{\Qp}(E)\simeq \mathcal{O}_K$ and $\eta:E\to E$ reduces to the Frobenius. 
   We claim that for every $n\geq 1$, $\ker(\eta^n)=\widehat{E}[p^n]$. Since the reduction of $\eta^n$ is an automorphism, we clearly have $\ker(\eta^n)\subset\widehat{E}$. Moreover, the equality $\eta\overline{\eta}=p$ implies that $\ker(\eta^n)\subset E[p^n]$ from where the claim follows. 
 
 We conclude that if $\ker(\eta^n)=\widehat{E}[p^n]\subset\widehat{E}(k)$, then the isogeny $\eta^n:E\to E$ defines a geometric covering of degree $p^n$ and  is completely ramified over $\Ebar$.   
According to \autoref{thm:2.9}~(i), 
$\eta^{\Nhat}: E\to E$ is the maximal covering corresponding to $\piab(E)^{\geo}_{\ram}$.

\subsection*{Good supersingular reduction}
Next, we consider the elliptic curve $E$  
which has good supersingular reduction. 
The boundary map $\dE:V(E) \to \Ebar(\Fk)$ induces a short exact sequence 
$\Ker(\dE)/p^n \to V(E)/p^n \to  \Ebar(\Fk)/p^n \to 0$. 
As the reduction $\Ebar$  of $E$ satisfies 
$\Ebar[p^n] = 0$ for any $n \ge 1$, 
we have 
$\Ebar(\Fk)/p^n = 0$ and  $\Tor(\Ebar(\Fk),\Z/p^n) \simeq \Ebar(\Fk)[p^n] =  0$ 
so that we obtain 
\begin{equation}
\label{eq:dE}
\Ker(\dE)/p^n \simeq V(E)/p^n. 
\end{equation}
In the following, \emph{we assume} $E[p]\subset E(k)$ 
and will give bounds of $\Ker(\dE)_{\fin}$ (\autoref{thm:main2}). 
By fixing an isomorphism 
$E[p] \simeq (\mu_p)^{\oplus 2}$ of (trivial) $G_k$-modules, 
the Kummer map gives 
\begin{equation}
	\label{eq:KummerE}
	\Ehat(k)/p \inj H^1(k,\Ehat[p]) \simeq H^1(k,\mu_p)^{\oplus 2}\simeq (\kt/p)^{\oplus 2}.
\end{equation}
Its image can be understood by a filtration on $\kt/p$ using the higher unit group $U_k^i = 1 + \mathfrak{m}_k^i$. 
Precisely, because $\Ebar[p] = 0$, 
we have the following decomposition: 
\begin{equation}
	\label{eq:dec}
	E(k)/p \simeq \Ehat(k)/p \simeq \Ubar_k^{p(e_0(k)-t_0(k))} \oplus \Ubar_k^{pt_0(k)}, 
\end{equation}
where 
$\Ubar_k^i := \Im(U_k^i \to k^{\times}/p)$, 
$e_0(k) = e_k/(p-1)$, and $t_0(k) = \max\set{v_k(y) | 0\neq y\in \Ehat[p]}$
(\Cf \cite[Section 3.4]{Gazaki/Hiranouchi2021}).
By this identification \eqref{eq:dec},
we can decompose an element $w$ in $E(k)/p$ as 
$w = (u',u)$ with $u' \in \Ubar_k^{p(e_0(k) -t_0(k))},\ u\in \Ubar_k^{pt_0(k)}$. 
The Galois symbol map associated to $E$ and $\Gm$ (\autoref{def:symbol}) induces 
\[
s_p:(E/p\otimes \Gm/p)(k) \to H^2(k,E[p] \otimes \mu_p) \simeq H^2(k, \mu_p^{\otimes 2})^{\oplus 2} \simeq (\Z/p)^{\oplus 2}.
\]
In fact, this map $s_p$ becomes bijective (\cite[Theorem~4.2]{Hir21}), and since it factors through the surjection $(E/p\otimes \Gm/p)(k)\twoheadrightarrow K(k;E,\Gm)/p$, it follows that this surjection is an isomorphism as well.  
The map above is compatible with the Hilbert symbol map 
$(-,-)_p:\kt/p\otimes \kt/p \to \mu_p \simeq \Z/p$ 
(\cite[Section~XIV.2, Proposition~5]{Ser68})
as the following commutative diagram indicates: 
\begin{equation}\label{diag:H}
	\vcenter{
	\xymatrix{
		E(k)/p \otimes k^{\times}/p  \ar[r]^-{\set{-,-}_{k/k}}\ar[d]^{\simeq} & 
		(E/p \otimes \Gm/p)(k)  \ar[d]^{s_p}_{\simeq} \\ 
	\left(\Ubar_k^{pt_0(k)}\otimes k^{\times}/p \right)\oplus \left( \Ubar_k^{p(e_0(k) - t_0(k))} \otimes k^{\times}/p \right) \ar[r]^-{(-,-)_p^{\oplus 2}}& (\Z/p)^{\oplus 2}.
	}}
\end{equation}
Here,  the top horizontal map is the symbol map  
$w\otimes x \mapsto \set{w,x}_{k/k}$ 
(\Cf \cite[Proof of Proposition~4.6]{Hir21}). 
The above commutative diagram gives the following lemma. 
	
\begin{lem}\label{lem:Hilbert}
	Two elements $\set{ (u'_1,1), x_1}_{k/k}, \set{(1,u_2), x_2}_{k/k}$ 
	generate  $K(k;E,\Gm)/p$ if 
	they satisfy $(u'_1, x_1)_p\neq 0$ and $(u_2, x_2)_p \neq 0$.
\end{lem}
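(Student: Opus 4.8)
The plan is to transport everything through the isomorphism $s_p$ and read off the images of the two symbols. Recall from the discussion preceding the statement that the Galois symbol map gives an isomorphism $s_p\colon K(k;E,\Gm)/p \isomto (\Z/p)^{\oplus 2}$, obtained by precomposing the bijection $(E/p\otimes \Gm/p)(k)\isomto K(k;E,\Gm)/p$ with $s_p$ on the Mackey product. In particular, it suffices to show that the images in $(\Z/p)^{\oplus 2}$ of the two given symbols generate.

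First I would invoke the compatibility diagram \eqref{diag:H}. Writing $w = (u',u) \in E(k)/p$ according to the decomposition \eqref{eq:dec} (so that $u'$ lies in the $\Ubar_k^{p(e_0(k)-t_0(k))}$-summand and $u$ in the $\Ubar_k^{pt_0(k)}$-summand), the diagram identifies $s_p(\set{w,x}_{k/k})$ with the pair of Hilbert symbols $\big((u,x)_p,\,(u',x)_p\big)$, since the left vertical isomorphism sends $w\otimes x$ to $(u\otimes x,\,u'\otimes x)$ and the bottom map is $(-,-)_p$ in each coordinate. Specializing to $w = (u_1',1)$ and $w = (1,u_2)$, and using bilinearity of the Hilbert symbol together with $(1,-)_p = 0$, the two symbols $\set{(u_1',1),x_1}_{k/k}$ and $\set{(1,u_2),x_2}_{k/k}$ map to $\big(0,(u_1',x_1)_p\big)$ and $\big((u_2,x_2)_p,0\big)$ respectively.

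Under the running hypotheses $(u_1',x_1)_p \neq 0$ and $(u_2,x_2)_p \neq 0$, these two images are nonzero multiples of the standard basis vectors of $(\Z/p)^{\oplus 2}$, hence generate it over the field $\Z/p$. Since $s_p$ is an isomorphism, the two symbols generate $K(k;E,\Gm)/p$, as claimed. The argument is essentially immediate once \eqref{diag:H} is in hand; the only point needing care is to respect the ordering of the two summands in \eqref{eq:dec}, matching the $u$-coordinate with the first copy of $\Z/p$ and the $u'$-coordinate with the second, exactly as the diagram dictates. I do not anticipate any genuine obstacle, as the substantive work—the bijectivity of $s_p$ and the Hilbert-symbol compatibility—has already been carried out.
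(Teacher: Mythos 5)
Your argument is correct and is exactly the paper's intended proof: the paper derives this lemma directly from the bijectivity of $s_p$ on $K(k;E,\Gm)/p$ and the compatibility diagram \eqref{diag:H}, with no further input. Your computation of the images $\bigl(0,(u_1',x_1)_p\bigr)$ and $\bigl((u_2,x_2)_p,0\bigr)$, including the attention to the swapped ordering of the summands in the bottom-left corner of \eqref{diag:H}, matches what the paper leaves implicit when it says the commutative diagram gives the lemma.
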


The image of $\Ubar_k^i \otimes \Ubar_k^j$ by the Hilbert symbol is known as follows: 

\begin{lem}[{\cite[Lemma 3.4]{Hir16}}]\label{lem:HilbertIm}
	If $p\nmid i$ or $p\nmid j$, then 
	\[
	\#(\Ubar_k^i,\Ubar_k^j)_p = \begin{cases} 
	p,& \mbox{if $i+j \le pe_0(k)$},\\
	0,& \mbox{otherwise}.
	\end{cases}
	\]
\end{lem}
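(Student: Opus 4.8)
The plan is to reduce the statement to a duality property of the higher unit filtration under the Hilbert symbol. Since the target $\mu_p\simeq\Z/p$ has no proper nonzero subgroups, the image $(\Ubar_k^i,\Ubar_k^j)_p$ has order $p$ exactly when the pairing is not identically zero on $\Ubar_k^i\times\Ubar_k^j$, and is $0$ otherwise; so the assertion is precisely that, under the hypothesis $p\nmid i$ or $p\nmid j$, this pairing is nonzero if and only if $i+j\le pe_0(k)$. Recall that in the present (supersingular) setting $\mu_p\subset k$, so $p-1\mid e_k$, the integer $e_0(k)=e_k/(p-1)$ is positive, and one has the identity $pe_0(k)=e_k+e_0(k)$. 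I would phrase the goal as an orthogonality relation: the exact annihilator of $\Ubar_k^i$ under $(\cdot,\cdot)_p$ inside $U_k/p$ is $\Ubar_k^{\,pe_0(k)+1-i}$, from which the lemma is immediate and manifestly symmetric in $i,j$.

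The key input is a conductor computation for degree-$p$ Kummer extensions. For $u\in U_k^i\ssm U_k^{i+1}$ with $p\nmid i$ and $1\le i<pe_0(k)$, the extension $L=k(\sqrt[p]{u})/k$ is nontrivial and totally (wildly) ramified, with conductor exponent $pe_0(k)+1-i$; equivalently, by local class field theory, $U_k^{\,pe_0(k)+1-i}\subseteq N_{L/k}(L^{\times})$ while $U_k^{\,pe_0(k)-i}\not\subseteq N_{L/k}(L^{\times})$. The boundary cases are easy: for $i>pe_0(k)$ one has $u\in(\kt)^p$, because the $p$-th power map carries $U_k^m$ onto $U_k^{m+e_k}$ for $m>e_0(k)$, whence $U_k^{\,pe_0(k)+1}\subseteq(\kt)^p$ and $\Ubar_k^{\,pe_0(k)+1}=0$; and for $i=pe_0(k)$ the extension is unramified.

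Granting this, both halves follow from the norm characterization $(u,v)_p=0\iff u\in N_{L/k}(L^{\times})$ with $L=k(\sqrt[p]{v})$ (and $(u,v)_p=0$ whenever $v\in(\kt)^p$). For the vanishing when $i+j>pe_0(k)$: given $u\in U_k^i$ and $v\in U_k^j$, let $j'\ge j$ be the leading level of $v$ modulo $(\kt)^p$; the conductor of $k(\sqrt[p]{v})/k$ is at most $pe_0(k)+1-j'\le pe_0(k)+1-j\le i$ (using $i+j\ge pe_0(k)+1$), so $u\in U_k^i\subseteq N_{L/k}(L^{\times})$ and the symbol vanishes, with no coprimality needed. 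For the non-vanishing when $i+j\le pe_0(k)$ with, say, $p\nmid i$: take $u=1+\pi^i$, which is not a $p$-th power since $p\nmid i$ and $i\le pe_0(k)-1$; because the conductor of $L=k(\sqrt[p]{u})/k$ is exactly $pe_0(k)+1-i$, the group $U_k^{\,pe_0(k)-i}$ is not contained in $N_{L/k}(L^{\times})$, and as $j\le pe_0(k)-i$ we have $U_k^j\supseteq U_k^{\,pe_0(k)-i}$; hence some $v\in U_k^j$ escapes the norm group and $(u,v)_p\neq 0$.

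The main obstacle is the conductor computation itself, a genuine wild-ramification calculation: it is carried out by determining the higher ramification groups (equivalently the different) of $k(\sqrt[p]{1+a\pi^i})/k$, whose single break sits at $pe_0(k)-i$ in the upper numbering, or alternatively by invoking an explicit Hilbert-symbol formula. This is also precisely where the hypothesis $p\nmid i$ or $p\nmid j$ enters: at levels divisible by $p$ the leading term of a unit can be absorbed into a $p$-th power, so the clean conductor formula degenerates and the lower bound in the non-vanishing step can fail; the coprimality guarantees that the chosen representative genuinely sits at level $i$ (resp. $j$). The supporting facts—the effect of the $p$-th power map on the filtration $\{U_k^m\}$ and the identity $pe_0(k)=e_k+e_0(k)$—are routine, and their symmetry in $i$ and $j$ is exactly what mirrors the symmetry of the assertion.
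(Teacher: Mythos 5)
The paper contains no proof of this lemma to compare against: it is quoted verbatim from \cite[Lemma 3.4]{Hir16}, and the proof lives in that reference. Your argument is correct and is the standard route for this statement (and the same in spirit as the cited source): reduce via the norm characterization $(y,x)_p=0\Leftrightarrow y\in N_{k(\sqrt[p]{x})/k}(k(\sqrt[p]{x})^{\times})$ to the conductor computation for the wildly ramified Kummer extensions $k(\sqrt[p]{1+a\pi^i})/k$ (break $pe_0(k)-i$, conductor $pe_0(k)+1-i$, for $p\nmid i$, $0<i<pe_0(k)$), together with the boundary facts $\Ubar_k^{pe_0(k)+1}=0$ and unramifiedness at level $pe_0(k)$; your observation that the coprimality hypothesis is needed only for the non-vanishing half is also right. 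The one step worth making fully explicit is the fact you only allude to ("a leading term at a level divisible by $p$ can be absorbed into a $p$-th power"): since the residue field is perfect, a unit that is not a $p$-th power has leading level modulo $(\kt)^p$ either prime to $p$ or equal to $pe_0(k)$; this is exactly what licenses applying your conductor bound to the arbitrary element $v$ in the vanishing step, and what guarantees that your chosen $u=1+\pi^i$ genuinely sits at level $i$ in the non-vanishing step.
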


For $m\ge 1$, put $k_m := k(\mu_{p^m})$.  
Moreover, consider the invariant 
\[
	R=\min\set{r\geq 0 |e_k\leq (p-1)p^r}.
\]   
Using the above observations, we determine generators of $K(k_m;E,\Gm)/p$ for some $m$.

\begin{lem}\label{lem:gen2}
	We assume $E[p]\subset E(k)$ and $M = M^{\ur}$. 
	Then, 
	there exists $M \le m \le M +R$ 
	such that 
	the $K$-group $K(k_m;E,\Gm)/p$ is generated by 
	 elements of the form $\set{a, \zeta_{p^{m}}}_{k_m/k_m}$, 
	where $\zeta_{p^m}$ is a primitive $p^m$-th root of unity. 
\end{lem}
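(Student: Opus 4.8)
The plan is to reduce the statement, over each $k_m$, to the non-vanishing of two Hilbert symbols, and then to choose $m$ by locating $\zeta_{p^m}$ in the unit filtration of $k_m^\times/p$. First I would pass all the data to $k_m$. Since $E$ has good supersingular reduction and $E[p]\subset E(k)\subset E(k_m)$, the decomposition \eqref{eq:dec} holds over $k_m$, giving $E(k_m)/p\simeq \Ubar_{k_m}^{p(e_0(k_m)-t_0(k_m))}\oplus \Ubar_{k_m}^{pt_0(k_m)}$. Because $M=M^{\ur}$, the tower $k_m/k$ is totally ramified of degree $p^{m-M}$ for every $m\ge M$, so $e_0(k_m)=e_0(k)p^{m-M}$ and $t_0(k_m)=t_0(k)p^{m-M}=e_0(k_m)/(p+1)$, the last equality recording that in the supersingular case every nonzero point of $\Ehat[p]$ has valuation $e_0(k)/(p+1)$. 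By \autoref{lem:Hilbert} applied over $k_m$ it then suffices to produce one unit $u_1'\in\Ubar_{k_m}^{p(e_0(k_m)-t_0(k_m))}$ and one unit $u_2\in\Ubar_{k_m}^{pt_0(k_m)}$ with $(u_1',\zeta_{p^m})_p\neq 0$ and $(u_2,\zeta_{p^m})_p\neq 0$; granting this, every class of $K(k_m;E,\Gm)/p$ is a sum of symbols $\set{a,\zeta_{p^m}}_{k_m/k_m}$.

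Next I would pin down the position of $\zeta_{p^m}$. Its naive level is $v_{k_m}(\zeta_{p^m}-1)=e_{k_m}/(p^{m-1}(p-1))=e_0(k)/p^{M-1}$, independent of $m$; let $\ell_m$ be its effective level, the index at which its image is non-zero in the graded pieces of the filtration $\Ubar_{k_m}^{\bullet}$. Since $k_{m+1}=k_m(\zeta_{p^m}^{1/p})$ is a non-trivial totally ramified extension of degree $p$, the Kummer class of $\zeta_{p^m}$ is ramified, so $\ell_m$ is coprime to $p$ and lies strictly below the top level $pe_0(k_m)$. Then \autoref{lem:HilbertIm} shows that $(\Ubar_{k_m}^{i},\zeta_{p^m})_p$ is surjective as soon as $i+\ell_m\le pe_0(k_m)$, the coprimality hypothesis being satisfied because $p\nmid\ell_m$. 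Applying this with $i=p(e_0(k_m)-t_0(k_m))$ and with $i=pt_0(k_m)$ reduces the whole problem to the single inequality $\ell_m\le pt_0(k_m)$, which is the more demanding of the two because $t_0(k_m)<e_0(k_m)-t_0(k_m)$.

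It remains to find $m$ with $M\le m\le M+R$ for which $\ell_m\le pt_0(k_m)=pe_0(k_m)/(p+1)$. The point is that $pt_0(k_m)=pt_0(k)p^{m-M}$ grows geometrically, whereas $\ell_m=pe_0(k_m)-u_m$, where $u_m$ is the ramification break of $k_{m+1}/k_m$, and the breaks of the cyclotomic tower grow like $p^{m}$, forcing $\ell_m$ back down to a bounded size comparable to the constant naive level $e_0(k)/p^{M-1}$. The integer $R=\min\set{r\ge 0 | e_k\le (p-1)p^r}$ is exactly the number of layers one must climb for $p^{m-M}$ to overtake $e_0(k)=e_k/(p-1)$, which is what guarantees $\ell_m\le pt_0(k_m)$; hence some $m$ in $[M,M+R]$ works.

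The hard part is this last step: giving an honest uniform upper bound on the effective level $\ell_m$ that is certain to fall below $pt_0(k_m)$ after at most $R$ layers. Concretely this means controlling how far the unit $\zeta_{p^m}-1$ can be pushed up the filtration modulo $p$-th powers, equivalently pinning down the ramification break $u_m$ of $k_{m+1}/k_m$. Once $\ell_m$ is controlled, the coprimality $p\nmid\ell_m$ and the index bookkeeping feeding \autoref{lem:HilbertIm} and \autoref{lem:Hilbert} are routine.
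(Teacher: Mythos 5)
Your skeleton is the same as the paper's: reduce generation of $K(k_m;E,\Gm)/p$, via \autoref{lem:Hilbert}, to the non-vanishing of two Hilbert symbols against $\zeta_{p^m}$; feed \autoref{lem:HilbertIm} with the effective level $\ell_m$ of $\zeta_{p^m}$ in the filtration $\Ubar_{k_m}^{\bullet}$ (coprime to $p$ and below $pe_0(k_m)$, as in the paper, by \cite[Lemma 3.4]{Gazaki/Hiranouchi2021} and \cite[Lemma 2.1.5]{Kaw02}); and climb the cyclotomic tower, whose totally ramified layers multiply the thresholds $pt_0(k_m)$ and $p(e_0(k_m)-t_0(k_m))$ by $p$, until they exceed $\ell_m$, the integer $R$ entering exactly as you say. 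But the step you yourself flag as ``the hard part''---an actual upper bound on $\ell_m$ along the tower---is the whole mathematical content of the lemma, and you have not supplied it. In the paper this is \autoref{lem:GL}: if $x\in\Ubar_k^i\ssm\Ubar_k^{i+1}$ with $p\nmid i$ and $0<i<pe_0(k)$, then $\sqrt[p]{x}\in\Ubar_K^i\ssm\Ubar_K^{i+1}$ over $K=k(\sqrt[p]{x}\,)$; applied to $x=\zeta_{p^m}$, so that $\sqrt[p]{x}=\zeta_{p^{m+1}}$, it shows the effective level is exactly \emph{constant} in $m$, and it is proved by an explicit case-by-case computation of $v_K(\xi^p-1)$ using \cite[(5.7)]{FV02}. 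Your proposed substitute---that the ramification breaks $u_m$ of $k_{m+1}/k_m$ ``grow like $p^m$,'' forcing $\ell_m$ down---is circular: by the very relation $\ell_m=pe_0(k_m)-u_m$ that you invoke, growth of $u_m$ \emph{is} boundedness of $\ell_m$, so it cannot serve as its proof, and you cite no result that computes these breaks over an arbitrary $p$-adic field $k$. Note also that the same missing lemma is what sustains the induction in two further places where you make unproved assertions: that each layer $k_{m+1}/k_m$ (not just the first) is totally ramified, and that $\ell_m$ remains coprime to $p$ for $m>M$; in the paper both follow because level data in the admissible range at stage $m$ forces total ramification at stage $m+1$.

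Separately, your claim that $t_0(k_m)=e_0(k_m)/(p+1)$---i.e.\ that every nonzero point of $\Ehat[p]$ has valuation $e_k/(p^2-1)$---is false in general for supersingular reduction: the Newton polygon of multiplication by $p$ on the height-two formal group $\Ehat$ can have two distinct slopes (this happens exactly when $\Ehat$ admits a canonical subgroup, which can occur once $e_k$ is large), and then $t_0(k)>e_0(k)/(p+1)$. Consequently your reduction to the single inequality $\ell_m\le pt_0(k_m)$ may target the wrong threshold. This error is inessential, though: both thresholds scale by $p$ at each totally ramified layer, so it suffices to arrange $\ell_m\le p\min\set{t_0(k_m),\,e_0(k_m)-t_0(k_m)}$, which is what the paper does, and which your bookkeeping would deliver verbatim once the level bound is in place.
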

\begin{proof}
	Recalling from \cite[Lemma 3.4]{Gazaki/Hiranouchi2021}, 
	we have $\Ubar_k^i = 1$ for $i>pe_0(k)$ 
	and $\Ubar_k^{i} = \Ubar_k^{i+1}$ for $i$ with $p\mid i$. 
	For some $i\le pe_0(k)$ which is prime to $p$ or $i = pe_0(k)$, 
	we have $\zeta = \zeta_{p^M}\in\Ubar_k^i\setminus\Ubar_k^{i+1}$. 
	From the assumption $M = \Mur$, 
	$k_{M+1} = k(\zeta_{p^{M+1}})/k$ is a totally ramified extension of degree $p$. 
	In the case 	$i = pe_0(k)$, the extension $k_{M+1}/k$ is unramified (\cite[Lemma 2.1.5]{Kaw02}) 
	so we conclude that $i< pe_0(k)$.
	If we have 
	\begin{equation}
	\label{eq:t}
		i \le  \min\set{pt_0(k) ,p(e_0(k) -t_0(k))}, 
	\end{equation}
	then 
	$i + pt_0(k), i + p(e_0(k) - t_0(k)) \le pe_0(k)$. 
	There exist  $u'\in \Ubar_k^{pt_0(k)}$ and $u\in \Ubar_k^{p(e_0(k)-t_0(k))}$ 
	such that $(u',\zeta)_p\neq 0$  and $(u,\zeta)_p \neq 0$ (\autoref{lem:HilbertIm}). 
	Thus, the elements 
	$\set{(u',1), \zeta}_{k/k}$ and $\set{(1,u), \zeta}_{k/k}$ 
	generate $K(k;E,\Gm)/p$ by \autoref{lem:Hilbert}. 
	The assertion holds for $m = M$ and for $k = k_M$. 
	
	Suppose that the above inequality \eqref{eq:t} does not hold. 
	It follows by \autoref{lem:GL} below that if we replace $k$ with $k_{M+1}$, then $\zeta_{p^{M+1}}\in\Ubar_{k_{M+1}}^i\setminus\Ubar_{k_{M+1}}^{i+1}$, while $e_0(k_{M+1})=pe_0(k)$, and $t_0(k_{M+1})=pt_0(k)$. Since $i<pe_0(k)$ and we defined $R$ to be the smallest nonnegative integer such that  $pe_0(k)\leq p^{R}$,
	it follows that 
	%by \cite[Lemma~3.23]{Gazaki/Leal2018}, 
	there exists $r \le R$ such that  
	over the extension $k_{m} =  k(\mu_{p^{m}})/k$,
	with $m = M+r$,  
	we have 
	\[
		i  \le \min\set{pt_0(k_m),p(e_0(k_m)-t_0(k_m))} 
		= p^r\set{pt_0(k),p(e_0(k)-t_0(k))}. 
	\]
	Applying \autoref{lem:Hilbert} and \autoref{lem:HilbertIm} as above 
	to $k_m$, one can find generators of the form 
	$\set{a, \zeta_{p^m}}_{k_m/k_m}$ as required. 
\end{proof}

\begin{lem}[\Cf {\cite[Lemma~3.23]{Gazaki/Leal2018}} for the case $M\ge 2$]\label{lem:GL}
	We assume $\mu_p\subset k$. 
	Let $x \in \Ubar_k^i\ssm \Ubar_k^{i+1}$, where $0 < i < pe_0(k)$ and $i$ is coprime to $p$. 
	Let $K = k(\sqrt[p]{x})$ and write $\xi = \sqrt[p]{x}$. 
	Then, $\xi \in \Ubar_K^{i}\ssm \Ubar_K^{i+1}$.
\end{lem}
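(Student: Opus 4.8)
The plan is to fix a representative $x\in U_k^i$ with $v_k(x-1)=i$, which is possible because $x\in\Ubar_k^i\ssm\Ubar_k^{i+1}$ forces the class of $x$ in $k^\times/p$ to be nontrivial; in particular $x\notin(k^\times)^p$, so $K=k(\xi)$ is a degree $p$ extension of $k$. I would then split the assertion into two claims: (a) with respect to the normalized valuation $v_K$ on $K$ one has $v_K(\xi-1)=i$, so that $K/k$ is totally ramified of degree $p$ (whence $e_0(K)=pe_0(k)$) and $\xi\in U_K^i$, giving $\xi\in\Ubar_K^i$; and (b) $\xi\notin U_K^{i+1}\cdot(K^\times)^p$, which is precisely $\xi\notin\Ubar_K^{i+1}$.

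For claim (a) I would compute the Newton polygon of the minimal polynomial of $y=\xi-1$, namely $f(Y)=(1+Y)^p-x=Y^p+\binom{p}{p-1}Y^{p-1}+\cdots+\binom{p}{1}Y-(x-1)$. Its relevant vertices are $(0,i)$, coming from the constant term $-(x-1)$ of valuation $i$; the interior points $(j,e_k)$ for $1\le j\le p-1$, since $v_k\big(\binom{p}{j}\big)=e_k$; and $(p,0)$. The segment joining $(0,i)$ to $(p,0)$ has slope $-i/p$, and the hypothesis $i<pe_0(k)$, equivalently $i(p-1)/p<e_k$, ensures that every interior vertex lies strictly above this segment; hence the polygon is a single segment of slope $-i/p$. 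Because $\gcd(i,p)=1$, standard Newton polygon theory then yields that $K/k$ is totally ramified of degree $p$ and that $v_K(\xi-1)=v_K(y)=i$, establishing (a).

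For claim (b) I would argue by contradiction, assuming $\xi=v z^p$ with $v\in U_K^{i+1}$ and $z\in K^\times$. As $\xi$ and $v$ are principal units, so is $z^p$, forcing the residue of $z$ to be the unique $p$-th root of unity $1$ in the residue field, hence $z\in U_K^1$; write $z\in U_K^m\ssm U_K^{m+1}$. The relation $\xi z^{-p}\in U_K^{i+1}$ together with $v_K(\xi-1)=i$ forces $v_K(z^p-1)=i$, so the level of $z^p$ equals $i$. The main point is then the standard description of how the $p$-th power map acts on the unit filtration: $z^p$ has level $pm$ if $m<e_0(K)$, level $m+e_K$ if $m>e_0(K)$, and level $\ge pe_0(K)$ if $m=e_0(K)$. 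None of these can equal $i$: the value $pm$ is divisible by $p$ while $p\nmid i$, and both $m+e_K$ (for $m>e_0(K)$) and the boundary value (for $m=e_0(K)$) are $\ge pe_0(K)$, which exceeds $i$ because $i<pe_0(k)=e_0(K)<pe_0(K)$. This contradiction proves (b). The step I expect to require the most care is (b), precisely the bookkeeping of which levels in the unit filtration are hit by $p$-th powers, since it is here that the hypotheses $\gcd(i,p)=1$ and $i<pe_0(k)$ are used in an essential way; claim (a) is a routine Newton polygon computation.
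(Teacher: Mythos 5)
Your proof is correct, and it differs from the paper's in an instructive way. The paper obtains your claim (a) by citing \cite[Lemma 2.1.5]{Kaw02} for the total ramification of $K/k$, and then pins down the exact level of $\xi$ by applying the description of the $p$-th power map on higher unit groups (\cite[(5.7)]{FV02}) to $\xi$ itself: writing $\xi \in U_K^j \ssm U_K^{j+1}$ and using $v_K(\xi^p-1) = v_K(x-1) = pi$, the cases $j > e_0(K)$ and $j = e_0(K)$ both force $i \ge pe_0(k)$, a contradiction, leaving $pj = pi$, i.e.\ $j = i$. Your Newton polygon computation replaces both ingredients at once: the single-slope condition (which is exactly where $p \nmid i$ and $i < pe_0(k)$ enter) yields total ramification and $v_K(\xi - 1) = i$ simultaneously, making this part self-contained rather than reference-driven. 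Conversely, your claim (b) is precisely the step the paper compresses into its final sentence (``As $\xi \in U_K^i \ssm U_K^{i+1}$, the residue class $\ol{\xi}$ is in $\Ubar_K^i \ssm \Ubar_K^{i+1}$''): since $\Ubar_K^{i+1} = \Im(U_K^{i+1} \to K^{\times}/p)$, one must rule out $\xi \in U_K^{i+1}\cdot (K^{\times})^p$, and your bookkeeping --- $p$-th powers of principal units only land in levels $pm$ (divisible by $p$) or levels $\ge pe_0(K)$, neither of which can equal the prime-to-$p$ integer $i < e_0(K)$ --- is exactly the justification the paper leaves implicit, resting on the same \cite[(5.7)]{FV02} computation it already used. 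In short, both arguments hinge on the same structural fact about $p$-th powers and the unit filtration; you invoke it only once (for (b)) and substitute a Newton polygon for the paper's two citations in (a), which makes your write-up slightly more elementary and, at the final step, more complete than the paper's.
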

\begin{proof}
	In this proof, we denote by $\ol{x}$ the residue class in $\Ubar_k^i = U_k^i/U_k^i\cap (k^{\times})^p$ represented by 
	the unit $x\in U_k^i$. 
	First, we note that the extension $K/k$ is a totally ramified extension of degree $p$ (\cite[Lemma 2.1.5]{Kaw02}). 
	Thus, $v_K(x-1) = pv_k(x-1) = pi$. 
	Suppose that $\xi = \sqrt[p]{x}$ is in $U_K^j\ssm U_K^{j+1}$ for some $j$ and write $\xi = 1 + u\pi_K^j$ for a unit $u\in \OKt$, where $\pi_K$ is a fixed uniformizer of $K$. 
	From \cite[(5.7)]{FV02}, we calculate the valuation of $\xi^p-1 = x-1$ as follows: 
	\begin{itemize}
		\item If $j > e_0(K) = pe_0(k)$, then $\xi^p\equiv 1+ u' \pi_K^{j+e_K}\bmod \pi_K^{j+e_K+1}$ for some unit $u'\in \OKt$. Thus, 
		\[
		pi = v_K(x-1) = v_K(\xi^p-1) = j+e_K >pe_0(k)+pe_k = p^2e_0(k).
		\]
		This gives $i > pe_0(k)$ and contradicts with the assumption on $i$. 
	\item If $j = e_0(K)$, then $\xi^p \equiv 1+ (u^p + u')\pi_K^{pe_0(K)} \bmod \pi_K^{pe_0(K)+1}$ for some unit $u'\in \OKt$ and hence 
	\[
		pi = v_K(x-1) = v_K(\xi^p-1) \ge pe_0(K) = p^2e_0(k). 
	\]
	Therefore, $i \ge pe_0(k)$, which is again a contradiction. 
	\item If $j < e_0(K)$, then $\xi^p \equiv 1 + u^p\pi_K^{pj}\bmod \pi_K^{pj+1}$. We have 
	\[
		pi = v_K(x-1) = v_K(\xi^p -1) = pj.
	\]
	This implies $i = j$.
	\end{itemize}
	As $\xi \in U_K^i\ssm U_K^{i+1}$, the residue class $\ol{\xi}$ is in $\Ubar_K^i\ssm \Ubar_K^{i+1}$.
\end{proof}

\begin{thm}\label{thm:main2}
	Assume that $E[p]\subset E(k)$. 
	Then, we have surjective homomorphisms 
	\[
		(\Z/p^{M^{\ur} + R})^{\oplus 2} \surj \Ker(\dE)_{\fin} \surj (\Z/p^{N})^{\oplus 2}, 
	\]
	where 
	$R = \min\set{r | e_k < (p-1)p^r}$.
\end{thm}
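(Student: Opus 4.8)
The plan is to pin down $\Ker(\dE)_{\fin}$ as a finite abelian $p$-group of rank exactly $2$ whose exponent lies between $p^{N}$ and $p^{\Mur+R}$; both displayed surjections then follow formally, since a finite abelian $p$-group of rank $2$ and exponent dividing $p^{e}$ is a quotient of $(\Z/p^{e})^{\oplus 2}$, while one admitting $(\Z/p^{N})^{\oplus 2}$ as a quotient surjects onto it. Throughout I would use three standing facts. First, as $\Ebar$ is supersingular, \eqref{eq:dE} gives $\Ker(\dE)/p^{n}\simeq V(E)/p^{n}\simeq K(k;E,\Gm)/p^{n}$ for every $n$, recalling $V(E)\simeq K(k;E,\Gm)$ from \eqref{thm:Som}. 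Second, $\Ker(\dE)_{\fin}$ is a finite $p$-group (\autoref{lem:l-part}). Third, the Galois symbol $s_{p^{n}}\colon K(k;E,\Gm)/p^{n}\isomto H^{2}(k,E[p^{n}]\otimes\mu_{p^{n}})$ is bijective for all $n$ (\Cf the discussion around \eqref{diag:sm}). In particular, taking $n=1$ and using $E[p]\subset E(k)$ already forces $\Ker(\dE)_{\fin}/p\simeq(\Z/p)^{\oplus 2}$, so the rank is exactly $2$.

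For the lower bound I would exploit that $E[p^{N}]\subset E(k)$ is a trivial $G_{k}$-module isomorphic to $(\Z/p^{N})^{\oplus 2}$, and that $\mu_{p^{N}}\subset k$ by the Galois equivariance of the Weil pairing (so $N\le M$). Hence $E[p^{N}]\otimes\mu_{p^{N}}\simeq(\mu_{p^{N}})^{\oplus 2}$ and $H^{2}(k,E[p^{N}]\otimes\mu_{p^{N}})\simeq H^{2}(k,\mu_{p^{N}})^{\oplus 2}\simeq(\Z/p^{N})^{\oplus 2}$. Combining the bijectivity of $s_{p^{N}}$ with \eqref{eq:dE} yields $\Ker(\dE)_{\fin}/p^{N}\simeq\Ker(\dE)/p^{N}\simeq(\Z/p^{N})^{\oplus 2}$, and composing with $\Ker(\dE)_{\fin}\surj\Ker(\dE)_{\fin}/p^{N}$ gives the surjection onto $(\Z/p^{N})^{\oplus 2}$.

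The upper bound is the heart of the matter. First I would record that, for \emph{any} finite extension $k''/k$, the norm $N_{k''/k}\colon K(k'';E,\Gm)/p\to K(k;E,\Gm)/p$ is surjective. This comes from comparing, through the bijective symbols $s_{p}$ over $k$ and $k''$, the norm with the corestriction $\Cor_{k''/k}$ on $H^{2}(-,E[p]\otimes\mu_{p})$; since $E[p]$ is a trivial module this group is $H^{2}(-,\mu_{p})^{\oplus 2}$, the $p$-torsion of the Brauer group, and corestriction between the Brauer groups of two local fields is an isomorphism (it is the identity under the invariant maps), so $N_{k''/k}$ is in fact bijective mod $p$. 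Next I would choose a finite \emph{unramified} $k'/k$ with $M=\Mur$ over $k'$; this leaves $\Mur$ and $R$ unchanged (they depend only on $k^{\ur}$ and $e_{k}$) and preserves supersingularity and $E[p]\subset E(k)$. Applying \autoref{lem:gen2} over $k'$ produces $m\le\Mur+R$ and generators of $K(k'_{m};E,\Gm)/p$ of the form $\set{a,\zeta_{p^{m}}}_{k'_{m}/k'_{m}}$, with $k'_{m}=k'(\mu_{p^{m}})$. Descending by $N_{k'_{m}/k}$ (surjective mod $p$) and using \eqref{eq:norm map}, the elements $\set{a,\zeta_{p^{m}}}_{k'_{m}/k}$ generate $K(k;E,\Gm)/p$ over the \emph{original} $k$, and each is killed by $p^{m}$ since $p^{m}\set{a,\zeta_{p^{m}}}_{k'_{m}/k}=\set{a,\zeta_{p^{m}}^{p^{m}}}_{k'_{m}/k}=0$. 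Feeding this into the exact sequence $K(k;E,\Gm)/p\xrightarrow{\cdot p^{n}}K(k;E,\Gm)/p^{n+1}\to K(k;E,\Gm)/p^{n}\to 0$ shows the left map vanishes for $n\ge m$, so the tower $K(k;E,\Gm)/p^{n}$ stabilizes from $n=m$; via \eqref{eq:dE} this forces $p^{m}\Ker(\dE)_{\fin}=p^{m+1}\Ker(\dE)_{\fin}$, hence $p^{m}\Ker(\dE)_{\fin}=0$ by finiteness, giving exponent at most $p^{\Mur+R}$.

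I expect the main obstacle to be the norm-surjectivity statement mod $p$ together with the bookkeeping of the unramified descent: one must justify that $N_{k''/k}$ corresponds to $\Cor_{k''/k}$ (functoriality of the Galois symbol, \autoref{def:symbol}), that passing to $k'$ keeps $\Mur$ and $R$ fixed while all hypotheses survive, and that the symbols built over $k'_{m}$ norm down to \emph{generators over the original $k$} so that the stabilization argument, and hence the exponent bound, is genuinely carried out over $k$. The remaining ingredients — \autoref{lem:gen2}, the bijectivity of the symbols, and \eqref{eq:dE} — are already available.
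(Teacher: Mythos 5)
Your proof is correct, and its skeleton coincides with the paper's: both arguments rest on \autoref{lem:gen2} (applied after passing to a field where $M=\Mur$), the observation that the resulting generators $\set{a,\zeta_{p^m}}$ of the mod-$p$ $K$-group are killed by $p^m\le p^{\Mur+R}$, the consequent vanishing of $p^n\colon K(k;E,\Gm)/p\to K(k;E,\Gm)/p^{n+1}$ and stabilization of the tower $K(k;E,\Gm)/p^n$ for $n\ge m$, and the identification \eqref{eq:dE} together with the computation $K(k;E,\Gm)/p^N\simeq(\Z/p^N)^{\oplus 2}$ for the lower bound (which the paper cites from Hir16, Remark 4.3, and you re-derive from the bijective Galois symbol and the Weil pairing). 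You deviate in three supporting steps, all sound, and each buys something. First, where the paper quotes \cite[Proposition 3.1]{Yam09} for integral norm surjectivity and then says ``we may assume $M=\Mur$'', you prove mod-$p$ norm bijectivity directly from the compatibility of the Galois symbol with corestriction and the fact that $\Cor$ is an isomorphism on the $p$-torsion of Brauer groups of local fields; this is self-contained and suffices, since you only ever descend mod-$p$ generators. Second, you keep the original $k$ throughout and norm the generators down from the auxiliary field, rather than replacing $k$ at the outset; your explicit check that $\Mur$, $R$, supersingularity and $E[p]\subset E(k)$ survive the unramified base change is exactly what justifies the paper's ``WLOG'' as well. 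Third, your endgame is structural --- a finite abelian $p$-group with $G/p\simeq(\Z/p)^{\oplus 2}$ and exponent dividing $p^{\Mur+R}$ is a quotient of $(\Z/p^{\Mur+R})^{\oplus 2}$ --- whereas the paper instead exhibits a surjection from $K(K;E,\Gm)/p^{\Mur+R}\simeq(\Z/p^{\Mur+R})^{\oplus 2}$ over the larger field $K=k(E[p^{\Mur+R}])$; your version avoids that extra field entirely. One shared wrinkle to flag: you cite \autoref{lem:l-part} for ``$\Ker(\dE)_{\fin}$ is a $p$-group'', but that lemma is stated in \autoref{sec:AV} under the standing hypothesis of good \emph{ordinary} reduction, so in the supersingular setting it needs a separate (easy) remark on the prime-to-$p$ part; the paper's own last sentence implicitly relies on the same fact, so this is an inherited imprecision rather than a gap you introduced.
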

\begin{proof}
	As we noted in \eqref{eq:dE}, 
	we have $\Ker(\dE)/p^n \simeq V(E)/p^n \simeq K(k;E,\Gm)/p^n$ for any $n\ge 1$. 
	
	\sn 
	\textbf{(Lower bound)}  
	Recalling from \eqref{thm:Som}, we have $\VE \simeq K(k;E,\Gm)$. 
	The lower bound is given by 
	\begin{equation}
	\label{eq:Hir16}
			\Ker(\dE) \surj \Ker(\dE)/p^{N} \simeq K(k;E,\Gm)/p^{N} \simeq (\Z/p^{N})^{\oplus 2}, 
	\end{equation}
	where  the last isomorphism follows from \cite[Remark 4.3]{Hir16}. 
	
	\sn 
	\textbf{(Upper bound)}  
	Since the norm map $K(k';E,\Gm) \to K(k;E,\Gm)$ is surjective 
	for any finite extension $k'/k$ (\cite[Proposition 3.1]{Yam09}), 
	we may assume  $M = M^{\ur}$. 
	In particular, 
	the Kummer extension $k(\mu_{p^{M+1}})/k$ is a totally ramified $p$-extension. 
	Take $m\le M+R$ as in  \autoref{lem:gen2} and put  $k_m = k(\mu_{p^m})$. 
	For each $n\ge m$, 
	we consider the following diagram with exact rows:
	\[
	\xymatrix@C=5mm{
		0 \ar[r]&  K(k_m;E,\Gm)/p \ar@{->>}[d] \ar[r]^{p^n}& K(k_m;E,\Gm)/p^{n+1}\ar@{->>}[d] \ar[r] &  K(k_m;E,\Gm)/p^n\ar[r]\ar@{->>}[d] &  0\\ 
			 0 \ar[r] & K(k;E,\Gm)/p \ar[r]^{p^n}  & K(k;E,\Gm)/p^{n+1} \ar[r] & K(k;E,\Gm)/p^n\ar[r] & 0, 
	  }
	\]
	where the vertical maps are given by norms which are surjective. 
	The far left norm map $K(k_m;E,\Gm)/p \to K(k;E,\Gm)/p$ is bijective 
	because of   
	$K(k_m;E,\Gm)/p\simeq K(k;E,\Gm)/p \simeq \left(\Z/p\right)^{\oplus 2}$ 
	using the assumption $E[p]\subset E(k)$ as in \eqref{eq:Hir16}. 
	It follows by \autoref{lem:gen2} that the map $p^n\colon K(k_m;E,\Gm)/p \to  K(k_m;E,\Gm)/p^{n+1}$ 
	is the 0-map and so is  $p^n \colon K(k;E,\Gm)/p \to  K(k;E,\Gm)/p^{n+1}$. 
	From the above diagram, we have  
	$K(k;E,\Gm)/p^{n+1} \simeq K(k;E,\Gm)/p^n$ for any $n\ge m$. 
	Putting $K = k(E[p^{M+R}])$, 
	there are surjective homomorphisms 
	\[
	(\Z/p^{M+R})^{\oplus 2} \simeq K(K;E,\Gm)/p^{M+R} \surj  K(K;E,\Gm)/p^{m}  \surj K(k;E,\Gm)/p^m. 
	\]
	Here, the last map is induced from the norm map which is surjective. 
	From this, we have 
	\[
	(\Z/p^{M+R})^{\oplus 2}\surj K(k;E,\Gm)/p^n \simeq \Ker(\dE)/p^n
	\]
	for any $n\ge 1$. 
	This implies the existence of a surjective homomorphism 
	$(\Z/p^{M+R})^{\oplus 2} \surj \Ker(\dE)_{\fin}$ as required. 
\end{proof}

%%%%%%%%%%%%%%%%%%
\appendix

\section{Profinite Group Homology}
In this appendix, we show the following proposition 
which is used in \cite[(2.21)]{Blo81} and \cite[Section 3]{Som90}: 

\begin{prop}
\label{prop:coinv}
	Let $l$ be a prime, 
	$A$ a semi-abelian variety over a $p$-adic field $k$, and 
	$G$ a closed normal subgroup of $G_k$. 
	Then, we have 
	\[
	T_l(A)_G \simeq \plim_n [(A[l^n])_G].
	\]
\end{prop}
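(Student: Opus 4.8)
The plan is to reduce the statement to the $l$-adic completeness of a finitely generated $\Z_l$-module. Write $T := T_l(A) = \plim_n A[l^n]$, a free $\Z_l$-module of finite rank carrying a continuous action of $G_k$, which I restrict to $G$. Since $k$ has characteristic $0$, the group $A(\kbar)$ is divisible, so $A[l^{\infty}]$ is $l$-divisible and the projection $T \to A[l^n]$ identifies $A[l^n] \simeq T/l^n T$ as $G$-modules; under this identification the transition maps $A[l^{n+1}] \to A[l^n]$ (multiplication by $l$) become the natural reductions $T/l^{n+1}T \to T/l^n T$, so that $T = \plim_n T/l^n T$. The projections $T \to A[l^n]$ induce, after applying $(-)_G$, a compatible family $T_G \to (A[l^n])_G$ and hence a natural map $T_G \to \plim_n (A[l^n])_G$, which I must show is an isomorphism.

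The key algebraic point is that coinvariants commute with reduction mod $l^n$. For any $\Z_l[G]$-module $M$, writing $I_G M := \sum_{\sigma \in G}(\sigma-1)M$ so that $M_G = M/I_G M$, one checks directly that both $(M/l^n M)_G$ and $M_G/l^n M_G$ are equal to $M/(I_G M + l^n M)$: indeed $I_G(M/l^n M)$ is the image of $I_G M$ in $M/l^n M$, and the resulting identification is natural in $M$. Applying this with $M = T$ gives
\[
(A[l^n])_G \simeq (T/l^n T)_G \simeq T_G/l^n T_G
\]
compatibly with the transition maps (which become the reduction maps on both sides), whence $\plim_n (A[l^n])_G \simeq \plim_n T_G/l^n T_G$.

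It then remains to observe that $T_G$ is already $l$-adically complete. Since $T$ is finitely generated over the Noetherian ring $\Z_l$, the quotient $T_G = T/I_G T$ is a finitely generated $\Z_l$-module, and every such module is $l$-adically complete, i.e. $T_G \isomto \plim_n T_G/l^n T_G$. Combining this with the previous display yields $T_G \simeq \plim_n (A[l^n])_G$, and tracing through the isomorphisms shows that this coincides with the natural map described above.

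I expect the only genuine subtlety to lie in the interaction between the right-exact functor $(-)_G$ and the inverse limit defining $T$: this is exactly the kind of commutation that fails for a general $G$-module, and here it is rescued by the finite generation of $T$ over $\Z_l$, which makes $T_G$ complete. Finite generation also makes $I_G T$ a closed $\Z_l$-submodule of $T$, so that the algebraic coinvariants agree with the topological coinvariants appropriate to the profinite group $G$, and no continuity issue arises.
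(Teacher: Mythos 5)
Your proof is correct, but it takes a genuinely different route from the paper's. The paper treats $T=T_l(A)$ as a profinite $\Z_l[\![G]\!]$-module, computes homology via the profinite bar resolution, invokes the ${\plim}^1$-exact sequence for towers of chain complexes satisfying Mittag--Leffler to obtain $0 \to {\plim_n}^1 H_1(G,A[l^n]) \to T_G \to \plim_n\,(A[l^n])_G\to 0$, and then kills the ${\plim}^1$-term by showing each $H_1(G,A[l^n])$ is finite, using Pontrjagin duality $H_1(G,A[l^n])^{\vee}\simeq H^1(G,(A[l^n])^{\vee})$ and inflation-restriction. You bypass all of this homological machinery by exploiting the specific shape of the tower: $A[l^n]\simeq T/l^nT$ (which rests on the $l$-divisibility of $A(\kbar)$), coinvariants commute with reduction mod $l^n$ since both $(T/l^nT)_G$ and $T_G/l^nT_G$ equal $T/(I_GT+l^nT)$, and the finitely generated $\Z_l$-module $T_G$ is $l$-adically complete. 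Your closing remark is the one place where real care is needed, and you handle it correctly: the $T_G$ in the proposition is the profinite $H_0$, i.e.\ $T/\overline{I_GT}$, but since $T$ is a Noetherian $\Z_l$-module, $I_GT$ is a finitely generated submodule, hence compact (a continuous image of some $\Z_l^m$) and therefore closed, so the abstract coinvariants agree with the topological ones. As for what each approach buys: yours is more elementary and self-contained, but leans essentially on $T$ being finitely generated over $\Z_l$ and on the system being exactly $\{T/l^nT\}$; the paper's argument isolates the obstruction as ${\plim_n}^1 H_1(G,A[l^n])$ and applies to an arbitrary tower of finite $G$-modules and in all homological degrees---generality not needed for this proposition but obtained for free by that method.
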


Put $T := T_l(A)$ and $A_n := A[l^n]$.
Using this notation, 
$T = \plim_n A_n$ can be regarded as a profinite $\Z_l[\![G]\!]$-module.
Recall that, for a profinite $\Z_l[\![G]\!]$-module $M$, 
the $m$-th \textbf{homology group} $H_m(G,M)$ of $G$ with coefficients in $M$ is 
given by 
the $m$-th left derived functor of $-\widehat{\otimes}_{\Z_l[\![G]\!]}\Z_l$ 
(\Cf \cite[Section 6.3]{RZ10}). 
The homology group $H_m(G,M)$ can be computed by using the homogeneous bar resolution $L_{\bullet} \surj \Z_l$ as follows: 
\[
H_m(G,M) = H_m(M\widehat{\otimes}_{\Z_l[\![G]\!]}L_\bullet)
\]
(\Cf \cite[Theorem 6.3.1]{RZ10}). 
Each term $L_m$ in $L_{\bullet}$ is a free profinite $\Z_l[\![G]\!]$-module, so that 
we have $\plim(A_n\widehat{\otimes}_{\Z_l[\![G]\!]}L_\bullet) = T\widehat{\otimes}_{\Z_l[\![G]\!]}L_\bullet$ 
and 
\[
H_m(G,T) = H_m(T\widehat{\otimes}_{\Z_l[\![G]\!]}L_\bullet),\ H_m(G,A_n) = 
H_m(A_n\widehat{\otimes}_{\Z_l[\![G]\!]}L_\bullet). 
\]
As $A_n\widehat{\otimes}_{\Z_l[\![G]\!]} L_m = A_n\otimes_{\Z/l^n[\![G]\!]}L_m/l^n$ is finite, 
the tower of chain complexes $\cdots \to A_n\widehat{\otimes}_{\Z_l[\![G]\!]} L_\bullet \to\cdots \to A_1\widehat{\otimes}_{\Z_l[\![G]\!]} L_\bullet$ satisfies the Mittag-Leffler condition.
By \cite[Theorem 3.5.8]{Wei94}, we have an exact sequence for each $m$: 
\[
0 \to {\plim_n}^1 H_{m+1}(G,A_n)\to H_m(G,T) \to \plim_n H_m(G,A_n)\to 0.
\]
In particular, we have 
\[
0 \to {\plim_n}^1 H_1(G,A_n) \to T_G \to \plim_n (A_n)_G\to 0.
\]
Here, $H_1(G,A_n)^{\vee} \simeq H^1(G,A_n^{\vee})$, where 
$\vee$ denotes the Pontrjagin dual. 
Since $A_n^{\vee}$ is finite, 
the action of $G$ on $A_n^{\vee}$ factors through 
a finite quotient $G/K_n$ for some open normal subgroup $K_n\subset G$.  
By the inflation-restriction sequence (\cite[Corollary 7.2.5]{RZ10}, 
we have a short exact sequence
\[
0 \to H^1(G/K_n,A_n^{\vee}) \xrightarrow{\inf} H^1(G,A_n^{\vee}) \xrightarrow{\Res}H^1(K_n,A_n^{\vee}).
\]
As $H^1(K_n,A_n^{\vee}) = \Hom_{\mathrm{cont}}(K_n,A_n^{\vee})$ and 
$H^1(G/K_n,A_n^{\vee})$ are finite abelian groups, so is $H^1(G,A_n^{\vee})$ 
and hence $H_1(G,A_n)$ is finite. 
From this, the tower $\cdots \to H_1(G,A_{n+1}) \to H_1(G,A_{n}) \to \cdots \to H_1(G,A_1)$ satisfies the Mittag-Leffler condition (\Cf \cite[Exercise 3.5.1]{Wei94}). We have $\displaystyle{\plim_n}^1 H_1(G,A_n) = 0$ by \cite[Proposition 3.5.7]{Wei94}).
This gives \autoref{prop:coinv}.

\def\cprime{$'$}
\providecommand{\bysame}{\leavevmode\hbox to3em{\hrulefill}\thinspace}

\providecommand{\href}[2]{#2}

%\bibliographystyle{amsalpha}
%\bibliography{bound}

\begin{thebibliography}{Kah92b}

\bibitem[Blo81]{Blo81}
S.~Bloch, \emph{Algebraic {$K$}-theory and classfield theory for arithmetic
  surfaces}, Ann. of Math. (2) \textbf{114} (1981), no.~2, 229--265.

\bibitem[BLR90]{BLR90}
S.~Bosch, W.~L\"{u}tkebohmert, and M.~Raynaud, \emph{N\'{e}ron
  models}, Ergebnisse der Mathematik und ihrer Grenzgebiete (3) [Results in
  Mathematics and Related Areas (3)], vol.~21, Springer-Verlag, Berlin, 1990.
  
\bibitem[CG96]{CG96}
J.~Coates and R.~Greenberg, \emph{Kummer theory for abelian varieties over
  local fields}, Invent. Math. \textbf{124} (1996), no.~1-3, 129--174.

\bibitem[Deu41]{Deur41}
M.~Deuring, \emph{Die {T}ypen der {M}ultiplikatorenringe elliptischer
  {F}unktionenk\"{o}rper}, Abh. Math. Sem. Hansischen Univ. \textbf{14} (1941),
  197--272.
   
\bibitem[DM69]{DM69}
P.~Deligne and D.~Mumford, \emph{The irreducibility of the space of curves of
  given genus}, Inst. Hautes \'{E}tudes Sci. Publ. Math. (1969), no.~36,
  75--109.

\bibitem[EvdG]{EMvdG}
B.~Moonen, B.~Edixhoven, and G.~van~der Geer, \emph{Abelian varieties},
  \url{https://www.math.ru.nl/~bmoonen/research.html#bookabvar}.

\bibitem[FV02]{FV02}
I.~B. Fesenko and S.~V. Vostokov, \emph{Local fields and their extensions},
  second ed., Translations of Mathematical Monographs, vol. 121, American
  Mathematical Society, Providence, RI, 2002, With a foreword by I. R.
  Shafarevich.
  
\bibitem[GH21]{Gazaki/Hiranouchi2021}
E.~Gazaki and T.~Hiranouchi, \emph{Divisibility results for
  zero-cycles}, European Journal of Math (2021), 1--44.

\bibitem[GL21]{Gazaki/Leal2018}
E.~Gazaki and I.~Leal, \emph{{Zero cycles on a product of elliptic
  curves over a $p$-adic Field}}, International Mathematics Research Notices
  (2021).

\bibitem[Gro71]{SGA1}
A.~Grothendieck, \emph{{R}ev\^etements \'etales et groupe fondamental},
  Springer-Verlag, Berlin, 1971, S\'eminaire de G\'eom\'etrie Alg\'ebrique du
  Bois Marie 1960--1961 (SGA 1), Dirig\'e par Alexandre Grothendieck.
  Augment\'e de deux expos\'es de M. Raynaud, Lecture Notes in Mathematics,
  Vol. 224.
  
\bibitem[Hir16]{Hir16}
T.~Hiranouchi, \emph{Milnor {$K$}-groups attached to elliptic curves over a
  {$p$}-adic field}, Funct. Approx. Comment. Math. \textbf{54} (2016), no.~1,
  39--55. 

\bibitem[Hir21]{Hir21}
\bysame, \emph{Galois symbol maps for abelian varieties over a {$p$}-adic
  field}, Acta Arith. \textbf{197} (2021), no.~2, 137--157.

\bibitem[HS00]{HS00}
M.~Hindry and J.~H. Silverman, \emph{Diophantine geometry: An introduction},
  Graduate Texts in Mathematics, vol. 201, Springer-Verlag, New York, 2000.

\bibitem[Ima80]{Ima80}
H.~Imai, \emph{On the rational points of some {J}acobian varieties over large
  algebraic number fields}, Kodai Math. J. \textbf{3} (1980), no.~1, 56--58.

\bibitem[Kah92a]{Kah92a}
B.~Kahn, \emph{The decomposable part of motivic cohomology and bijectivity of
  the norm residue homomorphism}, Algebraic {$K$}-theory, commutative algebra,
  and algebraic geometry ({S}anta {M}argherita {L}igure, 1989), Contemp. Math.,
  vol. 126, Amer. Math. Soc., Providence, RI, 1992, pp.~79--87.

\bibitem[Kah92b]{Kah92b}
\bysame, \emph{Nullit\'e de certains groupes attach\'es aux vari\'et\'es
  semi-ab\'eliennes sur un corps fini; application}, C. R. Acad. Sci. Paris
  S\'er. I Math. \textbf{314} (1992), no.~13, 1039--1042.

\bibitem[Kaw02]{Kaw02}
M.~Kawachi, \emph{Isogenies of degree {$p$} of elliptic curves over local
  fields and {K}ummer theory}, Tokyo J. Math. \textbf{25} (2002), 247--259.

\bibitem[KL81]{KL81}
N.~M. Katz and S.~Lang, \emph{Finiteness theorems in geometric classfield
  theory}, Enseign. Math. (2) \textbf{27} (1981), no.~3-4, 285--319 (1982),
  With an appendix by Kenneth A. Ribet.

\bibitem[KM85]{KM85}
N.~M. Katz and B.~Mazur, \emph{Arithmetic moduli of elliptic curves}, Annals of
  Mathematics Studies, vol. 108, Princeton University Press, Princeton, NJ,
  1985.

\bibitem[KS83]{KS83b}
K.~Kato and S.~Saito, \emph{Unramified class field theory of arithmetical
  surfaces}, Ann. of Math. (2) \textbf{118} (1983), no.~2, 241--275.

\bibitem[Lan87]{Lang87}
S.~Lang, \emph{Elliptic functions}, second ed., Graduate Texts in
  Mathematics, vol. 112, Springer-Verlag, New York, 1987, With an appendix by
  J. Tate. 

\bibitem[Mat55]{Mat55}
A.~Mattuck, \emph{Abelian varieties over {$p$}-adic ground fields}, Ann. of
  Math. (2) \textbf{62} (1955), 92--119.

\bibitem[Maz72]{Maz72}
B.~Mazur, \emph{Rational points of abelian varieties with values in towers of
  number fields}, Invent. Math. \textbf{18} (1972), 183--266.

\bibitem[Mil86]{MilneJac}
J.~S. Milne, \emph{Jacobian varieties}, Arithmetic geometry ({S}torrs, {C}onn.,
  1984), Springer, New York, 1986, pp.~167--212. 

\bibitem[NSW08]{NSW08}
J.~Neukirch, A.~Schmidt, and K.~Wingberg, \emph{Cohomology of number fields},
  second ed., Grundlehren der Mathematischen Wissenschaften [Fundamental
  Principles of Mathematical Sciences], vol. 323, Springer-Verlag, Berlin,
  2008.
  
%\bibitem[Poo17]{Poo17}
%B.~Poonen, \emph{Rational points on varieties}, Graduate Studies in
%  Mathematics, vol. 186, American Mathematical Society, Providence, RI, 2017.
% 
\bibitem[Raj69]{Raj69} A. R. Rajwade, \emph{Arithmetic on curves with complex multiplication by the ring of Eisenstein series}, Proc. Cam. Phil. Soc. (1969), vol. ~65, pp.~59-73.  
\bibitem[Ras95]{Ras95}
W.~Raskind, \emph{Abelian class field theory of arithmetic schemes},
  {$K$}-theory and algebraic geometry: connections with quadratic forms and
  division algebras ({S}anta {B}arbara, {CA}, 1992), Proc. Sympos. Pure Math.,
  vol.~58, Amer. Math. Soc., Providence, RI, 1995, pp.~85--187.

\bibitem[RS00]{RS00}
W.~Raskind and M.~Spiess, \emph{Milnor {$K$}-groups and zero-cycles on products
  of curves over {$p$}-adic fields}, Compositio Math. \textbf{121} (2000),
  1--33.

\bibitem[RZ10]{RZ10}
L.~Ribes and P.~Zalesskii, \emph{Profinite groups}, second ed., Ergebnisse der
  Mathematik und ihrer Grenzgebiete. 3. Folge. A Series of Modern Surveys in
  Mathematics [Results in Mathematics and Related Areas. 3rd Series. A Series
  of Modern Surveys in Mathematics], vol.~40, Springer-Verlag, Berlin, 2010.

\bibitem[Rub99]{Rub99} C.~Rubin, \emph{Elliptic curves with complex multiplication and the conjecture of Birch and Swinnerton-Dyer}, Arithmetic theory of elliptic curves (Cetraro 1997), Lecture Notes in Math., vol~1716, Springer Berlin 1999, pp.~167-234. 

\bibitem[Sai85]{Sai85a}
S.~Saito, \emph{Class field theory for curves over local fields}, J. Number
  Theory \textbf{21} (1985), no.~1, 44--80.

\bibitem[Ser68]{Ser68}
J.-P. Serre, \emph{Corps locaux}, Hermann, Paris, 1968, Deuxi\`eme \'edition,
  Publications de l'Universit\'e de Nancago, No. VIII.

\bibitem[Ser88]{Ser88}
\bysame, \emph{Algebraic groups and class fields}, Graduate Texts in
  Mathematics, vol. 117, Springer-Verlag, New York, 1988, Translated from the
  French.

\bibitem[Ser89]{Ser89}
\bysame, \emph{Abelian {$l$}-adic representations and elliptic curves}, second
  ed., Advanced Book Classics, Addison-Wesley Publishing Company, Advanced Book
  Program, Redwood City, CA, 1989, With the collaboration of Willem Kuyk and
  John Labute.
  
\bibitem[Sil09]{Sil106}
J.~H. Silverman, \emph{The arithmetic of elliptic curves}, second ed., Graduate
  Texts in Mathematics, vol. 106, Springer, Dordrecht, 2009.

\bibitem[Som90]{Som90}
M.~Somekawa, \emph{On {M}ilnor {$K$}-groups attached to semi-abelian
  varieties}, $K$-Theory \textbf{4} (1990), no.~2, 105--119.

\bibitem[Tat67]{Tat67}
J.~Tate, \emph{{$p$}-divisible groups}, Proc. {C}onf. {L}ocal {F}ields
  ({D}riebergen, 1966), Springer, Berlin, 1967, pp.~158--183.
  
\bibitem[Tat76]{Tat76}
\bysame, \emph{Relations between {$K\sb{2}$} and {G}alois cohomology}, Invent.
  Math. \textbf{36} (1976), 257--274.

%\bibitem[Tat97]{Tat97}
%\bysame, \emph{Finite flat group schemes}, Modular forms and {F}ermat's last
%  theorem ({B}oston, {MA}, 1995), Springer, New York, 1997, pp.~121--154.

\bibitem[Wei94]{Wei94}
C.~A. Weibel, \emph{An introduction to homological algebra}, Cambridge Studies
  in Advanced Mathematics, vol.~38, Cambridge University Press, Cambridge,
  1994.
  
\bibitem[Yam05]{Yam05}
T.~Yamazaki, \emph{On {C}how and {B}rauer groups of a product of {M}umford
  curves}, Math. Ann. \textbf{333} (2005), 549--567.

\bibitem[Yam09]{Yam09}
\bysame, \emph{Class field theory for a product of curves over a local field},
  Math. Z. \textbf{261} (2009), no.~1, 109--121.

\bibitem[Yos02]{Yos02}
T.~Yoshida, \emph{Abelian \'etale coverings of curves over local fields and its
  application to modular curves}, master thesis (2002).

\bibitem[Yos03]{Yos03}
\bysame, \emph{Finiteness theorems in the class field theory of varieties over
  local fields}, J. Number Theory \textbf{101} (2003), no.~1, 138--150.
  
\bibitem[YY20]{YY20}
T.~Yamazaki and Y.~Yang, \emph{Maximal abelian extension of {$X_0(p)$}
  unramified outside cusps}, Manuscripta Math. \textbf{162} (2020), no.~3-4,
  441--455.

\end{thebibliography}

\end{document}